\setlist[enumerate,1]{label=(\arabic*)}
\setlist[enumerate,2]{label=(\alph)}
\newtheorem{proposition}{Proposition}[section]
\newtheorem{lemma}[proposition]{Lemma}
\newtheorem{remark}[proposition]{Remark}
\newtheorem*{remark_nn}{Remark}
\newtheorem{theorem}[proposition]{Theorem}
\newtheorem{corollary}[proposition]{Corollary}
\newtheorem{definition}[proposition]{Definition}
\newtheorem*{lemma*}{Lemma}
\newtheorem{assumption}[proposition]{Assumption}
\newcommand{\ignore}[1]{}
\newcommand{\weakto}{\rightharpoonup}
\newcommand{\Chi}{\mathcal{X}}
\newcommand{\Ha}{\mathcal{H}}
\newcommand{\LL}{\mathcal{L}}
\newcommand{\eps}{\varepsilon}
\newcommand{\R}{\mathbb{R}}
\newcommand{\N}{\mathbb{N}}
\newcommand{\ssubset}{\subset\mathrel{\mkern-3mu}\subset}
\newcommand{\cmp}[1]{{#1}^{\mathsf{c}}}
\newcommand{\calH}{\mathcal{H}}
\newcommand{\interior}[1]{%
{\kern0pt#1}^{\mathrm{o}}%
}
\DeclareMathOperator{\dist}{dist}
\newcommand{\Hdist}{\dist_{\Ha}}
\numberwithin{equation}{section}
\definecolor{carminered}{rgb}{1.0, 0.0, 0.22}
\definecolor{islamicgreen}{rgb}{0.0, 0.56, 0.0}
\definecolor{blue(ryb)}{rgb}{0.01, 0.28, 1.0}
\begin{document}

\title{Qualitative properties of solutions to a non-local free boundary problem modeling cell polarization}
\author{Anna~Logioti\footnotemark[1] \quad Barbara~Niethammer\thanks{Institute for Applied Mathematics,
University of Bonn} \quad  Matthias~R\"oger\thanks{Mathematics faculty,
Technische Universit\"at Dortmund}  \quad Juan~J.~L.~Vel\'azquez\footnotemark[1]}
\maketitle

\begin{abstract}
We consider a parabolic non-local free boundary problem that has been derived as a limit of a bulk-surface reaction-diffusion system which models cell polarization. The authors have justified the well-posedness of this problem and have further proved uniqueness of solutions and global stability of steady states. In this paper we investigate qualitative properties of the free boundary.
We present necessary and sufficient conditions for the initial data that imply continuity of the support at $t=0$. If one of these assumptions fail, then jumps of the support take place. In addition we provide a complete characterization of the jumps for a large class of initial data.

\medskip

{\bf Keywords. } non-local free boundary problem, obstacle problem, estimates on the support of solutions

{\bf MSC Classification. } 35R35, 35R37, 35R70, 35Q92
\end{abstract}

\tableofcontents

\section{Introduction} \label{Intro}
Obstacle problems appear in various applications and are still an active field of current research. In this work we consider a particular parabolic obstacle problem on a surface in $\R^3$ that is motivated by a model for cell polarization and that enjoys a mass conservation property.
This property leads to some particular features and challenges of the model.

\medskip
The classical obstacle problem can be formulated as
\begin{align}
    &u\geq 0, \label{eq:obs-class1}\\
    &\partial_tu-\Delta u \geq f, \label{eq:obs-class2}\\
    &\partial_tu-\Delta u = f\quad\text{ in }\{u>0\}, \label{eq:obs-class3}
\end{align}
see for example \cite[Section 3.1]{RosO18}.
One of the distinct features of this kind of problems is the possibility of compactly supported solutions.
If $f$ is strictly negative it is well known that solutions remain compactly supported if the initial data have this property \cite{BF76,EK79}.
The characterization of the coincidence set $\{u=0\}$ and its (moving) free boundary is then crucial for the understanding of the problem.

\medskip
In \cite{BF76,EK79} it is shown that under suitable assumptions on the initial data the support of $u(\cdot,t)$ has distance at most of order $\sqrt{t}$ from the support of $u_0$.
Fine regularity properties of the (moving) free boundary have been obtained in the context of the classical one-phase Stefan problem in seminal work by Caffarelli \cite{Caff77}, see also the exposition in \cite[Section 2.9]{F82}, and have later been extended to the general case \cite{CPS04}.
We refer to \cite{CaFi13}, \cite{Fig18}, \cite{FiSe19}, \cite{RosO18} and the references therein for more recent developments and extensions to more general operators.

\bigskip
We are concerned with a parabolic obstacle problem that was obtained as an asymptotic reduction for a cell polarization model \cite{LNRV21} (see also \cite{NRV20} for the stationary case).

The spatial domain is now given by a two-dimensional manifold $\Gamma\subset\R^3$ without boundary.
For given $g:\Gamma\to (0,1)$ and initial data $u_0:\Gamma\to [0,\infty)$ we consider the system
\begin{align}
  &u \geq 0\;, \label{eq:oblam1}
  \\
  &\partial_t u -\Delta_\Gamma u \geq -1+\frac{g}{\lambda(t)}\;, \label{eq:oblam2}
  \\
  &\partial_t u -\Delta_\Gamma u = -1+\frac{g}{\lambda(t)}
  \quad\text{ in }\{u>0\}\;, \label{eq:oblam3}\\
  & \lambda(t) = \fint_{\{u(\cdot,t)>0\}}g\;,
  \label{eq:oblam4}
\end{align}
where $\Delta_\Gamma$ denotes the Laplace-Beltrami operator on $\Gamma$.
Due to \eqref{eq:oblam4} this system has the property that the total mass of $u$ is conserved, i.e.
\begin{equation}
  t\mapsto \int_\Gamma u(\cdot,t)\,dS \quad\text{ is constant. }
  \label{1eq:massconservation}
\end{equation}
The function $\lambda$ can be understood as the associated Lagrange multiplier.

The mass conservation property renders the problem \eqref{eq:oblam1}-\eqref{eq:oblam4} nonlocal, with a right hand side that depends on the {\em support} of the solution.
As we will see, this feature leads to a rather intricate behavior and makes the analysis quite challenging.

\medskip
Mass conservation is a very natural property in many applications. To the best of our knowledge it has hardly been considered in the context of obstacle problems.
One exception is \cite{AmNR17}, where a nonlinear conservation law is coupled to an obstacle condition and the constraint of mass conservation, which is much more difficult from a well-posedness point of view than a parabolic obstacle problem.
The authors prove existence of an entropy solution, but neither uniqueness of solutions nor regularity issues are considered.
In the context of parabolic obstacle problems of fourth order, mass conservation constraints have been considered for Cahn-Hilliard type problems with double obstacle potential or for a linear biharmonic heat equation with obstacle condition.
For the first application see for example \cite{BlEl91}, where existence and regularity of solutions have been discussed. The regularity of the boundary of the coincidence sets has not been considered.
The second application appears in a certain thin film limit, see \cite{BeHQ00}.
The analysis is restricted to self-similar structures and does not study general continuity properties of the moving boundary.

In view of the relevance of a mass conservation property and the apparent novelty of studying regularity properties in such a context, our analysis may be prototypical for a larger class of applications and opens interesting perspectives for future research in this direction.

\medskip
The model \eqref{eq:oblam1}-\eqref{eq:oblam4} has been derived in \cite{NRV20}, \cite{LNRV21} taking as a starting point a reaction-diffusion system that describes the concentrations of some chemicals inside the cell and on the membrane.
In particular, the variable $u$ in \eqref{eq:oblam1}-\eqref{eq:oblam4} represents the concentration of an activated protein on the cell membrane $\Gamma$ and the particular form of the Lagrange multiplier guaranteeing mass conservation is induced by the volume-surface coupling in the original reaction-diffusion system.

We adopt here the setting of the cell polarization model. On the other hand, it would also be meaningful to consider \eqref{eq:oblam1}-\eqref{eq:oblam4} in a domain $\Omega\subset\R^n$ subject to suitable boundary conditions.
We also mention two settings which lead to models analogue to \eqref{eq:oblam1}-\eqref{eq:oblam4}.
This is first the study of obstacle problems for which the amount of matter contained below an elastic membrane is fixed, and second --closer to our derivation-- a chemical system modeled by a reaction-diffusion system in which the diffusion coefficient associated to one of the species is much larger than the other (compare the shadow system reduction that is very common for two-variable reaction-diffusion systems in open domains and that has also been considered in the context of obstacle problems in \cite{Rodr02}).

\medskip
Whereas existence of solutions and convergence towards a unique stationary state have been discussed in \cite{LNRV21}, our focus here is on the characterization of qualitative properties of solutions, namely continuity properties of the Lagrange multiplier and of the (compact) support of the solutions.

We find that there are two conditions on the initial data $u_0\in C^0(\Gamma)$, $u_0\geq 0$ that play a crucial role in proving either continuity or jumps of the set $\{u(\cdot,t)>0\}$ as $t \to 0^{+}$.
The first nondegeneracy condition assumes
% that for some fixed $\theta>0$ it holds
\begin{equation}\label{eq:nondegeneracy1-lambda}
 1-\frac{g}{\lambda_0} >0 \quad\text{ in } \{u_0=0\} \;,
\end{equation}
where $\lambda_0:= \fint_{\{u_0>0\}} g\,dS$.
We impose a second nondegeneracy condition in form of a mild regularity assumption
\begin{equation}\label{eq:nondegeneracy2}
  \calH^2\big(\partial \{u_0>0\}\big) =0\;.
\end{equation}

Our first main result concerns the right-continuity of the support at time $t=0$ and can be stated as follows (see Section \ref{Continuity Section} for a precise formulation).

\begin{theorem}\label{1thm:main1}
Suppose that \eqref{eq:nondegeneracy1-lambda} and \eqref{eq:nondegeneracy2} hold true.
Then
\begin{equation*}
	\lim_{t\searrow 0}\lambda(t)=\lambda_0 \quad\text{ and }\quad
  \lim_{t\searrow 0}\{u(\cdot,t)>0 \}= \{u_0>0\}\;,
\end{equation*}
where the convergence of supports holds both in the sense of $L^1(\Gamma)$-convergence of the associated characteristic functions and in the sense of Hausdorff convergence.
\end{theorem}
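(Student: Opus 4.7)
The plan is to establish both limits simultaneously via a bootstrap: a lower inclusion on $\{u(\cdot,t)>0\}$ coming from continuity of $u$, an upper inclusion extracted from nondegeneracy~\eqref{eq:nondegeneracy1-lambda} through a Bertsch--Friedman / Evans--Knerr type barrier argument \cite{BF76,EK79}, and a coupling through $\lambda(t)$ that is closed using~\eqref{eq:nondegeneracy2}.

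\textbf{Lower inclusion.} Since $u\in C^0(\Gamma\times[0,T])$ (from \cite{LNRV21}), for every $x\in\{u_0>0\}$ we have $u(x,\cdot)>0$ on an initial interval. Fatou's lemma then yields
\begin{equation*}
|\{u_0>0\}|\ \le\ \liminf_{t\searrow 0}|\{u(\cdot,t)>0\}|,\qquad
\int_{\{u_0>0\}} g\,dS\ \le\ \liminf_{t\searrow 0}\int_{\{u(\cdot,t)>0\}} g\,dS.
\end{equation*}
A covering argument applied to the super-level sets $\{u_0\ge c\}$, combined with uniform continuity of $u$, also gives the one-sided Hausdorff estimate $\sup_{x\in\overline{\{u_0>0\}}}\dist(x,\overline{\{u(\cdot,t)>0\}})\to 0$.

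\textbf{Upper inclusion via barrier.} By~\eqref{eq:nondegeneracy1-lambda}, continuity of $g$ and compactness of $\Gamma$, there is $\delta_0>0$ with $g\le(1-2\delta_0)\lambda_0$ on $\{u_0=0\}$. Introduce the stopping time
\begin{equation*}
T_{\delta_0}:=\sup\{t>0\;:\;\lambda(s)\ge(1-\delta_0)\lambda_0\text{ for all }s\in[0,t]\}.
\end{equation*}
For $t\in[0,T_{\delta_0}]$ the forcing $-1+g/\lambda(\cdot)$ is bounded above on $\{u_0=0\}$ by the strictly negative constant $-\delta_0/(1-\delta_0)$, which is precisely the hypothesis of the classical spreading estimate of \cite{BF76,EK79}. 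Working in local charts that render $\Delta_\Gamma$ uniformly elliptic and comparing $u$ with explicit Gaussian-type super-solutions of the obstacle problem vanishing initially away from $\overline{\{u_0>0\}}$, one obtains
\begin{equation*}
\{u(\cdot,t)>0\}\ \subseteq\ \{x\in\Gamma\;:\;\dist(x,\overline{\{u_0>0\}})<C\sqrt{t}\}\qquad\text{for all }t\in[0,T_{\delta_0}].
\end{equation*}

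\textbf{Closing the loop.} By~\eqref{eq:nondegeneracy2}, the $C\sqrt{t}$-neighborhood of $\overline{\{u_0>0\}}$ has measure $|\{u_0>0\}|+o(1)$ as $t\to 0$. Combining this with the lower inclusion sandwiches both $|\{u(\cdot,t)>0\}|$ and $\int_{\{u(\cdot,t)>0\}} g\,dS$ between their $t=0$ values and those plus $o(1)$ on $[0,T_{\delta_0}]$, forcing $L^1$-convergence of the associated characteristic functions and $\lambda(t)\to\lambda_0$ within this interval. Since $\lambda(0)=\lambda_0$ and the sandwich provides continuous control of $\lambda$ from below whenever the bootstrap hypothesis holds, the set of times where the hypothesis fails cannot accumulate at $0$, so $T_{\delta_0}>0$. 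Hausdorff convergence then follows by combining the inclusions from the first two steps. The central difficulty is the circularity of this bootstrap: the barrier estimate requires $\lambda(t)\ge(1-\delta_0)\lambda_0$, yet this lower bound on $\lambda$ itself requires the support to have stayed close to $\{u_0>0\}$, which is exactly what the barrier delivers. Making the bootstrap rigorous without circular reasoning is where both nondegeneracy hypotheses intervene jointly: \eqref{eq:nondegeneracy1-lambda} supplies the strictly negative forcing needed by the barrier, while \eqref{eq:nondegeneracy2} prevents the $\sqrt{t}$-thickening of $\overline{\{u_0>0\}}$ from picking up enough extra measure to let $\lambda$ dip past $(1-\delta_0)\lambda_0$. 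Managing this interplay is, I expect, the most technical piece of the proof.
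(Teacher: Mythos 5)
Your outline correctly identifies the two ingredients (an Evans--Knerr type barrier estimate in a region where $\lambda$ is bounded below, and a bootstrap to sustain that lower bound), and you also correctly identify the crux: the bootstrap is circular as stated. But the ``closing the loop'' step as you have written it does not work, and this is not a matter of polishing -- it is the genuine difficulty that the paper's argument is structured around.

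The problem is that the stopping time $T_{\delta_0}:=\sup\{t>0:\lambda(s)\ge(1-\delta_0)\lambda_0\text{ for all }s\in[0,t]\}$ is not known to be positive. You argue ``since $\lambda(0)=\lambda_0$ and the sandwich provides continuous control of $\lambda$ from below whenever the bootstrap hypothesis holds, the set of times where the hypothesis fails cannot accumulate at $0$.'' This presupposes exactly the one-sided continuity of $\lambda$ at $t=0$ that you are trying to prove. The only regularity available a priori is $u\in C^0([0,T];L^2(\Gamma))$, and the map $v\mapsto|\{v>0\}|$ is not $L^2$-continuous, so $\lambda$ (equivalently $\alpha$) may in principle jump instantaneously -- indeed Theorem \ref{main.p3} shows it \emph{does} jump when \eqref{eq:nondegeneracy1-lambda} fails, which confirms there is no cheap a priori continuity to invoke. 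If $\lambda(s_n)<(1-\delta_0)\lambda_0$ along some $s_n\searrow 0$, then $T_{\delta_0}=0$ and the barrier argument never gets off the ground.

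The paper breaks the circularity by regularizing: it replaces $H(u)$ by the Michaelis--Menten nonlinearity $f_\eps(u)=u/(u+\eps)$, so that the Lagrange-multiplier analogue $\alpha_\eps(t)=\int(1-g)f_\eps(u_\eps(\cdot,t))/\int g$ depends continuously on $u_\eps(\cdot,t)$, and hence $\alpha_\eps$ is genuinely continuous in $t$ (Hölder, in fact). With that continuity in hand, a stopping-time bootstrap analogous to yours can be carried out for $\alpha_\eps$ (Proposition \ref{P.main1}), but the real work is showing the stopping time is bounded below \emph{uniformly in $\eps$}; this relies on a carefully constructed initial datum $u_0^\eps$ (chosen so that the regularized RHS vanishes at $t=0$ on $\{u_0=0\}$, giving $\alpha_\eps(0)\to\alpha_0$), on a scaled nondegeneracy lemma (Lemma \ref{L.usmall}) replacing your Gaussian barrier, and on a delicate pointwise comparison giving $O(\eps)$-accurate bounds for $f_\eps(u_\eps)$ in the interior of $\{u_0=0\}$ (Lemma \ref{L.alphepsapprox}). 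Only after establishing an $\eps$-independent lower bound on the stopping time can one pass to the limit $\eps\to 0$ and obtain the inclusions \eqref{inclusions1} for $u$ itself, from which $L^1$ and Hausdorff convergence and $\lambda(t)\to\lambda_0$ follow. So the barrier-plus-bootstrap heuristic is right, but the essential mechanism -- restoring continuity of the Lagrange multiplier through the $\eps$-regularization so the bootstrap has something to grip -- is missing from your proposal.
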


Let us point out that {\em a priori} we cannot apply the growth bounds from \cite{EK79} for solutions of the classical obstacle problem \eqref{eq:obs-class1}-\eqref{eq:obs-class3}.
Under just a boundedness and global nondegeneracy condition on the right-hand side $f$ it is proved in \cite[Theorem 3.2]{EK79} that the distance of the support of $u(\cdot,t)$ to the support of the initial data grows at most with rate $\sqrt{t|\log t|}$.
However, by the peculiar dependence of the right-hand side in \eqref{eq:oblam1}-\eqref{eq:oblam4} on the support of $u$ the nondegeneracy condition \eqref{eq:nondegeneracy1-lambda} does not necessarily propagate to positive times.
To show such a propagation property of the nondegeneracy condition is therefore a key step in our proof of Theorem \ref{1thm:main1}.
{\em A posteriori} \cite[Theorem 3.2]{EK79} can eventually be applied and even yields a growth rate of the support, see Corollary \ref{cor:growthbound} below.

\medskip
Our second main theorem shows that the support may experience an initial jump if \eqref{eq:nondegeneracy1-lambda} is violated.
% Note that the latter condition is equivalent to $g<\lambda_0$ in $\{u_0=0\}$.
In order to avoid too many technicalities, in this introduction we give a slightly weaker statement and put an additional assumption, see Section \ref{Jump Section} below for a more general statement.

\begin{theorem}\label{1thm:main2}
Suppose that \eqref{eq:nondegeneracy2} holds but that
\begin{equation*}
  |\{u_0=0\} \cap \{g> \lambda_0 \}| >0\;.
\end{equation*}
Then there exists
\begin{equation*}
  \Lambda[u_0]:=\lim_{t\searrow 0}\lambda(t)\quad\text{ and satisfies }\quad \Lambda[u_0]>\lambda_0.
\end{equation*}
Asssume in addition  $|\{g=\Lambda[u_0]\}|=0$.
Then also $A_*^0:=\lim_{t\searrow 0}\{u(\cdot,t)>0\}$ exists (convergence in $L^1$ and Hausdorff distance sense as above) and satisfies
\begin{equation*}
  |A_*^0\setminus\{u_0>0\}|>0.
\end{equation*}
Furthermore, $\Lambda[u_0]$ and $A_*^0$ are characterized by a variational principle.
\end{theorem}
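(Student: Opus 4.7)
The idea is to show that every subsequential limit $(\Lambda^*, A^*)$ of $(\lambda(t_n), \{u(\cdot,t_n) > 0\})$ along $t_n \searrow 0$ solves
\[
  \Lambda^* = \fint_{A^*} g \, dS, \qquad \{u_0 > 0\} \cup \{g > \Lambda^*\} \;\subseteq\; A^* \;\subseteq\; \{u_0 > 0\} \cup \{g \geq \Lambda^*\}
\]
(inclusions up to $\calH^2$-null sets), and then that the hypothesis $|\{g = \Lambda[u_0]\}| = 0$ renders this system uniquely solvable, so that subsequential limits are in fact full limits. Equivalently, $\Lambda^*$ must be a zero of $F(\Lambda) := \fint_{\{u_0>0\}\cup\{g>\Lambda\}} g - \Lambda$, and the strict inequality $\Lambda[u_0]>\lambda_0$ will fall out from $F(\lambda_0)>0$.

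\textbf{Compactness and easy direction.} Picking an arbitrary sequence $t_n \searrow 0$ and using $\lambda(t) \in [\min g, \max g]$ together with boundedness of the indicators $\chi_{\{u(\cdot,t)>0\}}$ in $L^\infty(\Gamma)$, I extract a subsequence along which $\lambda(t_n) \to \Lambda^*$ and $\chi_{\{u(\cdot,t_n)>0\}} \to \chi_{A^*}$ in $L^1(\Gamma)$; passing to the limit in $\lambda(t_n) = \fint_{\{u(\cdot,t_n)>0\}} g$ then yields $\Lambda^* = \fint_{A^*} g$. Space-time continuity of $u$ (from \cite{LNRV21}) immediately gives $\{u_0 > 0\} \subseteq A^*$ up to null, so $\Lambda^* \geq \lambda_0$.

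\textbf{The two inclusions.} For $\{g > \Lambda^*\} \setminus \{u_0 > 0\} \subseteq A^*$, I fix a compact $K \subset (\{u_0 = 0\})^\circ \cap \{g \geq \Lambda^* + \delta\}$; on a fixed neighborhood of $K$ the source $-1 + g/\lambda(t_n)$ is bounded below by a positive constant $c_\delta$ for all large $n$, and a local parabolic comparison with the solution of $\partial_t v - \Delta_\Gamma v = c_\delta$, $v(\cdot,0) = 0$, on a small patch contained in $\{u_0 = 0\}^\circ$ forces $u(\cdot,t_n) > 0$ on $K$; by \eqref{eq:nondegeneracy2} such $K$ exhaust $\{g > \Lambda^*\} \setminus \{u_0 > 0\}$ in measure. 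Conversely, for $A^* \setminus \{u_0 > 0\} \subseteq \{g \geq \Lambda^*\}$, near any interior point of $\{u_0 = 0\} \cap \{g \leq \Lambda^* - \delta\}$ the source is bounded above by a negative constant, and a waiting-time argument in the spirit of \cite[Thm.~3.2]{EK79} yields $u \equiv 0$ there for a short time, so the point does not belong to $A^*$; \eqref{eq:nondegeneracy2} again handles the $\calH^2$-negligible boundary $\partial\{u_0 > 0\}$. Combined with $|\{g = \Lambda^*\}| = 0$, the two inclusions identify $A^*$ with $\{u_0>0\} \cup \{g > \Lambda^*\}$ up to null, hence $F(\Lambda^*) = 0$. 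A direct computation shows that the derivative of $\Lambda \mapsto \fint_{A_\Lambda} g$ vanishes at any zero of $F$, leaving $F'(\Lambda^*) = -1$; so $F$ crosses zero downward at every zero and the zero is unique, and subsequential limits are full limits. The positivity $F(\lambda_0)>0$ (which follows from $|\{u_0=0\}\cap\{g>\lambda_0\}|>0$) forces $\Lambda[u_0]>\lambda_0$ and $|A_*^0\setminus\{u_0>0\}|>0$. Hausdorff convergence of supports is then upgraded from $L^1$-convergence using continuity of $u$ up to $t = 0^+$ on $\overline{A_*^0}$ together with \eqref{eq:nondegeneracy2} to exclude thin filaments.

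\textbf{Main obstacle.} The principal difficulty is the upper inclusion together with the Hausdorff-convergence upgrade: one must propagate the waiting-time/interior-vanishing property of the classical parabolic obstacle problem to this non-local setting---where the source $-1 + g/\lambda(t)$ itself moves with the unknown support through $\lambda$---and do so uniformly along arbitrary subsequences $t_n \searrow 0$, for which an oscillation estimate on $\lambda$ near $0$ will be required. Assumption \eqref{eq:nondegeneracy2} does much of the heavy lifting in passing between the measure-theoretic interior and closure of $\{u_0 > 0\}$.
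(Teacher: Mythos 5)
Your plan correctly identifies the variational characterization of the limit (your $F(\Lambda)=0$ is equivalent to the paper's $\Lambda[u_0]=\sup\{\fint_A g : A\supset\{u_0>0\}\}$), and the subsequential-compactness setup is sound. However, there is a genuine gap that you yourself flag in the ``Main obstacle'' paragraph but never close, and it is not a technicality: both of your inclusion arguments (the comparison with $\partial_t v-\Delta_\Gamma v=c_\delta$ for the lower inclusion, and the waiting-time argument for the upper one) run on a space-time cylinder $K\times[0,t_n]$ and therefore need the source $-1+g/\lambda(t)$ to have the required sign \emph{for every} $t\in(0,t_n]$, not merely at the sequence points $t_n$. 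Your only information is a subsequential limit $\lambda(t_n)\to\Lambda^*$; \emph{a priori} $\lambda$ could oscillate arbitrarily between the $t_n$, invalidating both comparisons. The ``oscillation estimate on $\lambda$ near $0$'' you say ``will be required'' is exactly the missing ingredient, and nothing in your proposal supplies it.

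The paper closes this gap by two ideas that are absent from your sketch. First, the \emph{deterministic upper bound} $\lambda(t)\le\Lambda_{\{u(\cdot,t)>0\}}$ (since $\{u(\cdot,t)>0\}$ is admissible in the variational problem; Corollary~\ref{Lambda properties2}), combined with the crude subsolution $S(t)u_0-t\le u(\cdot,t)$ giving $\{u_0>r\}\subset\{u(\cdot,t)>0\}$, and the monotonicity of $S\mapsto\Lambda_S$. This yields $\lambda(t)\le\Lambda_{\{u_0>r\}}$ for \emph{all} small $t$, hence $\limsup_{t\searrow 0}\lambda(t)\le\Lambda[u_0]$ with no oscillation worry. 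Second, and crucially for the upper inclusion and the matching $\liminf$ bound, the paper does not attempt a waiting-time argument at all: it dominates $u$ from above by solutions $u_n$ whose initial data $u^0_n$ (Lemma~\ref{eta data regular}) are built to satisfy both nondegeneracy conditions \eqref{eq:nondegeneracy1-lambda} and \eqref{eq:nondegeneracy2}, so that the \emph{continuity} Theorem~\ref{3thm:main1} applies to each $u_n$. The $L^1$-contraction \eqref{contraction} gives $u\le u_n$, hence $\{u(\cdot,t)>0\}\subset\{u_n(\cdot,t)>0\}$, and the latter set does not jump. This monotone squeezing replaces the local PDE comparisons entirely and is what makes the argument uniform in $t$. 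If you want to rescue your route, the first idea (the inequality $\lambda(t)\le\Lambda_{\{u(\cdot,t)>0\}}$ plus the heat-semigroup subsolution) would give you the needed one-sided bound on $\lambda$ over the whole interval; for the opposite inclusion you would still need something like the paper's comparison-from-above device, because a pointwise waiting-time argument never controls $\lambda$ itself.

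As a smaller remark, your uniqueness step (``$F'(\Lambda^*)=-1$ at any zero'') requires differentiating $\Lambda\mapsto\fint_{\{u_0>0\}\cup\{g>\Lambda\}}g$, which implicitly needs a coarea/regular-value structure for $g$ near $\Lambda^*$; the paper avoids this by proving directly that the maximizer of the variational problem is $A^0_*$ and is unique up to null sets when $|\{g=\Lambda[u_0]\}|=0$ (Lemma~\ref{well-posed}), which is more elementary and does not appeal to differentiability in $\Lambda$.
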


See Theorem \ref{main.p3} and Corollary \ref{cor:expl2} in Section \ref{Jump Section} for more general results and for a precise formulation of the variational principle.

\medskip
Let us discuss the assumptions \eqref{eq:nondegeneracy1-lambda} and \eqref{eq:nondegeneracy2}.
The first condition is clearly related to the condition $f\leq -\nu<0$ for the classical obstacle problem that has been present in all the regularity results stated above and that also appears as a stability condition for the free boundary
% and estimates on the symmetric difference of the support of different solutions
(see \cite{Caff81} and the exposition in \cite[Chapter 6]{Ro87}).

% Regarding the problem \eqref{eq:oblam1}-\eqref{eq:oblam4}, if $g \geq \lambda_0+\theta$, it follows from the strong
% maximum principle that $u(\cdot,t)$ becomes strictly positive for small positive times.
% Then, the interface $\partial \{u(\cdot,t)>0\}$ experiences a jump and the same is true for $\alpha$.

Note however that the right-hand side in \eqref{eq:oblam2} does not have a sign, since its integral over the support of $u$ vanishes.

\medskip
The second nondegeneracy condition \eqref{eq:nondegeneracy2} seems not to be required for problem \eqref{eq:obs-class1}-\eqref{eq:obs-class2} but appears to be quite significant for problem \eqref{eq:oblam1}-\eqref{eq:oblam4}.
In fact, in a forthcoming paper \cite{LNRVloading} we provide an example of initial data $u_0$ such that \eqref{eq:nondegeneracy1-lambda} holds while \eqref{eq:nondegeneracy2} is not satisfied.
We prove that in this case the function $\lambda$ can not be continuous at $t=0$.
The proof is rather involved and lengthy, we therefore only refer for the details to \cite{LNRVloading}.

The necessity  of this second condition in our analysis is a consequence of the particular structure of the right-hand side in \eqref{eq:oblam1}-\eqref{eq:oblam4} and its dependence on the positivity set $\{u>0\}$ through the nonlocal functional $\lambda$, whereas $f$ in \eqref{eq:obs-class1}-\eqref{eq:obs-class2} is a general function of space and time.

It is known (see for example \cite{CPS04}) that for solutions of the classical obstacle problem the boundary of the coincidence set, $\partial\{u=0\}$, has vanishing $\LL^{n+1}$-dimensional Lebesgue measure.
This implies that for almost all $t>0$ the condition \eqref{eq:nondegeneracy2} is satisfied.
However, even for the classical case it is not excluded that this condition is violated at particular times (and in particular at initial time).
Since the results in \cite{CPS04} require the assumption $f\leq -\theta<0$ for some constant $\theta>0$ (in \cite{CPS04} only $f\equiv -1$ is considered), and since the nondegeneracy condition \eqref{eq:nondegeneracy1-lambda} does not propagate to positive times the situation for \eqref{eq:oblam1}-\eqref{eq:oblam4} is even worse.

\medskip
Corresponding results to Theorems \ref{1thm:main1}, \ref{1thm:main2} can also be shown for the classical obstacle problem and seem to be new in this context.
In particular, in Theorem \ref{StefanJump} we indicate that a region where the nondegeneracy condition is initially violated becomes part of the support instantaneously.

\medskip
The results of the present paper and the arguments used in the proof may be extended to similar situations.
Key properties that are used are the continuity of the right-hand side in \eqref{eq:oblam2}, \eqref{eq:oblam3} with respect to $L^1$-convergence of the characteristic function of the support of $u$, an $L^1$-contraction property (see \eqref{contraction} below).
Beyond these general properties, however, we also exploit the particular structure, such as the appearance of a Lagrange multiplier on the right-hand side that can be expressed as a nonlocal function of the solution.
It is therefore not straightforward to formulate our result for a general class of right-hand sides.

\bigskip
The plan of this paper is the following.
First, we collect some results from previous work and reformulations of problem \eqref{eq:oblam1}-\eqref{eq:oblam4}.
In Section \ref{Continuity Section} we prove the continuity properties stated in Theorem \ref{1thm:main1}.

Discontinuity results are derived in Section \ref{Jump Section} where we in particular prove the properties stated in Theorem \ref{1thm:main2}.

Finally, in Section \ref{sec:classical} we show that the techniques developed in the current paper also yield new results for the classical parabolic obstacle problem.

\section{Previous results and preliminaries} \label{Tools}

\subsection{Assumptions and notations}
Let us state the main assumptions that we impose throughout this paper.

\begin{assumption}\label{ass:main}
Let $\Gamma\subset\R^3$ be a smooth compact surface without boundary and let $T>0$. We set  $\Gamma_T:=\Gamma \times (0,T)$.
For subsets $A\subset\Gamma$ we denote by $|A|=\calH^2(A)$ its Hausdorff measure and by $\Chi_A$ the standard characteristic function of the set $A$.

For $x_0\in\Gamma$ and $\rho>0$ we denote by $B_{\rho}(x_0)$ the ball on the surface $\Gamma$ with respect to the extrinsic (Euclidean) distance in $\R^3$.
We remark that the assumptions on $\Gamma$ imply that the intrinsic (geodesic) and the extrinsic distances induce equivalent metrics.
The Hausdorff distance between sets is denoted by $\Hdist$.

By $\Delta$ we denote the Laplace-Beltrami operator on $\Gamma$, see also the remark below.

\medskip
We assume that
\begin{equation}\label{initialdata}
  u_0\in C^2(\Gamma)\; \quad \text{with }
  \; u_0 \geq 0 \quad \text{and}
  \quad |\{u_0>0\}|>0,
\end{equation}
as well as
\begin{equation}\label{gassumptions}
  g \in C^2(\Gamma)
  \quad \text{ and } \quad 0< g_0\leq g \leq g_1 <1 \;\text{ on } \Gamma\,
\end{equation}
for some $0<g_0<g_1<1$.
We notice that in some cases, assuming only continuity for the function $g$ in the following analysis would be sufficient.
However, \eqref{gassumptions} simplifies the computations.
\end{assumption}

\begin{remark}\label{LB explained}
We recall that the relevant diffusion operator on $\Gamma$ is the corresponding Laplace-Beltrami operator, see for example	\cite{PS16}.
In local coordinates the Laplace-Beltrami operator corresponds to an elliptic operator in divergence form (with $C^2$-regular coefficients in our case).
One can deduce parabolic maximum principles in analogy to \cite[Chapter $2$]{F64} for evolution problems on $\Gamma$ involving the Laplace-Beltrami operator.
\end{remark}

\medskip
Regarding the second nondegeneracy condition, we prove in Appendix \ref{A2}, Lemma \ref{non-fat} that \eqref{eq:nondegeneracy2} is equivalent to
\begin{equation}\label{eq:nondegeneracy2a}
  \calH^2\big( \big(\{u_0>0\}\big)_{+\delta} \setminus \big(\{u_0>0\}\big)_{-\delta} \big)  \to 0   \quad \text{as } \delta \to 0\;,
\end{equation}
where
\begin{equation*}
  \big ( \{u_0>0\} \big)_{+\delta}:=\{x\;| d(x,\big ( \{u_0>0\} \big)) \leq \delta \}\;, \quad \big ( \{u_0>0\} \big)_{-\delta}:=\{x\;| d(x,\big ( \{u_0=0\} \big)) \geq \delta \}\;. \label{delta sets in intro}
\end{equation*}
In our analysis below mainly the formulation \eqref{eq:nondegeneracy2a} is used.

\subsection{Reformulations and previous results}
Before proceeding to the main analysis of this paper, we collect here some results from our previous work in \cite{LNRV21}.
In that paper and in Section \ref{Continuity Section} and Section \ref{Jump Section} below we will use the following reformulations of the problem \eqref{eq:oblam1}-\eqref{eq:oblam4}.

We define $H:\R\to\{0,1\}$, $H=\Chi_{(0,\infty)}$ as the characteristic function of the positive real numbers.

\begin{lemma}\label{2lem:equival}
Let $u_0,g$ be given as in Assumption \ref{ass:main} and consider
\begin{align*}
  u&\in L^2\big(0,T;H^1(\Gamma)\big)\cap H^1\big(0,T;H^1(\Gamma)^*\big),\\
  u&\in L^p\big(\delta,T;W^{2,p}(\Gamma)\big)\cap W^{1,p}\big(\delta,T;L^p(\Gamma)\big)\quad\text{ for any }\delta>0,\,1\leq p<\infty.
\end{align*}
Then the following are equivalent:
\begin{enumerate}
  \item\label{it:oblam} $u$ is a solution of \eqref{eq:oblam1}-\eqref{eq:oblam4} with initial data $u_0$.
  \item\label{it:oblamalt} $u$ is a solution of
  \begin{align}
    u &\geq 0 \quad &\text{ almost everywhere on }\;\Gamma_T\;, \label{eq:oblamalt2}\\
    \partial_t u -\Delta u &=-\bigg(1-\frac{g}{\lambda} \bigg )H(u) \; \quad &\text{ almost everywhere on }\;\Gamma_T\;,
    \label{eq:oblamalt1}\\
    g &\leq\lambda \quad&\text{ almost everywhere in }\{u=0\}\;, \label{eq:oblamalt3}\\
    & \lambda(t) = \fint_{\{u(\cdot,t)>0\}}g
    &\text{ for almost every }t\in (0,T)\;,
    \label{eq:oblamalt1a}\\
    u(\cdot,0) &=u_0 \quad&\text{ almost everywhere in }\Gamma\;.
    \label{eq:oblamalt4}
  \end{align}
  \item\label{it:obalph} There exists $\xi\in L^\infty(\Gamma_T)$ such that $(u,\xi)$ is a solution of
  \begin{align}
  	&\partial_t u -\Delta u  =-(1-g)\xi + \alpha g
  	&\text{ almost everywhere on } \Gamma_T\,,
  	\label{eq:obalph1}\\
    &\alpha(t)=\frac{\int_{\{u(\cdot,t)>0\}} (1-g)\,dS}{\int_{\{u(\cdot,t)>0\}} g\,dS}
    &\text{ for almost every }t\in (0,T)\;, \label{eq:obalph3}\\
  	&u\geq 0,\,  \quad  u\xi = u \, ,\quad
  	0\leq \xi\leq 1  &\text{ almost everywhere  on } \Gamma_T\,,\label{eq:obalph2}\\
    &u(\cdot,0) =u_0 &\quad\text{ almost everywhere in }\Gamma\;.
    \label{eq:obalph4}
  \end{align}
\end{enumerate}
\end{lemma}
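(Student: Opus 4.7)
The plan is to establish $(1)\Leftrightarrow(2)\Leftrightarrow(3)$ by reading all formulations as a.e.\ pointwise identities on $\Gamma_T$, which is legitimate in view of the assumed $W^{2,p}$-regularity of $u$ in space-time. A central algebraic observation I would record first is the identity
\begin{equation*}
  \alpha(t) \;=\; \frac{|\{u(\cdot,t)>0\}|}{\int_{\{u(\cdot,t)>0\}} g\,dS} - 1 \;=\; \frac{1}{\lambda(t)}-1,
\end{equation*}
obtained by splitting $\int(1-g)=|\{u(\cdot,t)>0\}|-\int g$. In particular $\lambda(1+\alpha)=1$, and this is the bridge that makes the right-hand sides in \eqref{eq:oblam3} and \eqref{eq:obalph1} compatible. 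Both $\alpha$ and $\lambda$ are finite and strictly positive because $0<g_0\leq g\leq g_1<1$ and $|\{u(\cdot,t)>0\}|>0$ for a.e.\ $t$.

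For $(1)\Leftrightarrow(2)$, the key ingredient is the Stampacchia-type fact that a nonnegative $W^{2,p}_{\loc}$-function satisfies $\nabla u=0$, $\Delta u=0$ and $\partial_t u=0$ a.e.\ on $\{u=0\}$, hence $\partial_t u-\Delta u=0$ a.e.\ there. Granting this, assuming (1), combining \eqref{eq:oblam3} on $\{u>0\}$ with the a.e.\ vanishing of the parabolic operator on $\{u=0\}$ yields the pointwise identity \eqref{eq:oblamalt1}. Conversely, given \eqref{eq:oblamalt1}, restriction to $\{u>0\}$ recovers \eqref{eq:oblam3}, while on $\{u=0\}$ the same vanishing forces $-(1-g/\lambda)H(u)=0$ automatically, so \eqref{eq:oblam2} reduces to the pointwise bound $g\leq\lambda$ asserted by \eqref{eq:oblamalt3}. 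The nonlocal definition of $\lambda$ and the initial condition transfer verbatim.

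For $(2)\Leftrightarrow(3)$, I would use the explicit substitution
\begin{equation*}
  \xi \;:=\; \Chi_{\{u>0\}} \;+\; \frac{\alpha g}{1-g}\,\Chi_{\{u=0\}}
\end{equation*}
when going from (2) to (3). On $\{u>0\}$ this gives $\xi=1$ (so $u\xi=u$), and using $1+\alpha=1/\lambda$ a direct computation produces $-(1-g)\xi+\alpha g=-1+g/\lambda$, matching \eqref{eq:oblamalt1}. On $\{u=0\}$ the choice is designed to make $-(1-g)\xi+\alpha g$ vanish, in agreement with $\partial_t u-\Delta u=0$ there; the bound $\xi\leq 1$ reduces exactly to $g\leq\lambda$, i.e.\ \eqref{eq:oblamalt3}, and $\xi\geq 0$ follows from $\alpha,g,1-g>0$. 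In the reverse direction, the constraint $u\xi=u$ with $0\leq\xi\leq 1$ forces $\xi=1$ a.e.\ on $\{u>0\}$, yielding \eqref{eq:oblamalt1} through the same calculation; on $\{u=0\}$ the PDE combined with $\partial_t u-\Delta u=0$ forces $\xi=\alpha g/(1-g)$, and $\xi\leq 1$ translates back into $g\leq\lambda$.

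The main technical subtlety I anticipate is justifying the a.e.\ identity $\partial_t u-\Delta u=0$ on $\{u=0\}$ at the regularity stated near $t=0$. Since (1)--(3) are formulated as a.e.\ identities on $\Gamma_T$, it is enough to argue on $\Gamma\times(\delta,T)$ using the $L^p(\delta,T;W^{2,p}(\Gamma))\cap W^{1,p}(\delta,T;L^p(\Gamma))$-regularity and then let $\delta\downarrow 0$, the initial conditions \eqref{eq:oblamalt4} and \eqref{eq:obalph4} being handled separately through the trace afforded by $u\in L^2(0,T;H^1(\Gamma))\cap H^1(0,T;H^1(\Gamma)^*)$.
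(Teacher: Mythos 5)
Your proposal is correct and follows essentially the same route as the paper: the algebraic identity $\alpha = 1/\lambda - 1$, Stampacchia's lemma giving $\partial_t u - \Delta u = 0$ a.e.\ on $\{u=0\}$, and the explicit construction $\xi = \Chi_{\{u>0\}} + \frac{\alpha g}{1-g}\Chi_{\{u=0\}}$ are precisely the ingredients used there. The only cosmetic difference is that the paper closes the cycle via $(1)\Rightarrow(2)\Rightarrow(3)\Rightarrow(1)$ while you argue both directions of $(2)\Leftrightarrow(3)$ directly, which is equivalent.
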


\begin{proof}
\ref{it:oblam}$\implies$\ref{it:oblamalt}:
From \eqref{eq:oblam3} we deduce that \eqref{eq:oblamalt1} holds in $\{u>0\}$.
By the regularity assumption on $u$ and Stampacchias Lemma, \eqref{eq:oblamalt1} holds also in $\{u=0\}$, and \eqref{eq:oblam2} yields that almost everywhere in $\{u=0\}$ the inequality \eqref{eq:oblamalt3} is satisfied.

\medskip
\ref{it:oblamalt}$\implies$\ref{it:obalph}:
We define $\alpha$ by \eqref{eq:obalph3} and observe that \eqref{eq:oblamalt1a} yields
\begin{equation}
  \alpha = \frac{1}{\lambda}-1\;. \label{alphaAverage}
\end{equation}
We set
\begin{equation*}
  \xi :=
  \begin{cases}
    1 \quad&\text{ in }\{u>0\},\\
    \frac{\alpha g}{1-g} \quad&\text{ in }\{u=0\}.
  \end{cases}
\end{equation*}
Together with \eqref{eq:oblamalt1} and\eqref{alphaAverage} this implies \eqref{eq:obalph1}.

The first inequality in \eqref{eq:obalph2} holds by \eqref{eq:oblamalt2}, the second by $\xi=1$ in $\{u>0\}$ and the third since \eqref{eq:oblamalt3} yields $\alpha g \leq 1-g$ in $\{u=0\}$.

\medskip
\ref{it:obalph}$\implies$\ref{it:oblam}:
From the first inequality in \eqref{eq:obalph2} we have \eqref{eq:oblam1}.
We define $\lambda$ by \eqref{eq:oblam4}, then \eqref{eq:obalph3} yields the relation \eqref{alphaAverage}.
The inequality $\xi\leq 1$ in \eqref{eq:obalph2} implies \eqref{eq:oblam2} and the second property in \eqref{eq:obalph2} yields $\xi=1$ in $\{u>0\}$ and hence by \eqref{eq:obalph1} and \eqref{alphaAverage} that \eqref{eq:oblam3} holds.
\end{proof}

In \cite{LNRV21} we have established that for any $u_0,g$ as in Assumption \ref{ass:main} there exists a unique solution $u,\xi$ of \eqref{eq:obalph1}-\eqref{eq:obalph4} and therefore by Lemma \ref{2lem:equival} a unique solution $u$ of \eqref{eq:oblam1}-\eqref{eq:oblam4}.

By embedding theorems we have $u\in C^0([0,T];L^2(\Gamma))$ and $u\in C^{1+\beta,\frac{1+\beta}{2}}(\Gamma\times [\delta,T])$ for all $0<\beta<1$.

Furthermore, in \cite[Theorem 3.1]{LNRV21} it is shown that for any two solutions $u_1, u_2$ of \eqref{eq:obalph1}-\eqref{eq:obalph4}, the map
\begin{equation}\label{contraction}
  t \mapsto \int \limits_{\Gamma} \big ( u_1-u_2\big)_{+} (\cdot,t)\;dS \; \quad \text{is decreasing in time.}
\end{equation}
This property is crucial in the proof of Theorem \ref{1thm:main2}.

\section{Continuity results} \label{Continuity Section}

Our goal in this section is to prove Theorem \ref{1thm:main1}.

For convenience we will work in this section with the reformulation \eqref{eq:obalph1}-\eqref{eq:obalph4}, see Lemma \ref{2lem:equival}.

In \cite[Remark 2.3]{LNRV21} we have shown that the function $\xi$ is given by
\begin{equation}\label{xi1}
  \xi(\cdot,t)=
  \begin{cases}
    1 & \text{ in } \{u(\cdot,t)>0\} \\
    \frac{\alpha(t)g}{1-g} & \text{ in }\{ u(\cdot,t)=0\}
  \end{cases}
\end{equation}
for almost all $t\in (0,T)$.

By the proof of Lemma \ref{2lem:equival} the nondegeneracy assumption \eqref{eq:nondegeneracy1-lambda} is equivalent to
\begin{equation*}
 (1-g)-\alpha_0 g >0 \quad\text{ in } \{u_0=0\} \;,
\end{equation*}
where
\begin{equation*}
  \alpha_0:= \frac{\int_{\{u_0>0\}} (1-g)\,dS}{\int_{\{u_0>0\}} g\,dS}\;.
\end{equation*}
Since $\{u_0=0\}$ is compact we even obtain the uniform positivity of the left-hand side.

We will deduce Theorem \ref{1thm:main1} from the following result and the nondegeneracy assumption \eqref{eq:nondegeneracy2}.

\begin{theorem}\label{3thm:main1}
Suppose that \eqref{eq:nondegeneracy1-lambda} and \eqref{eq:nondegeneracy2} hold true and fix any $\theta>0$ such that
\begin{equation}\label{eq:nondegeneracy1}
 (1-g)-\alpha_0 g \geq\theta >0 \quad\text{ in } \{u_0=0\} \;.
\end{equation}

Then for any arbitrary small $\eta>0$, there exists $\bar t=\bar t(\eta;\theta,u_0,g)>0$ such that
\begin{equation}\label{inclusions1}
	\big(\{u_0>0 \}\big)_{-\eta} \subset \{u(\cdot,t)>0 \} \subset \big( \{u_0>0 \} \big)_{+\eta}
\end{equation}
and
\begin{equation}\label{continuityy}
  |\alpha(t)-\alpha_0| \leq \eta
\end{equation}
for all $t \in [0,\bar t]$.
\end{theorem}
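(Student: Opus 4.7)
The strategy is a bootstrap argument that breaks the circular dependence between $\alpha(t)$ and $A(t):=\{u(\cdot,t)>0\}$: local parabolic barriers built from (3.6) confine $A(t)$ close to $A_0:=\{u_0>0\}$, the regularity condition (2.10) converts this geometric control into $L^1$-closeness of the characteristic functions $\Chi_{A(t)}$ and $\Chi_{A_0}$, and this in turn bounds $|\alpha(t)-\alpha_0|$ and feeds back into the barrier.

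The inner inclusion in (3.7) follows from a direct maximum principle. The source $F:=-(1-g)\xi+\alpha g$ is uniformly bounded on $\Gamma_T$ (since $g_0\le g\le g_1$ gives an a priori bound on $\alpha$, and $0\le\xi\le 1$), so $u-u_0$ solves a linear parabolic equation with $L^\infty$ source $F+\Delta u_0$ starting from zero, yielding $\|u(\cdot,t)-u_0\|_\infty\le Ct$ via the heat semigroup on $\Gamma$. Since $u_0\ge c_\eta>0$ on the compact set $(A_0)_{-\eta}\subset\{u_0>0\}$, this gives $u(\cdot,t)>0$ on $(A_0)_{-\eta}$ for $t\le\bar t_1:=c_\eta/(2C)$. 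For the outer inclusion, compactness of $\{u_0=0\}$ together with continuity of $g$ yields $\rho_0>0$ (depending only on $\theta,g$) such that $(1-g)-\alpha_0 g\ge\theta/2$ on $(\{u_0=0\})_{+2\rho_0}$; we may assume $\eta\le\rho_0$. Set $\delta:=\theta/(4g_1)$ and consider the bootstrap hypothesis that $|\alpha(s)-\alpha_0|\le\delta$ on some interval $[0,\tau]$. Then on $(\{u_0=0\})_{+2\rho_0}\cap\{u>0\}$ one has $\partial_t u-\Delta u=-(1-g)+\alpha g\le-\theta/4$. For any $x_0\notin(A_0)_{+\eta}$ the ball $B_\eta(x_0)$ lies in $\{u_0=0\}\subset(\{u_0=0\})_{+2\rho_0}$, so on $B_\eta(x_0)\times[0,\tau]$ the function $u$ is nonnegative, starts at zero, and satisfies $\partial_t u-\Delta u\le-\theta/4$ on $\{u>0\}$. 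A standard local parabolic barrier---e.g.\ comparison on $B_\eta(x_0)$ with a radially symmetric supersolution of the classical obstacle problem with forcing $-\theta/4$ and lateral data $\|u\|_\infty$, in the spirit of \cite[Theorem 3.2]{EK79}---gives $u(x_0,t)=0$ for $t\le\bar t_2=\bar t_2(\eta,\theta,\|u\|_\infty)$, so $A(t)\subset(A_0)_{+\eta}$ on $[0,\min(\tau,\bar t_2)]$.

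Combining both inclusions on that interval, $A(t)\triangle A_0$ is contained in $(A_0)_{+\eta}\setminus(A_0)_{-\eta}$, whose measure tends to $0$ as $\eta\to 0$ by (2.10). Since $g_0\le g\le g_1$, a direct computation shows the map $B\mapsto\int_B(1-g)/\int_B g$ is Lipschitz in $|B\triangle A_0|$ near $A_0$, giving $|\alpha(t)-\alpha_0|\le L\,\varepsilon(\eta)$ on this interval; after shrinking $\eta$ once more, this is strictly less than both $\delta/2$ and the target accuracy $\eta$ in (3.8). A continuation argument in $\tau$---the strict bound $|\alpha-\alpha_0|<\delta$ persists in a neighborhood of any such $\tau$ by (one-sided) semicontinuity of $\alpha$ arising from the parabolic regularity of $u$ for $t>0$ and the quantitative time-continuity of $u$ above---shows that the set of admissible $\tau\in[0,\bar t_2]$ is both open and closed, hence equals $[0,\bar t_2]$. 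Taking $\bar t:=\min(\bar t_1,\bar t_2)$ yields (3.7) and (3.8). The main difficulty is precisely this closure step: the forcing $-(1-g)+\alpha g$ has no a priori sign, its effective negativity on $\{u_0=0\}$ is conditional on $\alpha\approx\alpha_0$, and the latter is in turn conditional on the support not moving. Condition (2.10) is exactly what linearizes this feedback and closes the loop; without it, the forthcoming \cite{LNRVloading} shows that $\alpha$ can be discontinuous at $t=0$ even when (3.6) holds.
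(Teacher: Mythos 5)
Your strategy is structurally close to the paper's --- a barrier to confine the support, the second nondegeneracy condition to translate geometric control into $L^1$-closeness, and a bootstrap on $|\alpha(t)-\alpha_0|$ --- but you have attempted it directly on the equation \eqref{eq:obalph1}--\eqref{eq:obalph4}, while the paper first passes to the Michaelis--Menten regularization \eqref{reg1}--\eqref{reg2}. That regularization is not a cosmetic device; it is precisely what makes the continuation step close, and the continuation step is where your proof has a genuine gap.

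Concretely, your bootstrap requires that the set
\[
I := \Big\{\tau\in[0,\bar t_2]\;:\; |\alpha(s)-\alpha_0|\leq\delta \text{ for a.e. } s\in(0,\tau)\Big\}
\]
be nonempty and open. You justify both by appealing to a ``(one-sided) semicontinuity of $\alpha$ arising from the parabolic regularity of $u$ for $t>0$ and the quantitative time-continuity of $u$.'' This is not available. The only a priori information on $\alpha$ is $\alpha\in L^\infty(0,T)$ with the bounds \eqref{alphabound}; the map $t\mapsto\chi_{\{u(\cdot,t)>0\}}$ has no a priori modulus of continuity in $L^1(\Gamma)$, and in fact only a lower semicontinuity of the positivity set is available from the continuity of $u$. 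Since on $\{u_0=0\}$ one has $g<\lambda_0$, an instantaneous enlargement of $\{u(\cdot,t)>0\}$ would \emph{increase} $\alpha$, so lower semicontinuity of the support goes in exactly the wrong direction and does not give what you need: a bound $\alpha(s)\leq\alpha_0+\delta$ for $s$ slightly beyond a point where the hypothesis has been verified. The paper is explicit about this: ``A priori, the limit $\lim_{t\to0^+}\alpha(t)$ might not exist or might be different from $\alpha_0$. Therefore, it is convenient to consider a regularized version\dots for which the analog of the function $\alpha$ is smooth.'' Your barrier argument establishes $\{u(\cdot,t)>0\}\subset(\{u_0>0\})_{+\eta}$ only for $t\leq\min(\tau,\bar t_2)$ where $\tau$ is a time up to which the $\delta$-bound on $\alpha$ is \emph{already known}; to extend past $\tau$ you need the forcing to be $\leq-\theta/4$ on $\{u=0\}\cap\{u>0\}$-boundary layer for $t$ slightly beyond $\tau$, which in turn requires the $\alpha$-bound there --- exactly the circularity the paper highlights.

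What the paper does instead: it constructs initial data $u_0^\eps$ for \eqref{reg1}--\eqref{reg2} so that $\alpha_\eps(0)\to\alpha_0$ and the regularized $\alpha_\eps$ is \emph{Hölder continuous in time} (even if with an $\eps$-dependent modulus). Continuity of $\alpha_\eps$ immediately gives a nontrivial interval $[0,T_\eps]$ on which \eqref{assum1} holds, so the set $I$ is nonempty and the barrier, nondegeneracy, and $L^1$-estimates (Lemmas \ref{L.usmall}, \ref{L.alphepsapprox}) then produce, through the quantitative ``gain'' in \eqref{eq:kappa}, a lower bound on $T_\eps$ independent of $\eps$ (Proposition \ref{P.main1}). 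Finally one passes to $\eps\to0$ using the uniform convergence $u_\eps\to u$. The remaining pieces of your argument --- the inner inclusion via $\|u(\cdot,t)-u_0\|_\infty\lesssim t$ (exploiting $u_0\in C^2$ and Duhamel, which actually improves on the paper's $t^{(1+\beta)/2}$ rate in Lemma \ref{L.uniformestimate}), the local barrier on $B_\eta(x_0)$, and the Lipschitz dependence of $\alpha$ on the symmetric difference --- are sound and match the paper's, but without either the regularization or some substitute that supplies a priori time-continuity of the multiplier, the continuation argument does not close and the proof is incomplete at its central step.
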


We will prove this theorem in the following subsections.
As we have already mentioned in Section \ref{Tools}, problem \eqref{eq:obalph1}-\eqref{eq:obalph4} admits a unique nonnegative solution with $\alpha \in L^\infty(0,T)$.
However, beyond this regularity we have no control on $\alpha$ and its evolution could have jumps and oscillations.

\medskip
We note that it is sufficient to prove the claim for all sufficiently small $\eta>0$. In fact, if \eqref{inclusions1}, \eqref{continuityy} hold for some $\eta>0$, both properties hold  for all $\tilde \eta\geq\eta$, with $\bar t(\tilde\eta;\theta,u_0,g)=\bar t(\eta;\theta,u_0,g)$.

\medskip
A priori, the limit $\lim_{t\to 0^+}\alpha(t)$ might not exist or might be different from $\alpha_0$.
Therefore, it is convenient to consider a regularized version of \eqref{eq:obalph1}-\eqref{eq:obalph4}, for which the analog of the function $\alpha$ is smooth.

\subsection{Formulation of the regularized problem}
For $\eps>0$ we consider the following problem
\begin{align}
  \partial_t u_{\eps} - \Delta u_{\eps} &= - (1-g)f_{\eps}(u_{\eps}) +\alpha_{\eps} g \qquad &
  \text{ on } \Gamma\times (0,T)\,,\label{reg1}\\
  u_{\eps}(\cdot,0)&=u_0^{\eps}\qquad &\text{ on }\Gamma\,,\label{reg2}
\end{align}
where
\begin{equation} \label{MM term}
  f_{\eps}(u)=\frac{u}{u+\eps}
\end{equation}
describes a standard Michaelis-Menten law and the nonnegative initial data $u^\eps_0$ will be suitably constructed later (see \eqref{reg3} below).
Arguments similar to those in \cite{LNRV21} imply that a unique smooth solution of the regularized problem exists for all positive times, and that solutions approximate \eqref{eq:obalph1}-\eqref{eq:obalph4} as $\eps\to 0$.
More precisely, for any $1\leq p<\infty$ and $0<\beta<1$ we have
\begin{align}
  u_\eps &\weakto u \quad\text{ in }W^{2,1}_p(\Gamma_T), \label{conv-eps-1}\\
  u_\eps &\to u \quad\text{ in }C^{1+\beta,\frac{1+\beta}{2}}(\Gamma_T).
  \label{conv-eps-2}
\end{align}
It follows from \eqref{reg1} that $\alpha_{\eps}$ is given by
\begin{equation}\label{alphaeps}
  \alpha_{\eps}(t) = \frac{\int_{\Gamma}(1-g)f_{\eps}(u_{\eps}(\cdot,t))\,dS}{\int_{\Gamma} g\,dS}\,.
\end{equation}
We notice that $\alpha_\eps$ is Hölder continuous but that a priori there is no uniform bound on its modulus of continuity.
Moreover, the solution of \eqref{reg1}-\eqref{reg2} is strictly positive for all positive times.
We aim to prove uniform continuity estimates for the function $\alpha_{\eps}$ as $\eps \to 0^+$ for short times.

We stress that solutions to the original problem \eqref{eq:obalph1}-\eqref{eq:obalph4} and to the regularization \eqref{reg1}-\eqref{reg2} exist globally in time.
As we are only concerned with the behavior for small times, throughout this paper we will consider a fixed time interval $(0,T)$ that is  independent of $\eps$.

For further reference we also note that due to \eqref{gassumptions} we have
\begin{equation}\label{alphabound}
  0 <c(g) \leq \alpha_{\eps} ,\, \alpha \leq C(g) \qquad \text{ in }  [0,T]\,.
\end{equation}

{\bf Construction of initial data for the regularized problem:}
Due to \eqref{eq:nondegeneracy1} and the continuity of $g$, we can choose a sufficiently small $\sigma>0$ such that
\begin{equation}\label{nondegeneracy2}
 (1-g)-\alpha_0g \geq \frac{\theta}{2} \qquad \text{ in } K_\sigma:=\{u_0=0\}_{+\sigma}\;.
\end{equation}
The definition of the set $\{\; \cdot\;\}_{+ \sigma}$ is given in Definition \ref{app delta sets defn}.
Then, using \eqref{nondegeneracy2}, we can define the positive function
\begin{equation}\label{unodhat}
 {\hat u}_0^{\eps}:= \frac{\eps\alpha_0g}{(1-g) - \alpha_0g} \qquad \text{ in }
 K_{\sigma}\,.
\end{equation}
Moreover, due to \eqref{MM term}, we have that
\begin{equation}\label{unodhat2}
 -f_\eps({\hat u}_0^{\eps})(1-g) + \alpha_0g =0 \quad\text{ in }K_\sigma.
\end{equation}
We observe that
\begin{equation}
 \eps \frac{(1-\max g)g}{\max g}\leq {\hat u}_0^{\eps} \leq \eps \frac{2(1-g)-\theta}{\theta}\quad\text{ in }K_\sigma,
 \label{bounds-hatu0}
\end{equation}
where we have used $\alpha_0\geq \frac{1-\max g}{\max g}$ and \eqref{nondegeneracy2}.

We now consider a smooth cut-off function $\zeta \in C^1_c(\Gamma)$ with the following properties. We assume that, $0\leq \zeta \leq 1$ in $\Gamma$, $\zeta =1$ in $\{u_0=0\}$ and $\zeta = 0 $ in $\Gamma \backslash K_{\sigma}$ with $|\nabla \zeta | \leq \frac{\kappa}{\sigma}$ for some $\kappa>0$ which is independent of $\sigma$.
Then we take as initial data in \eqref{reg2} the function
\begin{equation}\label{reg3}
 u_0^{\eps}=  u_0+{\hat u}_0^{\eps}\zeta \qquad
 \text{ in } \Gamma\,,
\end{equation}
in particular $u_0^{\eps}\geq u_0$ on $\Gamma$ and $u_0^{\eps}={\hat u}_0^{\eps}$ in $\{u_0=0\}$.
Moreover, we observe that
\begin{equation}\label{initial reg bound}
u^\eps_0 \leq m \eps\quad \text{ in } \{u_0=0\},
\end{equation}
for some $m=m(g)>0$.

\begin{remark_nn}
We point out that $\sigma>0$ is fixed throughout the paper and does only depend on $\theta$ and the modulus of continuity of $g$.
Here its only role is to specify the size of the region in which we can define $\hat u^\varepsilon_0$ via \eqref{unodhat}.
\end{remark_nn}

We can easily check that $u_0^{\eps} \to u_0$ uniformly on $\Gamma$ as $\eps \to 0$.
Furthermore we obtain
\begin{align*}
 \alpha_{\eps}(0) \int_{\Gamma} g\,dy& = \int_{\Gamma} (1-g) f_{\eps}(u_0^{\eps})\,dy
 \to \int_{\{u_0>0\}} (1-g)\,dy + \alpha_0 \int_{\{u_0=0\}}g\,dy\\
 &= \alpha_0 \int_{\{u_0>0\}} g\,dy + \alpha_0\int_{\{u_0=0\}}g\,dy=\alpha_0\int_{\Gamma}g\,dy
\end{align*}
and hence we have that $\alpha_{\eps}(0) \to \alpha_0$ and
\begin{equation}\label{nondegeneracy3}
 (1-g(x))-\alpha_{\eps}(0)g(x) \geq\frac{ 3 \theta}{4}  \qquad\text{ for all } x \in \{u_0=0\}\,
\end{equation}
for sufficiently small $\eps>0$.

The continuity of the function $t \mapsto u_\varepsilon(\cdot,t)$ at $t=0$ in the uniform topology and \eqref{MM term}, imply that $\alpha_{\eps}$ is also continuous at $t=0$.
Combining this and the fact that $\alpha_\eps(0)\to \alpha_0$ as $\varepsilon \to 0$, we conclude that for any $\eta>0$ there exists $T_\eps>0$, in principle depending on $\eps$ such that
\begin{equation}\label{assum1}
 |\alpha_{\eps}(t)-\alpha_0|\leq \eta \quad\text{ for all }t\leq T_\eps.
\end{equation}
Recall that it is sufficient to prove the claims for all  sufficiently small $\eta>0$. In the following we will assume that $0<\eta<\frac{\theta}{4 \max g}$.
Then by \eqref{eq:nondegeneracy1} and \eqref{assum1} we conclude that
\begin{equation}\label{nondegeneracy4}
 (1-g(x))-\alpha_{\eps}(t)g(x) \geq\frac{\theta}{2} \qquad\text{ for all } x \in \{u_0=0\} \text{ and }
 t \in [0,T_\eps]\,.
\end{equation}
Our goal will be to show that there exists a time $\bar t>0$ that is independent of $\eps$ such that \eqref{assum1} and hence
\eqref{nondegeneracy4} hold also in $[0,\bar t]$.
We will use this result to show that
for some $L_0>0$ and for $t \in [0,\bar t]$ the sets $\{u_{\eps}(\cdot,t) > L_0\eps\}$ and
$\{u_0^{\eps}>L_0\eps\}$ are close in the sense of Lebesgue measure for small $t$.
Then we can pass to the limit $\eps\to0$ to conclude the same for
$\{u(\cdot,t)>0\}$ and $\{u_0>0\}$.
Recalling the form of $\alpha$ in \eqref{eq:obalph3} we see that this is the key estimate in order to prove the continuity of $\alpha$.

\subsection{Uniform continuity of $\alpha_\eps$ at $t=0$}
\label{S.tzero}

We start with a few auxiliary lemmas.
The dependence of constants on the data $u_0$ and $g$, also through the  uniform bounds on $\alpha_{\eps}, \alpha$ in \eqref{alphabound}, will not be written explicitly in the following.
However, any additional dependence will be specified each time.

Furthermore  $u_{\eps}$ always denotes a solution to \eqref{reg1}-\eqref{reg2} with data $u^\varepsilon_0$ as in
\eqref{reg3} and $T_\eps>0$ such that \eqref{nondegeneracy4} holds.

\bigskip

Our first result yields uniform continuity in $\eps$ for short times.

\begin{lemma}\label{L.uniformestimate}
Let $u_{\eps}$ be the unique solution to \eqref{reg1}-\eqref{reg2} with data $u^\varepsilon_0$ as in \eqref{reg3}.
Then,
\begin{equation}\label{uniformestimate}
  \|u_{\eps}(\cdot,t)-u_0^{\eps}\|_{L^{\infty}(\Gamma)} \leq C_1 t^{\frac{1+\beta}{2}} \qquad \text{ for all } t\leq T
\end{equation}
and for all $ 0<\beta <1\;$.
\end{lemma}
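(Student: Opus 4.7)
The strategy is to reduce \eqref{uniformestimate} to a direct application of the parabolic maximum principle for $v_\eps := u_\eps - u_0^\eps$, once one establishes $\eps$-independent $L^\infty$ bounds on the source term in \eqref{reg1} and on $\Delta u_0^\eps$.

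First I would verify the uniform bounds. For the source term $F_\eps := -(1-g)f_\eps(u_\eps) + \alpha_\eps g$, property \eqref{MM term} gives $0 \leq f_\eps \leq 1$, \eqref{gassumptions} gives $0 < g < 1$, and \eqref{alphabound} provides the uniform upper bound on $\alpha_\eps$; thus $\|F_\eps\|_{L^\infty(\Gamma_T)} \leq M$ for some $M$ independent of $\eps$. For the initial datum, the key point is that $\sigma>0$ in \eqref{nondegeneracy2} is fixed independently of $\eps$, so the cut-off $\zeta$ may be chosen once and for all in $C^2_c(\Gamma)$ with $\eps$-independent $C^2$-norm. Since $u_0, g \in C^2(\Gamma)$ and $\hat u_0^\eps$ in \eqref{unodhat} scales like $\eps$ in $C^2(K_\sigma)$ by \eqref{bounds-hatu0}, the datum $u_0^\eps = u_0 + \hat u_0^\eps \zeta$ is uniformly bounded in $C^2(\Gamma)$, and in particular $\|\Delta u_0^\eps\|_{L^\infty(\Gamma)} \leq M'$ for some $\eps$-independent $M'$.

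The main step is then brief. The difference $v_\eps = u_\eps - u_0^\eps$ vanishes at $t=0$ and satisfies
\begin{equation*}
  \partial_t v_\eps - \Delta v_\eps = F_\eps + \Delta u_0^\eps \quad\text{on } \Gamma_T,
\end{equation*}
with right-hand side bounded by $M_0 := M + M'$ in $L^\infty(\Gamma_T)$. Comparing $\pm v_\eps$ against the spatially constant supersolution $w(t) := M_0 t$ via the parabolic maximum principle on $\Gamma$ (available by Remark \ref{LB explained}) yields $\|v_\eps(\cdot,t)\|_{L^\infty(\Gamma)} \leq M_0 t$ for every $t \in [0,T]$. Since $(1+\beta)/2 < 1$, on $[0,T]$ one has $t \leq T^{(1-\beta)/2}\, t^{(1+\beta)/2}$, and \eqref{uniformestimate} follows with $C_1 := M_0\, T^{(1-\beta)/2}$.

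I do not foresee any genuine obstacle here. The argument in fact delivers the stronger linear-in-$t$ rate; the Hölder form in \eqref{uniformestimate} is presumably written this way to fit the parabolic regularity class $C^{1+\beta,(1+\beta)/2}$ already invoked in \eqref{conv-eps-2}. The only point that needs care is the $\eps$-independence of all constants, which rests on $\sigma$ being fixed (rather than shrinking with $\eps$) and on the uniform bounds \eqref{alphabound} on $\alpha_\eps$.
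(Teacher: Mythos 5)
Your argument is correct, and it takes a genuinely different and more elementary route than the paper's. The paper cites standard H\"older regularity theory for parabolic equations: from the uniformly bounded right-hand side of \eqref{reg1} (together with the implicit uniform regularity of $u_0^\eps$) one gets a uniform bound on $u_\eps$ in $C^{1+\beta,\frac{1+\beta}{2}}(\Gamma_T)$, and the time-H\"older seminorm at $t=0$ immediately gives \eqref{uniformestimate}. You instead avoid Schauder theory altogether: you subtract the initial datum, observe that $v_\eps = u_\eps - u_0^\eps$ solves a parabolic equation with an $\eps$-uniformly $L^\infty$-bounded source ($F_\eps + \Delta u_0^\eps$) and zero initial data, and conclude $|v_\eps(\cdot,t)|\leq M_0 t$ by comparison with the barrier $M_0 t$; the H\"older form is then recovered from $t\leq T^{(1-\beta)/2}t^{(1+\beta)/2}$. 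Your route is more self-contained (only the maximum principle of Remark \ref{LB explained}) and in fact yields the stronger Lipschitz-in-time rate. The one point you should flag: the paper writes $\zeta\in C^1_c(\Gamma)$, so to have $\Delta u_0^\eps\in L^\infty$ you must, as you note, take $\zeta$ in (at least) $C^2_c(\Gamma)$ — this is harmless since the paper also calls $\zeta$ ``smooth'' and the only constraint on $\zeta$ is the gradient bound $|\nabla\zeta|\leq\kappa/\sigma$ with $\sigma$ fixed, but it is a mild strengthening of the stated hypothesis on $\zeta$. Your observation about $\eps$-independence of all constants (through the fixed $\sigma$ and the uniform bound \eqref{alphabound}) is exactly the point that also underlies the paper's appeal to uniform H\"older estimates.
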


\begin{proof}
Since $u_{\eps}$ is a solution to \eqref{reg1}-\eqref{reg2} with data as in \eqref{reg3} and the right-hand side of \eqref{reg1} is uniformly bounded in $\eps$, we deduce from standard
H\"older regularity results for parabolic equations, see \cite{F64}, that $u_{\eps} \in C^{1+\beta,\frac{1+\beta}{2}}(\Gamma_{T})\;$ for any $0<\beta<1\;$.
The claim then follows.
\end{proof}

 To proceed, we define for the sake of simplicity
\begin{equation}
  U:=\{ u_0 =0 \}\;, \quad  V:= \Gamma \backslash U = \{ x \,|\, u_0(x)>0\}\, \label{U,V defn}
\end{equation}
and for any sufficiently small $\delta>0$  we define
\begin{equation}\label{Ud,Vd}
  U_{-\delta} :=\big ( \{u_0=0\} \big)_{-\delta}=\{x\;| d(x,\big ( \{u_0>0\} \big)) \geq \delta \}, \quad V^\delta:= \{u_0\geq \delta\} \;.
\end{equation}

A key property in the analysis of free boundary problems is the so-called nondegeneracy property.

This property states that if a solution to \eqref{eq:obs-class1}-\eqref{eq:obs-class3} is small in a sufficiently large open set, then it vanishes in a smaller set (cf.
\cite{BF76}).
A version of this property for the stationary solutions to \eqref{eq:obalph1}-\eqref{eq:obalph3}, has been formulated in \cite[Proposition 3.9(5)]{NRV20}.
The next lemma yields a variation of this nondegeneracy property for the regularized problem \eqref{reg1}, \eqref{reg2}.

Since the solution to \eqref{reg1}, \eqref{reg2} is strictly positive due to the maximum principle, the corresponding nondegeneracy result is formulated as follows.
If $u_\eps$ is smaller than some number independent of $\eps$, in a sufficiently large set with size independent of $\eps$, then $u_\eps \leq L_0\eps$ for some positive constant $L_0$ which is independent of $\eps$, in a smaller set with size independent of $\eps$ as well.
It is worth noticing that in the limit $\eps \to 0^+$, this result would "converge" to the standard nondegeneracy result for the Stefan problem, cf. \cite[Theorem $3.1$]{BF76}.

\begin{lemma}\label{L.usmall}
Consider $T_\eps>0$ such that \eqref{assum1} holds.
Then there exist positive constants $\rho_{\max}=\rho_{\max}(\Gamma)$, $A=A(\Gamma)$ and $L_0=L_0(g,\theta)$ such that for any $ \tilde t \in [0,T_\eps]$ and any $\rho\in (0,\rho_{max})$ the following holds:

If $B_{2\rho}(x_0)\subset U$ and if
\begin{equation*}
  u_{\eps} \leq \frac{\theta}{A} \rho^2 \qquad \text{ in } B_{2 \rho}(x_0)\times [0,\tilde t]\,,
\end{equation*}
then $u_\eps$ satisfies
\begin{equation*}
  u_{\eps} \leq L_0\eps \qquad \text{ in } B_{\rho}(x_0)\times [0,\tilde t ].
\end{equation*}
\end{lemma}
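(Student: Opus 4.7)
The approach I would take follows the classical Caffarelli--Friedman nondegeneracy scheme for the obstacle problem, carefully adapted to the Michaelis--Menten regularization \eqref{MM term} and to the manifold setting. The key observation is that on $B_{2\rho}(x_0)\subset U$ the reaction term in \eqref{reg1} becomes uniformly negative once $u_\eps$ exceeds a fixed multiple of $\eps$: combining $f_\eps(u)=u/(u+\eps)\ge L/(L+1)$ for $u\ge L\eps$ with the propagated nondegeneracy \eqref{nondegeneracy4} available up to time $T_\eps\ge \tilde t$, one obtains on $\{u_\eps\ge L\eps\}\cap(B_{2\rho}(x_0)\times[0,\tilde t])$
\begin{equation*}
-(1-g)f_\eps(u_\eps)+\alpha_\eps g \;\le\; -\bigl[(1-g)-\alpha_\eps g\bigr]+\frac{1-g}{L+1} \;\le\; -\frac{\theta}{2}+\frac{1-g_0}{L+1} \;\le\; -\frac{3\theta}{8},
\end{equation*}
provided $L\ge L_1(\theta,g_0):=8(1-g_0)/\theta$.

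Next I would build an explicit radial super-solution. In normal coordinates $y=\exp_{x_0}^{-1}(x)$ at $x_0$, set
\begin{equation*}
\phi(x)\;=\;L_0\eps + c\,\bigl(|y|-\rho\bigr)_+^{2},
\end{equation*}
with $L_0\ge\max\{L_1(\theta,g_0),m(g)\}$ and $c=c_0\theta$ for some small $c_0=c_0(\Gamma)>0$ to be fixed. A direct computation gives $\Delta_{\mathrm{eucl}}(|y|-\rho)_+^2\le n+1$ (with $n=2$) in the annulus, while $\phi$ is constant on $B_\rho(x_0)$. For $\rho\le\rho_{\max}(\Gamma)$ the Laplace--Beltrami operator differs from $\Delta_{\mathrm{eucl}}$ by lower-order terms bounded in terms of the curvature of $\Gamma$, so choosing $c_0$ small enough ensures $\Delta\phi\le 3\theta/8$ in the annulus and $\partial_t\phi-\Delta\phi=0$ in $B_\rho(x_0)$. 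Combined with the sign analysis above, $\phi$ is a weak super-solution of \eqref{reg1}. On $\partial B_{2\rho}(x_0)$ one has $\phi\ge c\rho^2$, which dominates the hypothesis $u_\eps\le \tfrac{\theta}{A}\rho^2$ as soon as $A\ge 1/c_0=:A(\Gamma)$; at $t=0$, the inclusion $B_{2\rho}(x_0)\subset U$ and \eqref{initial reg bound} yield $u_0^\eps\le m\eps\le L_0\eps\le\phi(\cdot,0)$. Since $u\mapsto -(1-g)f_\eps(u)$ is smooth and monotone decreasing, the standard parabolic comparison principle applies and gives $u_\eps\le\phi$ on $B_{2\rho}(x_0)\times[0,\tilde t]$, hence $u_\eps\le L_0\eps$ on $B_\rho(x_0)\times[0,\tilde t]$.

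The two points requiring care are: (i) the lack of $C^2$-regularity of $\phi$ across $\{|y|=\rho\}$, handled either by smoothing $(\cdot)_+^2$ or by interpreting the super-solution inequality in the viscosity sense -- both work because the one-sided Laplacians at the junction jump in the favorable direction; and (ii) quantifying the deviation of $\Delta$ from the Euclidean Laplacian at scale $\rho$, which is what pins down the threshold $\rho_{\max}(\Gamma)$ and the geometric constant $A(\Gamma)$ through the $C^2$-coefficients of the elliptic operator described in Remark \ref{LB explained}. This geometric bookkeeping is the main technical obstacle; conceptually, the argument is a classical barrier comparison in which the regularization $f_\eps$ replaces the obstacle and the scale $L_0\eps$ plays the role of the coincidence set.
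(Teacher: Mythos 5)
Your proof is correct and rests on the same mechanism as the paper's — a parabolic barrier comparison exploiting that the regularized right-hand side $-(1-g)f_\eps(u_\eps)+\alpha_\eps g$ is uniformly bounded by $-c\theta$ once $u_\eps\geq L_0\eps$, thanks to the propagated nondegeneracy \eqref{nondegeneracy4} — but the implementation differs in structure. The paper argues by contradiction: assuming $u_\eps(y,\tau)>L_0\eps$ for some $(y,\tau)\in B_\rho(x_0)\times[0,\tilde t]$, it centers a \emph{time-dependent} quadratic barrier $\tilde u=L_0\eps+\frac{\theta}{A}|x-y|^2+\frac{\theta}{8}(\tau-t)$ at the bad point, using the extrinsic Euclidean distance so the Laplace--Beltrami correction is a bounded mean-curvature term, and it applies the maximum principle only on the superlevel set $(B_{2\rho}\times[0,\tau])\cap\{u_\eps\geq L_0\eps\}$, where the difference $u_\eps-\tilde u$ is a subsolution of the heat equation. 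You instead perform a direct comparison on the full cylinder against the \emph{stationary} truncated barrier $\phi=L_0\eps+c(|y|-\rho)_+^2$ in normal coordinates; since $\phi\geq L_0\eps$ everywhere, $\phi$ is an a.e.\ supersolution of the full semilinear equation \eqref{reg1} (it is $C^{1,1}$, so the kink at $|y|=\rho$ causes no singular measure), and monotonicity of $u\mapsto -(1-g)f_\eps(u)$ gives the conclusion without tracking the free boundary of the superlevel set. Both routes are standard and valid; yours is a bit more self-contained (no contradiction, no restriction of the comparison domain) at the cost of introducing the truncation, while the paper's moving-center barrier avoids the junction but needs the superlevel-set restriction and the explicit time term. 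The quantitative bookkeeping you describe — absorbing the geometric corrections into $\rho_{\max}(\Gamma)$ and $A(\Gamma)$, with $L_0$ large enough that both $(1-g_0)/(L_0+1)\lesssim\theta$ and $L_0\geq$ the constant $m(g)$ from \eqref{initial reg bound} — matches \eqref{L0choice} and the paper's choice of $\rho_{\max}$, $A$.
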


\begin{proof}
Without loss of generality  $x_0=0\in\Gamma$.
Let us also recall \eqref{initial reg bound}.
Then we can choose $L_0>0$ such that
\begin{equation}\label{L0choice}
  L_0\geq 2 m \quad \text{and} \quad (1-g)\frac{1}{L_0+1}\leq \frac{\theta}{4}\;.
\end{equation}
We proceed by contradiction, that is we suppose that there exists $(y,\tau ) \in B_{\rho}(0)\times [0,\tilde t]$ such that $u_{\eps}(y,\tau) > L_0\eps$.
As a candidate for a supersolution we define
\begin{equation*}
  \tilde u(x,t):=L_0\eps +  \frac{\theta}{A}|x-y|^2  + \frac{\theta}{8}(\tau-t)
\end{equation*}
and compute  that
\begin{equation*}
\partial_t \tilde u - \Delta \tilde u = -\frac{\theta}{8} - \frac{4 \theta}{A} - \frac{2\theta}{A} \vec H \cdot (x-y) \geq - \frac{\theta}{4}
\end{equation*}
if $\rho \leq \rho_{\max}(\Gamma)$  is sufficiently large.
(Here $\vec H$ denotes the mean curvature vector on $\Gamma$.)  By \eqref{nondegeneracy4} and the choice
of $L_0$ we have in $(B_{2 \rho} (0)\times [0,\tau]) \cap \{u_{\eps} \geq L_0\eps\}$ that
\begin{equation*}
 \partial_t u_{\eps} - \Delta u_{\eps} = -(1-g)f_{\eps}(u_{\eps}) + \alpha_{\eps}(t)g \leq - \frac{\theta}{4} \leq \partial_t \tilde u - \Delta \tilde u\;.
\end{equation*}

Furthermore, we check that on $(\partial B_{2 \rho}(0) \times [0,\tau]) \cap \{u_{\eps} \geq L_0\eps\}$ we have
\begin{equation*}
 u_{\eps} \leq \frac{\theta}{A} \rho^2 \leq \tilde u
\end{equation*}
while
on $(B_{2 \rho}(0) \times [0,\tau]) \cap \partial \{u_{\eps} \geq L_0\eps\}$ it clearly holds that $u_{\eps} - \tilde u \leq 0$.
Furthermore $u_{\eps}(\cdot,0) \leq \tilde u(\cdot,0)$ by our choice of $L_0$ in \eqref{L0choice}.
Hence the parabolic maximum principle implies $u \leq \tilde u$ in $(B_{2\rho}(0)\times [0,\tau])\cap
\{u_{\eps} \geq L_0\eps\}$
which gives a contradiction.
\end{proof}

Now, we can prove the left inclusion in \eqref{inclusions1} for the solution of the regularized problem \eqref{reg1}, \eqref{reg2}.

\begin{corollary}\label{L.inclusion}
Let $\delta>0$, $\beta \in (0,1)$ be fixed and $T_\eps>0$ be such that \eqref{assum1} holds.
Let $A=A(\Gamma)$ be as in Lemma \ref{L.usmall}, $C_1>0$ as in Lemma \ref{L.uniformestimate} and set
\begin{equation}\label{tstardef}
  t^*(\delta):= \bigg ( \frac{\theta \delta^2}{8 A C_1} \bigg )^{\frac{2}{1+\beta}}.
\end{equation}
Then, there exists $\eps_0=\eps_0(\delta,u_0,g) > 0$, such that for all $0<\eps \leq \eps_0$ we have
\begin{equation*}
  U_{-\delta} \subset \{u_{\eps}(\cdot,t) \leq L_0\eps\}
  \quad\text{ for all }\quad 0 \leq t \leq \min\{T_\eps, t^*(\delta)\},
\end{equation*}
where $U_{-\delta}$ is given by \eqref{Ud,Vd}.
\end{corollary}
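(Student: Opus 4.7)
The plan is to verify, for every $x_0\in U_{-\delta}$, that the hypothesis of Lemma \ref{L.usmall} is satisfied on a ball of radius $\rho=\delta/2$ around $x_0$ throughout the time interval $[0,\min\{T_\eps,t^*(\delta)\}]$, and then read off the conclusion $u_\eps(x_0,t)\leq L_0\eps$ from that lemma. Pointwise over $x_0$ this yields the claimed inclusion. (If $\delta$ is so large that $\delta/2>\rho_{\max}$, we simply replace $\rho$ by $\rho_{\max}$; the ball inclusion $B_{2\rho}(x_0)\subset U$ is still guaranteed by $x_0\in U_{-\delta}$.)

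First I would bound the regularized initial datum in the inactive region. By \eqref{reg3}, \eqref{unodhat} and \eqref{bounds-hatu0}, there exists a constant $M=M(g,\theta)>0$ with
\begin{equation*}
0\leq u_0^\eps(x)\leq M\eps\qquad\text{for every }x\in U.
\end{equation*}
Next, for $x_0\in U_{-\delta}$ fixed, set $\rho:=\min\{\delta/2,\rho_{\max}\}$, so that $B_{2\rho}(x_0)\subset U$. Lemma \ref{L.uniformestimate} combined with the above bound gives
\begin{equation*}
u_\eps(x,t)\leq u_0^\eps(x)+\|u_\eps(\cdot,t)-u_0^\eps\|_{L^\infty(\Gamma)}\leq M\eps+C_1 t^{\frac{1+\beta}{2}}\qquad\text{on }B_{2\rho}(x_0)\times[0,T].
\end{equation*}

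Now I would calibrate the two parameters so that the right-hand side stays below the threshold $\frac{\theta}{A}\rho^2$ required by Lemma \ref{L.usmall}. The definition \eqref{tstardef} of $t^*(\delta)$ was tailored precisely for this: for every $t\leq t^*(\delta)$ one has $C_1 t^{(1+\beta)/2}\leq \frac{\theta\delta^2}{8A}\leq \frac{\theta}{2A}\rho^2$. Choosing
\begin{equation*}
\eps_0(\delta,u_0,g):=\frac{\theta\delta^2}{8AM},
\end{equation*}
the contribution $M\eps$ is also bounded by $\frac{\theta\delta^2}{8A}\leq \frac{\theta}{2A}\rho^2$ for all $0<\eps\leq\eps_0$, so altogether $u_\eps\leq \frac{\theta}{A}\rho^2$ on $B_{2\rho}(x_0)\times[0,\min\{T_\eps,t^*(\delta)\}]$. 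Lemma \ref{L.usmall}, applicable because \eqref{assum1} (and hence \eqref{nondegeneracy4}) holds on this time interval, then yields $u_\eps\leq L_0\eps$ on $B_\rho(x_0)\times[0,\min\{T_\eps,t^*(\delta)\}]$; evaluating at $x_0$ and letting $x_0$ range over $U_{-\delta}$ gives the stated inclusion.

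I do not expect a genuine obstacle here: the statement is essentially a bookkeeping exercise that matches the parameters of Lemma \ref{L.uniformestimate} and the nondegeneracy Lemma \ref{L.usmall}. The only subtlety is the uniform initial bound $u_0^\eps\leq M\eps$ on $U$, which is built into the construction \eqref{unodhat}–\eqref{reg3}, and the fact that the application of Lemma \ref{L.usmall} requires \eqref{nondegeneracy4} throughout the time window considered — this is exactly why the conclusion is restricted to $t\leq T_\eps$.
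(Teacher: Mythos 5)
Your proof is correct and follows essentially the same route as the paper: bound $u_\eps$ on $U$ by $\|u_0^\eps\|_{L^\infty(U)}+ C_1 t^{(1+\beta)/2}$, tune $t^*(\delta)$ and $\eps_0(\delta)$ so this stays below the threshold $\frac{\theta}{A}\rho^2$, and invoke Lemma \ref{L.usmall} with $\rho=\min(\rho_{\max},\delta/2)$. Your write-up is in fact a bit cleaner than the paper's — you avoid the time-interval case split and make the choice of $\eps_0$ explicit — while the paper only leaves $\eps_0$ implicit in the step that absorbs $2m\eps$ below $\frac{\theta}{A}(\delta/2)^2$; like the paper, you tacitly assume $\delta/2\le\rho_{\max}$ when you claim $\frac{\theta\delta^2}{8A}\le\frac{\theta}{2A}\rho^2$, which is harmless here since the corollary is only needed for small $\delta$.
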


\begin{proof}
Due to \eqref{initial reg bound}  Lemma \ref{L.uniformestimate} implies
\begin{equation*}
 \|u_{\eps}(\cdot,t)\|_{L^{\infty}(U)} \leq \| u_{\eps}(\cdot,t)-u_0^{\eps}\|_{L^{\infty}(U)} +\|u_0^{\eps} \|_{L^{\infty}(U)}\leq C_1t^{\frac{1+\beta}{2}} + m\eps\,.
\end{equation*}
Hence, we deduce by \eqref{L0choice} that
\begin{equation*}
  \|u_{\eps}(\cdot,t)\|_{L^{\infty}(U)} \leq 2m\eps \leq L_0\varepsilon
  \quad\text{ for all } 0\leq t\leq \min\Big\{ T_\eps, t^*(\delta), \Big(\frac{m\eps}{C_1} \Big )^{\frac{2}{1+\beta}}\Big\}\;.
\end{equation*}
On the other hand, for $(\frac{m\eps}{C_1} \big )^{\frac{2}{1+\beta}}<t\leq \min\{T_\eps, t^*(\delta)\}$ we obtain by \eqref{tstardef} that
\begin{equation*}
  \|u_{\eps}(\cdot,t)\|_{L^{\infty}(U)} \leq 2C_1\big(\min\{T_\eps, t^*(\delta)\}\big)^{\frac{1+\beta}{2}} \leq \frac{\theta}{A} \Big(\frac{\delta}{2}\Big)^2\;.
\end{equation*}
We now apply Lemma \ref{L.usmall}.
To this end we choose $\rho = \min(\rho_{\max},\delta/2)$ and arbitrary $x_0 \in U_{-\delta}$.
Then Lemma \ref{L.usmall} implies the claim.
\end{proof}

As long as \eqref{assum1} is valid, we obtain in the next lemma some detailed pointwise estimates for the function $f_\varepsilon(u_\varepsilon)$ which is given by \eqref{MM term}.

\begin{lemma}\label{L.alphepsapprox}
Let $\eta \in \big(0, \frac{\theta}{4\max g}\big]$ and $T_\eps>0$ be such that \eqref{assum1} holds.
Then for given small $\delta>0$ there exists a constant $C_{\delta}>0$ such that in the set $U_{-2\delta} \times (0,T_\eps)$ we have
\begin{equation}\label{estimate1}
  f_\varepsilon(u_\varepsilon) (1-g) - (\alpha_0+ \eta) g \leq  C_{\delta} \eps
\end{equation}
and
\begin{equation}\label{estimate2}
  f_\varepsilon(u_\varepsilon) (1-g) - (\alpha_0- \eta) g \geq - C_{\delta} \eps\,.
\end{equation}
\end{lemma}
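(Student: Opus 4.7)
The estimate is quantitative: deep in the zero set of $u_0$, the quantity $(1-g)f_\eps(u_\eps) - (\alpha_0\pm\eta)g$ is $O(\eps)$, reflecting that $u_\eps$ has relaxed to the spatially varying ODE equilibrium of the reaction $-(1-g)f_\eps(\cdot) + \alpha_\eps g$. My plan is a parabolic comparison argument with carefully calibrated sub- and super-solutions, using the Corollary \ref{L.inclusion} bound as the boundary data on $\partial U_{-\delta}$.

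First, introduce the stationary profiles $v^\pm_\eps(x) := \eps(\alpha_0\pm\eta)g(x)/[(1-g(x)) - (\alpha_0\pm\eta)g(x)]$. By \eqref{nondegeneracy2} together with $\eta \le \theta/(4\max g)$, the denominator exceeds $\theta/4$ on $K_\sigma\supset U$, so the $v^\pm_\eps$ are nonnegative $C^2$ functions of order $\eps$ with $\|D^2 v^\pm_\eps\|_\infty = O(\eps)$ satisfying $(1-g)f_\eps(v^\pm_\eps) = (\alpha_0\pm\eta)g$ identically. A direct computation gives
\[
f_\eps(v^+_\eps + s) - f_\eps(v^+_\eps) \;=\; \frac{\eps\, s}{(v^+_\eps+\eps)(v^+_\eps+s+\eps)},
\]
and together with the two-sided bound $\eps \le v^+_\eps+\eps \le 4\eps/\theta$ this shows $f'_\eps(v^+_\eps) = \Theta(1/\eps)$. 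Consequently the desired \eqref{estimate1} is equivalent to the sharp pointwise bound $u_\eps \le v^+_\eps + O(\eps^2)$ in the interior, which is much finer than the $O(\eps)$ boundary data that Corollary \ref{L.inclusion} provides.

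To bridge this scale mismatch, choose a smooth cutoff $\phi = \phi_\delta$ with $\phi \equiv \eps$ on $U_{-2\delta}$, $\phi \equiv 1$ on $\partial U_{-\delta}$, smooth transition in the ring of width $\geq \delta$ between them, and $|\Delta\phi| \le C/\delta^2$. Set $\Psi^+ := v^+_\eps + L_1\eps\phi$ with $L_1 = L_1(\delta,\theta,g) \ge L_0$ sufficiently large. The displayed identity combined with $\eps \le v^+_\eps+\eps \le 4\eps/\theta$ yields $(1-g)[f_\eps(\Psi^+) - f_\eps(v^+_\eps)] \ge c_{g,\theta}\, L_1\eps\phi$, hence, using $(\alpha_0+\eta-\alpha_\eps)g \ge 0$ from \eqref{assum1} and $|\Delta\Psi^+| \le C_\delta\eps$,
\[
\partial_t\Psi^+ - \Delta\Psi^+ + (1-g)f_\eps(\Psi^+) - \alpha_\eps g \;\ge\; 0
\]
on $U_{-\delta}\times(0,T_\eps)$. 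Since $u_0^\eps = \hat u_0^\eps \le v^+_\eps$ on $U$ (as $\alpha_0<\alpha_0+\eta$) and $u_\eps \le L_0\eps \le L_1\eps = \Psi^+$ on $\partial U_{-\delta}$ by Corollary \ref{L.inclusion}, the parabolic maximum principle gives $u_\eps \le \Psi^+$ on $U_{-\delta}\times[0,T_\eps\wedge t^*(\delta)]$. Restricting to $U_{-2\delta}$, where $\phi\equiv\eps$, gives $u_\eps \le v^+_\eps + L_1\eps^2$, and substituting back into the displayed identity produces \eqref{estimate1}. The companion bound \eqref{estimate2} follows symmetrically from the subsolution $\Psi^- := (v^-_\eps - L_1\eps\phi)_+$, whose non-negativity automatically handles its own boundary condition on $\partial U_{-\delta}$; the remaining time interval up to $T_\eps$ is recovered by a standard bootstrap, re-running the same argument with the (much sharper) state at time $t^*(\delta)$ as initial datum.

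The main obstacle is precisely the scale mismatch identified in the second paragraph. Because the Michaelis--Menten nonlinearity $f_\eps$ has slope $\Theta(1/\eps)$ near its equilibrium $v^+_\eps$, the $O(\eps)$ boundary data on $\partial U_{-\delta}$ cannot by itself yield the sharp $O(\eps)$ estimate on the reaction-side difference demanded by \eqref{estimate1}--\eqref{estimate2}: the comparison must be performed at the $\eps^2$ level in the interior. The spatially varying cutoff $\phi$ interpolating between scales $\eps$ on $U_{-2\delta}$ and $1$ on $\partial U_{-\delta}$ is what reconciles the two, at the cost of a $C_\delta$ dependence entering through $|\Delta\phi| \sim \delta^{-2}$.
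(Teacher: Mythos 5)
Your overall structure is on the right track: identify the stationary Michaelis--Menten equilibrium $v^+_\eps$, compare $u_\eps$ against a supersolution built on top of it, and extract \eqref{estimate1} from the $O(\eps^2)$ interior closeness of $u_\eps$ to $v^+_\eps$. The paper does exactly the same conceptual reduction (its $\bar u_\eps(\cdot,0)$ is your $v^+_\eps$). But there is a genuine flaw in the barrier $\Psi^+ = v^+_\eps + L_1\eps\phi$ with a generic cutoff $\phi$ of transition width $\sim\delta$: that $\Psi^+$ is \emph{not} a supersolution in the transition annulus.

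To see this, note first that the displayed claim
$(1-g)[f_\eps(\Psi^+)-f_\eps(v^+_\eps)] \ge c_{g,\theta}L_1\eps\phi$ is false once $L_1\eps\phi\gtrsim\eps$, because the left side saturates: using your identity with $s=L_1\eps\phi$,
\begin{equation*}
  f_\eps(\Psi^+)-f_\eps(v^+_\eps) = \frac{\eps\,L_1\eps\phi}{(v^+_\eps+\eps)(v^+_\eps+L_1\eps\phi+\eps)} \le \frac{\eps}{v^+_\eps+\eps}\le 1\,,
\end{equation*}
uniformly in $L_1$. More importantly, in the collar near $\partial U_{-2\delta}$ where $\phi$ just begins to rise from the value $\eps$, you have $\phi\approx\eps$ but $|\Delta\phi|\sim\delta^{-2}$, and by the maximum principle $\Delta\phi$ must be positive somewhere there (since $\phi$ attains its interior minimum $\eps$ on $U_{-2\delta}$). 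At such a point the reaction gain is of order $(1-g)\,\frac{L_1\eps^3}{\eps^2}\sim L_1\eps$, whereas $-\Delta\Psi^+ \sim -L_1\eps\,|\Delta\phi| \sim -L_1\eps/\delta^2$. Both sides scale linearly in $L_1$, so no choice of $L_1(\delta)$ can restore $\partial_t\Psi^+-\Delta\Psi^++(1-g)f_\eps(\Psi^+)-\alpha_\eps g\ge 0$ for $\delta<1$. The singular perturbation sets the natural boundary-layer width at $\sqrt{\eps}$, not $\delta$: a static barrier must decay like $e^{-c\,d(x,\partial U_{-\delta})/\sqrt{\eps}}$ so that $|\Delta\phi|\sim\phi/\eps$ matches, rather than overwhelms, the linearized reaction coefficient $\sim 1/\eps$.

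The paper sidesteps this by a two-step device. It first defines the time-dependent supersolution $\bar u_\eps$ as the \emph{solution} of the PDE on $U_{-\delta}$ with boundary data $L_0\eps$ and initial data $v^+_\eps$ (so the supersolution property and the boundary matching are automatic, with no hand-built cutoff); it then proves separately, in Step~2, that $\bar u_\eps$ stays within $C\eps^2$ of $v^+_\eps$ on the smaller set $U_{-2\delta}$. That second step reduces, after the rescaling $U_\eps=\bar u_\eps/\eps$, to an interior bound $w_\eps\le\Lambda\eps$ for a singularly perturbed elliptic problem $-\eps\Delta w_\eps+\mu w_\eps=\eps C_0$, established by a localized test-function argument: the function $\vartheta=\eps\Lambda/(2C_1)+\varphi^2$ is tuned so that $|\nabla\vartheta|^2/\vartheta$ is bounded by $C(\delta)$, which lets the lower-order terms be absorbed by the $\mu/\eps$ coefficient once $\Lambda$ is large. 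That absorption, made possible by the $\eps$-level floor in $\vartheta$, is precisely the mechanism your generic $\phi$ lacks. If you replace your cutoff with the solution of the corresponding singularly perturbed linear elliptic equation (or with an explicit exponential profile of width $\sqrt{\eps}$), your strategy should close; as written it does not.
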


\begin{proof}
	We are going to  prove \eqref{estimate1}, the proof of \eqref{estimate2} goes analogously.

	{\bf Step 1:} We first construct a suitable supersolution by defining
	${\bar u}_{\eps}\colon U_{-\delta} \times [0,T_\eps] \to \R_+$  via
	\begin{align}
  	\partial_t {\bar u}_{\eps} - \Delta {\bar u}_{\eps}&= -(1-g) f_{\eps}\big( {\bar u}_{\eps})
  	+ (\alpha_0+\eta) g\,, \qquad &\text{ in } U_{-\delta} \times (0,T_\eps) \label{baru1}\\
  	{\bar u}_{\eps}(\cdot,0)&= \eps \frac{(\alpha_0+\eta)g}{(1-g)-(\alpha_0+\eta)g} \qquad &\text{ in }U_{-\delta}
  	\,, \label{baru2}\\
  	{\bar u}_{\eps}&=L_0 \eps \qquad \qquad\qquad \qquad &\text{ on } \partial U_{-\delta} \times (0,T_\eps)\,.\label{baru3}
	\end{align}
	Indeed, for $w_{\eps}:= u_{\eps} - {\bar u}_{\eps}$ we find, due to \eqref{assum1}, that
	\begin{align*}
  	\partial_t w_{\eps}-\Delta w_{\eps} & \leq - (1-g) \big(f_{\eps}(u_{\eps}) - f_{\eps} ({\bar u}_{\eps})\big)\\
  	& = - (1-g) \frac{\eps}{(\eps+u_\eps)(\eps+\bar u_\eps)} w_{\eps}\;.
	\end{align*}
	Furthermore, on $U_{-\delta}$ it holds
	\begin{equation*}
  	u_{\eps}(\cdot, 0) \leq \eps \frac{(\alpha_0+\eta)g}{(1-g) - (\alpha_0+\eta)g} = {\bar u}_{\eps}(\cdot, 0)\,,
	\end{equation*}
	while on $\partial U_{-\delta} \times [0,T_\eps]$ we have $w_{\eps} \leq 0$ by Corollary \ref{L.inclusion}.
	This yields $w_{\eps} \leq 0$, hence
	\begin{equation}
  	u_\eps \leq {\bar u}_\eps\quad\text{ in }U_{-\delta}\times [0,T_\eps].
  	\label{3eq:1007}
	\end{equation}

	{\bf Step 2:} We next show that for some $C=C(\delta,L_0)$ it holds
	\begin{equation}\label{baruest}
  	|{\bar u}_{\eps}(x,t) - {\bar u}_{\eps}(x,0)| \leq C\eps^2 \qquad \text{ in }
  	U_{-2\delta} \times [0,T_{\eps}]\,.
  	\end{equation}
	Further we consider $U_\eps^0:=\frac{1}{\eps}{\bar u}_0^{\eps}$, $U_\eps:=\frac{1}{\eps}{\bar u}_{\eps}$ and $v_\eps:=U_\eps - U_\eps^0$.
	We compute, with $f(u)=\frac{u}{u+1}$, that
	\begin{align*}
  	\partial_t v_{\eps} - \Delta v_{\eps} &= - \frac{1}{\eps} (1{-}g)f\big( U_{\eps}\big) +
  	\frac{1}{\eps} (\alpha_0+\eta)g
  	- \Delta U_\eps^0(x)\\
  	&= - \frac{1}{\eps} (1-g) \big( f(U_{\eps})-f(U_\eps^0)\big) - \Delta U_\eps^0(x)\\
  	& = - \frac{1}{\eps}(1-g) \frac{1}{(1+U_\eps)(1+U_\eps^0)} v_{\eps} -\Delta U_\eps^0(x)\,.
	\end{align*}
	We first notice that, since $|\Delta U_\eps^0|\leq C_0$ for some $C_0=C_0(g)$, we have that $|v_\eps|$ and then also $U_{\eps}$ are uniformly bounded in $\eps$ by some constant only depending on $\delta, L_0$.
	Hence, we have
	\begin{equation}
  	\partial_t v_{\eps} - \Delta v_{\eps} \leq -\frac{\mu}{\eps}v_\eps + C_0
	\end{equation}
	for some $\mu=\mu(\delta,L_0)>0$.

	We compare $v_\eps$ with the solution of the boundary value problem
	\begin{equation}
  	- \eps\Delta w_{\eps} = -\mu w_\eps + \eps C_0\quad\text{ in }U_{-\delta},\qquad
  	w_{\eps} = v_\eps \quad\text{  on }\partial U_{-\delta}.
  	\label{3eq:1003}
	\end{equation}
	By applying maximum principles we find that in $U_{-\delta}$ for some $C_1=C_1(g,L_0)$
	\begin{equation}
  	0\leq w_\eps\leq C_1,\qquad v_\eps(\cdot,t) \leq w_\eps\quad\text{ for all }0<t<T_\eps.
  	\label{3eq:1006}
	\end{equation}
	We next claim that there exists $\Lambda=\Lambda(\mu,\delta)$ such that
	\begin{equation}
  	w_\eps \leq \Lambda\eps \quad\text{ in }U_{-2\delta}.
  	\label{3eq:1002}
	\end{equation}
	To prove this estimate consider an arbitrary $\Lambda>0$ (to be chosen sufficiently large later) and assume by contradiction that $w_\eps(x_0)>\Lambda\eps$ for some $x_0\in U_{-2\delta}$.

	Let $\vartheta:= \eps \frac{\Lambda}{2C_1}+\varphi^2$ where $\varphi\in C^\infty_c(B_{\delta}(x_0))$ is chosen such that
	\begin{equation*}
  	\vartheta(x_0)=1,\quad \vartheta\leq 1,\quad \|\vartheta\|_{C^2(B_{\delta}(x_0))}\leq C(\delta).
	\end{equation*}
	Observe that $\vartheta\geq  \eps \frac{\Lambda}{2C_1}$ and
	\begin{equation}
  	\frac{|\nabla\vartheta|^2}{\vartheta} \leq \frac{4\varphi^2|\nabla\varphi|^2}{\eps \frac{\Lambda}{2C_1}+\varphi^2} \leq C(\delta)
  	\label{3eq:1004}
	\end{equation}
	for some $C(\delta)$ independent of $\eps,\Lambda$.
	Furthermore, we compute
	\begin{equation*}
  	\nabla(\vartheta w_\eps)=\vartheta\nabla w_\eps + w_\eps\nabla \vartheta,\qquad
  	\Delta (\vartheta w_\eps) = \vartheta \Delta w_\eps + 2\nabla\vartheta\cdot\nabla w_\eps +w_\eps\Delta\vartheta,
	\end{equation*}
	and deduce from \eqref{3eq:1003}
	\begin{align*}
  	C_0\vartheta &= -\Delta(\vartheta w_\eps) +2\nabla\vartheta\cdot\nabla w_\eps+w_\eps\Delta\vartheta +\frac{\mu}{\eps} (\vartheta w_\eps)\\
  	&= -\Delta(\vartheta w_\eps) +\frac{2}{\vartheta}\nabla\vartheta\cdot\nabla(\vartheta w_\eps) +\Big(\frac{\mu}{\eps}-\frac{2|\nabla\vartheta|^2}{\vartheta^2}+\frac{1}{\vartheta}\Delta\vartheta\Big)(\vartheta w_\eps).
	\end{align*}
	Using \eqref{3eq:1004} yields for all $\Lambda\geq \Lambda_0$, $\Lambda_0=\Lambda_0(\delta,C_1,\mu)$
	\begin{align*}
  	-\frac{2|\nabla\vartheta|^2}{\vartheta^2}+\frac{1}{\vartheta}\Delta\vartheta
  	& \geq -\frac{C(\delta)}{\vartheta}\geq -\frac{2C_1C(\delta)}{\eps\Lambda} \geq -\frac{\mu}{2\eps},
	\end{align*}
	hence
	\begin{align}
  	C_0\vartheta &\geq -\Delta(\vartheta w_\eps)  +\frac{2}{\vartheta}\nabla\vartheta\cdot\nabla(\vartheta w_\eps) +\frac{\mu}{2\eps}(\vartheta w_\eps).
  	\label{3eq:1005}
	\end{align}
	By the choice of $\vartheta$ we have $(\vartheta w_\eps)(x_0)>\Lambda\eps>(\vartheta w_\eps)|_{\partial B(x_0,\delta)}$.
  Hence $\vartheta w_\eps$ attains an interior maximum at some $x_1\in B(x_0,\delta)$.
	Evaluating \eqref{3eq:1005} we deduce that
	\begin{align*}
  	C_0  \geq C_0 \vartheta(x_1) &\geq \frac{\mu}{2\eps}(\vartheta w_\eps)(x_1) \geq \frac{\mu}{2\eps}(\vartheta w_\eps)(x_0) > \frac{\mu\Lambda}{2},
	\end{align*}
	a contradiction for $\Lambda\geq \Lambda_*$, where  $\Lambda_*=\Lambda_*(C_0,C_1,\mu,\delta)$ only depends on $\delta$ and $L_0$.

	This completes the proof of \eqref{3eq:1002}.

	{\bf Step 3:} Using \eqref{3eq:1007}, \eqref{baru2}, \eqref{baruest} and the monotonicity and continuity of $f$ we finally obtain
	\begin{align*}
  	f_\varepsilon(u_\varepsilon) (1-g) - (\alpha_0+ \eta) g
  	&\leq f_\varepsilon(\bar u_\varepsilon) (1-g) - (\alpha_0+ \eta) g\\
  	&= f_\varepsilon(\bar u_\varepsilon^0) (1-g) - (\alpha_0+ \eta) g +
  	\frac{\eps(\bar u_\eps -\bar u_\eps^0)}{(\eps+\bar u_\eps)(\eps+\bar u_\eps^0)}(1-g)\\
  	&\leq C(\delta,L_0)\eps.
	\end{align*}
	Since the choice of $L_0$ only depends on $g,\theta$ this proves \eqref{estimate1}.
\end{proof}

The following proposition plays a crucial role in proving Theorem \ref{3thm:main1}.
It states that we can choose $T_\eps>0$ in \eqref{assum1} independent of $\eps$.
The key idea in this proof is to consider the maximal time interval where \eqref{assum1} is satisfied and show that it could be extended unless it contains and interval $[0,\bar t]$, with $\bar t$ independent of $\eps$.

\begin{proposition}\label{P.main1}
Let $\theta>0$ as in \eqref{eq:nondegeneracy1} and  $0<\eta<\frac{\theta}{4 \max g}$.
Then, for sufficiently small $\eps>0$, there exists $\bar t:=\bar t(\eta,\theta,u_0,g)>0$ that is independent of $\eps$ such that
\begin{equation*}
  |\alpha_{\eps}(t)-\alpha_0| \leq \eta \quad \text{for all } t \in [0,\bar t]\;.
\end{equation*}
In particular, we can choose $T_\eps=\bar t$ in Corollary \ref{L.inclusion} independent of $\eps>0$.
\end{proposition}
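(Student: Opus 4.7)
The plan is to run a bootstrap on the maximal time of validity of \eqref{assum1}. I would set
\[
  T^*_\eps := \sup\big\{s \in [0,T] : |\alpha_\eps(\tau) - \alpha_0| \leq \eta \text{ for all } \tau \in [0,s]\big\}\;.
\]
Since $\alpha_\eps$ is continuous with $\alpha_\eps(0)\to\alpha_0$, we have $T^*_\eps>0$ for small $\eps$, and if $T^*_\eps<T$ then necessarily $|\alpha_\eps(T^*_\eps)-\alpha_0|=\eta$. The goal is to show that, for $\delta>0$ chosen sufficiently small (depending only on $\eta,u_0,g$), the value $\bar t:=t^*(\delta)$ from \eqref{tstardef} satisfies $T^*_\eps\geq \bar t$ for all sufficiently small $\eps$. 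Suppose for contradiction that $T^*_\eps<\bar t$; then on $[0,T^*_\eps]$ both Corollary \ref{L.inclusion} and Lemma \ref{L.alphepsapprox} apply.

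The centerpiece of the argument is the identity (obtained from $\alpha_0\int_V g\,dS = \int_V(1-g)\,dS$)
\[
  (\alpha_\eps(T^*_\eps)-\alpha_0)\int_\Gamma g\,dS = \int_U\big[(1-g)f_\eps(u_\eps)-\alpha_0 g\big]\,dS + \int_V (1-g)\big[f_\eps(u_\eps)-1\big]\,dS\;,
\]
which separates the contribution of the vanishing set $U$ (where the bootstrap hypothesis yields a strict gain) from that of the positive set $V$ (which is a small perturbation for short times). I would then bound each integral after splitting $U = U_{-2\delta}\cup(U\setminus U_{-2\delta})$ and $V=V^\delta\cup(V\setminus V^\delta)$. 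On $U_{-2\delta}$, Lemma \ref{L.alphepsapprox} gives the pointwise bound $|(1-g)f_\eps(u_\eps)-\alpha_0 g|\leq \eta g + C_\delta\eps$. On $U\setminus U_{-2\delta}$ the integrand is uniformly bounded, and the measure of this collar is controlled via \eqref{eq:nondegeneracy2a}, using the inclusion $U\setminus U_{-2\delta}\subset(\{u_0>0\})_{+2\delta}\setminus(\{u_0>0\})_{-2\delta}$. On $V^\delta=\{u_0\geq\delta\}$, Lemma \ref{L.uniformestimate} together with $t\leq t^*(\delta)$ ensures $u_\eps\geq \delta/2$, hence $|f_\eps(u_\eps)-1|\leq 2\eps/\delta$; while $|V\setminus V^\delta|=|\{0<u_0<\delta\}|\to 0$ as $\delta\to 0$ by dominated convergence.

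Combining these bounds produces an estimate of the form
\[
  |\alpha_\eps(T^*_\eps)-\alpha_0| \leq \eta\,\kappa + \omega_1(\delta) + \omega_2(\delta,\eps)\;,\qquad \kappa := \frac{\int_U g\,dS}{\int_\Gamma g\,dS}\;,
\]
with $\omega_1(\delta)\to 0$ as $\delta\to 0$ and $\omega_2(\delta,\eps)\to 0$ as $\eps\to 0$ for each fixed $\delta$. The crucial observation is that $\kappa<1$ strictly, since $|\{u_0>0\}|>0$ by \eqref{initialdata} and $g\geq g_0>0$. Choosing first $\delta$ so small that $\omega_1(\delta)<\tfrac{\eta}{3}(1-\kappa)$, and then $\eps_0=\eps_0(\delta)$ so that $\omega_2(\delta,\eps)<\tfrac{\eta}{3}(1-\kappa)$ for $\eps\leq\eps_0$, yields $|\alpha_\eps(T^*_\eps)-\alpha_0|<\eta$, contradicting the maximality of $T^*_\eps$. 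Hence $T^*_\eps\geq\bar t$, and in Corollary \ref{L.inclusion} one can take $T_\eps=\bar t$ independently of $\eps$.

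The principal difficulty is the interplay of the two nondegeneracy hypotheses: \eqref{eq:nondegeneracy1} enters only through Lemma \ref{L.alphepsapprox} on the interior set $U_{-2\delta}$, whereas \eqref{eq:nondegeneracy2a} is indispensable for taming the collar $U\setminus U_{-2\delta}$. Without the smallness of this collar, the strict-factor mechanism $\kappa<1$ driving the bootstrap would be buried under an $O(1)$ error. The bookkeeping in the order of choosing $\delta$ first and then $\eps$ is routine once this structural separation is identified.
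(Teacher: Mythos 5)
Your proof is correct and follows essentially the same route as the paper: define the maximal validity time of \eqref{assum1}, use Corollary \ref{L.inclusion} and Lemma \ref{L.alphepsapprox} on that interval, decompose $\Gamma$ into $U_{-2\delta}$, the collar $U\setminus U_{-2\delta}$ (controlled via \eqref{eq:nondegeneracy2a}), $V^\delta$, and $V\setminus V^\delta$, and exploit the strict factor coming from $\int_U g < \int_\Gamma g$ to close the bootstrap. Your explicit identity
\[
  (\alpha_\eps(t)-\alpha_0)\int_\Gamma g\,dS=\int_U\bigl[(1-g)f_\eps(u_\eps)-\alpha_0 g\bigr]\,dS+\int_V(1-g)\bigl[f_\eps(u_\eps)-1\bigr]\,dS
\]
is a cleaner repackaging of the chain of estimates \eqref{hypothesi1}--\eqref{aporroia}, your $\kappa$ is the paper's $1-\kappa$, and you phrase the extension argument as a direct contradiction rather than deriving the strict bound \eqref{eq:kappa} first and then contradicting maximality of $\tilde T_\eps$, but these are cosmetic. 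The only minor gloss is the claim $u_\eps\geq\delta/2$ on $V^\delta$ for $t\leq t^*(\delta)$: this requires $C_1 t^{*(1+\beta)/2}\leq\delta/2$, which holds after possibly shrinking $t^*(\delta)$ (the paper notes this explicitly); since that only affects the definition of $\bar t$, the argument is unaffected.
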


\begin{proof}
 {\bf Step 1:}
 We observe that Lemma \ref{L.alphepsapprox} yields that there exists $r_1 \in [-\eta, \eta]$ such that
\begin{equation}\label{hypothesi1}
  \int_{U} (1-g) f_{\eps} \big (u_{\eps}(\cdot,t) \big)\,dS = \big(\alpha_0+r_1 \big) \int_U g\,dS\;+ O(C_\delta \eps) + \omega(\delta),
\end{equation}where $\omega(\delta) \to 0$ as $\delta \to 0$.
Moreover, by \eqref{reg3} we have that $u^\eps_0 \geq u_0$ on $\Gamma$.
Using this and Lemma \ref{L.uniformestimate} and possibly passing to a smaller value of $t^*(\delta)$, we obtain for all $t \leq t^*(\delta)$ that
\begin{equation}\label{hypothesis2}
  \int_{V^\delta} (1-g) f_{\eps} \big (u_{\eps}(\cdot,t)\big)\,dS=\int_{V^\delta}(1-g)\,dS + O\Big( \frac{\eps}{\delta}\Big)\;.
\end{equation}
Moreover, using \eqref{eq:nondegeneracy2a}, \eqref{gassumptions} and \eqref{MM term} we deduce
\begin{equation}\label{aporroia}
 0\leq \int_{\Gamma \backslash (U\cup V^\delta)} (1-g) f_{\eps} \big (u_{\eps}(\cdot,t)\big)\,dS  \leq \vert \Gamma \backslash (U\cup V^\delta) \vert\,\to\, 0\,, \;\text{ as } \delta \to 0.
\end{equation}
Therefore we can write, combining \eqref{hypothesi1}, \eqref{hypothesis2},
\begin{align*}
  \alpha_{\eps}(t) \int_{\Gamma} g\,dS& =  \int_{V^\delta} (1-g) f_{\eps} \big (u_{\eps}(\cdot,t)\big)\,dS +
  \int_{U} (1-g) f_{\eps} \big (u_{\eps}(\cdot,t) \big)\,dS \\
  &\qquad\qquad + \int_{\Gamma \backslash (U\cup V^\delta)} (1-g) f_{\eps} \big (u_{\eps}(\cdot,t)\big)\,dS\\
  &=\int_{V^\delta}(1-g)\,dS + O\Big( \frac{\eps}{\delta}\Big) + \big(\alpha_0+r_1 \big) \int_U g\,dS + r_2\;,
\end{align*}
with $\vert r_2 \vert  \leq C_\delta \eps +\omega(\delta) $.
This estimate can be further simplified as follows
\begin{align*}
  \alpha_{\eps}(t) \int_{\Gamma} g\,dS& =
  \int_{V}(1-g)\,dS + \big(\alpha_0+r_1\big) \int_U g\,dS + r_3\\
   &= \alpha_0\int_{\Gamma\backslash U} g\,dS + \alpha_0\int_{U}g\,dS
   +r_1 \int_U g\,dS + r_3\\
   &=\alpha_0\int_{\Gamma} g\,dS +r_1 \int_U g\,dS + r_3\,,
\end{align*}
where $\vert r_3 \vert \leq r_2+O\Big( \frac{\eps}{\delta}\Big)+\vert V\setminus V^\delta \vert$.
Thus, we obtain for all $0 \leq t \leq T_\eps$
\begin{equation*}
 \big| \alpha_{\eps}(t) - \alpha_0\big| \leq \kappa \vert r_1 \vert  + C\vert r_3 \vert \leq (1-\kappa) \eta+C\omega(\delta)+C_\delta \eps
\end{equation*}
with $\kappa := \frac{\int_{\{u_0>0\}}g\,dS}{\int_{\Gamma} g\,dS}>0$.
This is the key estimate in order to complete the proof.

\medskip
Indeed, let us first fix $\delta$ depending on $\eta$ such that $C\omega(\delta)\leq \frac{\kappa}{4}\eta$ and then $\eps$ depending on $\delta$ sufficiently small such that $C_\delta \eps\leq \frac{\kappa}{4}\eta$.
Then it holds
\begin{equation}
 \big| \alpha_{\eps}(t) - \alpha_0\big| \leq \Big(1-\frac{\kappa}{2}\Big)\eta
 \quad\text{ for all }0 \leq t \leq T_\varepsilon.
\label{eq:kappa}
\end{equation}
Next, define
\begin{equation*}
 \tilde T_\eps:=\sup \big\{0\leq s\leq t^*(\delta)\,:\, |\alpha_{\eps}(t)-\alpha_0| \leq \eta \; \text{ for all } t \in [0,s]\big\}\,.
\end{equation*}
If $\tilde T_\eps<t^*(\delta)$ the continuity of $\alpha_\eps$ and \eqref{eq:kappa} imply that $|\alpha_{\eps}(t)-\alpha_0| \leq \eta$ holds on an interval $[0,T^\dagger]$ with $\tilde T_\eps<T^\dagger\leq t^*(\delta)$, a contradiction to the definition of $\tilde T_\eps$.

Thus, setting $\bar t:=\bar t(\eta)=t^*(\delta)$, we obtain that $\vert \alpha_{\eps}(t)-\alpha_0 \vert \leq \eta$ in $[0,\bar t]$.
\end{proof}

Proposition \ref{P.main1} implies that there exists a time $\bar t>0$ which is independent of
$\eps$ such that \eqref{assum1} and hence \eqref{nondegeneracy4} holds for all $t \in [0,\bar t]$.

\subsection{Proofs of Theorem \ref{3thm:main1} and Theorem \ref{1thm:main1}}

With the uniform continuity result stated in Proposition \ref{P.main1} we can now prove Theorem \ref{3thm:main1} by passing to the limit $\eps \to 0$.

\begin{proof}[Proof of Theorem \ref{3thm:main1}]
Recalling \eqref{eq:obalph3} and \eqref{alphabound}, we estimate
 \begin{align}
  |\alpha(t)-\alpha_0|&= \bigg| \frac{\int_{\{u(\cdot,t)>0\}} (1-g)\,dS}{\int_{\{u(\cdot,t)>0\}}g\,dS}
  - \frac{\int_{\{u_0>0\}} (1-g)\,dS}{\int_{\{u_0>0\}}g\,dS}\bigg| \notag \\
 & \leq \frac{1}{\int_{\{u_0>0\}} g\,dS} \Big| \int_{\{u_0>0\}} (1-g)\,dS - \int_{\{u(\cdot,t)>0\}}
  (1-g)\,dS \Big| \notag \\
  &\qquad
  + \frac{\alpha(t)}{\int_{\{u_0>0\}} g\,dS}
  \Big| \int_{\{u(\cdot,t)>0\}} g\,dS -
  \int_{\{u_0>0\}}g\,dS \Big| \notag \\
  & \leq C \big |\{u_0>0\}\Delta \{ u(\cdot,t)>0\} \big |
  \label{basic1}
 \end{align}
Then it is sufficient to show that for all sufficiently small $\delta>0$ there exists $t^\dagger(\delta)>0$ such that
\begin{equation}\label{inclusions2}
 V^\delta \subset \{u(\cdot,t)>0\} \subset  \big(U_{-\delta}\big)^{\mathsf{c}}
 \quad\text{ for all }0<t<t^\dagger(\delta)\;,
\end{equation}
where $V^\delta,U_{-\delta}$ are given by \eqref{Ud,Vd}.

\medskip
Indeed, by \eqref{U,V defn} we observe that
\begin{equation*}
  V^\delta \subset V \subset \big(U_{-\delta}\big)^{\mathsf{c}}\;.
\end{equation*}
This in turn yields that for all $0<t<t^\dagger(\delta)$
\begin{equation}
  \{u(\cdot,t)>0 \} \Delta \{u_0>0 \}  \subset \big(U_{-\delta}\big)^{\mathsf{c}} \setminus  V^\delta\;.
  \label{eq:basic2a}
\end{equation}
Using Lemma \ref{-delta set}, we infer that for any $\delta>0$ we can choose $\bar \delta:=\bar \delta(\delta)\geq\delta$ with $\bar \delta \to 0$ as $\delta \to 0$ such that $V_{-\bar\delta}\subset V^\delta$.
By Lemma \ref{delta-sets-complement} it holds $\big(U_{-\delta}\big)^{\mathsf{c}}\subset V_{+\bar \delta}$, which yields
\begin{equation}\label{basic2}
  \big(U_{-\delta}\big)^{\mathsf{c}} \setminus  V^\delta \subset V_{+\bar \delta} \setminus  V_{-\bar \delta}\;.
\end{equation}
Moreover, we deduce by \eqref{inclusions2} that
\begin{equation}\label{inclusions3}
 V_{-\bar \delta} \subset \{u(\cdot,t)>0\} \subset  V_{+\bar \delta}
 \quad\text{ for all }0<t<t^\dagger(\delta)\;.
\end{equation}
Finally, \eqref{eq:basic2a}, \eqref{basic2}, the convergence $\bar \delta \to 0$ as $\delta \to 0$ and \eqref{eq:nondegeneracy2a} yield that
\begin{align}
  &| \{u(\cdot,t)>0 \} \Delta \{u_0>0 \} | \leq | \{u_0>0\}_{+\bar \delta} \setminus  \{u_0>0\}_{-\bar \delta} |
  \quad\text{ for all }0<t<t^\dagger(\delta)\;,
  \label{eq:L1-conv-supp1}\\
  &| \{u_0>0\}_{+\bar \delta} \setminus  \{u_0>0\}_{-\bar \delta} | \to 0 \quad \text{as } \delta \to 0\;.
  \label{eq:L1-conv-supp2}
\end{align}
% \begin{equation*}
%   \sup_{0<t<t^\dagger(\delta)}| \{u(\cdot,t)>0 \} \Delta \{u_0>0 \} | \leq | \{u_0>0\}_{+\bar \delta(\delta)} \setminus  \{u_0>0\}_{-\bar \delta(\delta)} | \to 0 \quad \text{as } \delta \to 0\;.
% \end{equation*}
Therefore, for given $\eta>0$, we can choose a sufficiently small $0<\delta\leq\eta$, $\delta=\delta(\eta)$, such that the right-hand side of \eqref{basic1} is less or equal than $\eta$ for all $0<t<\bar{t}(\eta)$, with $\bar{t}(\eta)=t^\dagger(\delta)$.
Thus, we obtain \eqref{continuityy}.
Moreover, by \eqref{inclusions3} and $\delta\leq\eta$ we obtain \eqref{inclusions1}.

\medskip
This proves that if \eqref{inclusions2} holds, then Theorem \ref{3thm:main1} follows.
To this end, we now proceed to the proof of \eqref{inclusions2}.

\medskip
On the one hand, by Corollary \ref{L.inclusion} and Proposition \ref{P.main1} we obtain that for any $\delta >0$ and for any $\eps=\eps(\delta)$ sufficiently small, $U_{-\delta} \subset \{u_\eps \leq L_0\eps \}$ for all $t\in [0,t^*(\delta)]$.
This in particular yields that for any $x \in U_{-\delta}$ and for all $0\leq t \leq t^*(\delta)$, $u_\eps(x,t) \leq L_0\eps$.
Due to the uniform convergence $u_\eps \to u$, we conclude that $U_{-\delta} \subset \{u(\cdot,t)=0 \}$ for all $t$ in $[0,t^*(\delta)]$.
Taking the complements, the right inclusion follows.

On the other hand, $x\in V^\delta$ implies $u_0(x)\geq \delta$ and by uniform convergence $u(\cdot,t)\to u_0$, see Section \ref{Tools}, and after possibly passing to a lower value of $t^*(\delta)$, we have that $u(x,t)>0$ for all $t\leq t^*(\delta)$.
Thus $V^\delta \subset \{u(\cdot,t)>0\}$ for all $t\leq t^*(\delta)$.

Hence, choosing $t^\dagger(\delta)=t^*(\delta)$ we deduce \eqref{inclusions2}.
\end{proof}

\begin{proof}[Proof of Theorem \ref{1thm:main1}]
The right continuity of $\lambda$ at $t=0$ follows directly from \eqref{continuityy} and the relation \eqref{alphaAverage}.

The right-continuity of the support in the sense of $L^1$-convergence of the associated characteristic functions is a consequence of \eqref{eq:L1-conv-supp1}, \eqref{eq:L1-conv-supp2}.
To also prove the Hausdorff-convergence of supports we argue by contradiction.
Assume
\begin{equation}
  \Hdist(\{u(\cdot,t)>0\},\{u_0>0\})\centernot\longrightarrow 0\quad\text{ as }t\searrow 0.
  \label{eq:notHausdorff}
\end{equation}
Then there exists $\eps>0$ and a sequence $(t_k)_k$ of real numbers with $t_k\searrow 0$ as $k\to\infty$ such that
\begin{equation*}
  \Hdist(\{u(\cdot,t_k)>0\},\{u_0>0\})>\eps \quad\text{ for all }k\in\N.
\end{equation*}
By \eqref{inclusions1} we have $\{u(\cdot,t_k)>0\}\subset (\{u_0>0\})_{+\eps}$ for all sufficiently large $k\in\N$.
By \eqref{eq:notHausdorff}, without loss of generality for the whole sequence,
\begin{equation}
  \{u_0>0\}) \not\subset (\{u(\cdot,t_k)>0\})_{+\eps}
  \quad\text{ for all }k\in\N\;.
  \label{eq:notHausd}
\end{equation}
This yields the existence of a sequence $(x_k)_k$ in $\{u_0>0\}$ with
\begin{equation*}
  x_k\not\in (\{u(\cdot,t_k)>0\})_{+\eps}
  \quad\text{ for all }k\in\N\;.
\end{equation*}
For a subsequence (not relabled) and a point $x_0\in \overline{\{u_0>0\}}$ we have $x_k\to x_0$ and deduce
\begin{equation}
  B(x_0,\frac{\eps}{2})\cap \{u(\cdot,t_k)>0\} = \emptyset
  \quad\text{ for all }k\gg 1\;.
  \label{eq:notHausd2}
\end{equation}
Since $x_0\in \overline{\{u_0>0\}}$ there exists $x_*\in B(x_0,\frac{\eps}{2})$ with $\delta_*:= u_0(x_*)>0$.

Then, using \eqref{inclusions2} implies that
\begin{equation*}
  x_*\in \{u_0\geq\delta_*\} \subset \{u(\cdot,t_k)>0\}
  \quad\text{ for all }k\gg 1\;,
\end{equation*}
a contradiction to \eqref{eq:notHausd2}.
\end{proof}

A posteriori we even obtain a quantitative growth bound for the support of solutions.
\begin{corollary}\cite{BF76,EK79}
\label{cor:growthbound}
Let the assumptions of Theorem \ref{1thm:main1} hold.
Then
\begin{equation}
  \{u(\cdot,t)>0\} \subset (\{u_0>0\})_{C_2\sqrt{t|\log t|}}
  \label{eq:growthbound}
\end{equation}
for all $t>0$ sufficiently small.
\end{corollary}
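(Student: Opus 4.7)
The plan is to reduce the statement to a direct application of the classical Evans--Knerr growth estimate \cite[Theorem 3.2]{EK79} by leveraging the continuity of the Lagrange multiplier $\lambda$ at $t=0$ provided by Theorem \ref{1thm:main1}. Once $\lambda(t)\to\lambda_0$ is known, the right-hand side $-1+g/\lambda(t)$ of \eqref{eq:oblam3} satisfies a classical uniform negativity bound in a neighborhood of the initial free boundary, and the standard barrier construction carries through.

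Concretely, I would first fix $\theta>0$ so that \eqref{eq:nondegeneracy1-lambda} holds in the uniform form $1-g/\lambda_0\geq\theta$ on $\{u_0=0\}$, which is possible by compactness of $\{u_0=0\}$ and continuity of $g$. By Theorem \ref{1thm:main1}, choosing $\bar t_1>0$ sufficiently small ensures $|\lambda(t)-\lambda_0|$ is as small as we please for all $t\in[0,\bar t_1]$; combined with uniform continuity of $g$, one obtains $\sigma>0$ and $\nu>0$ such that
\[
  1-\frac{g(x)}{\lambda(t)}\;\geq\;\nu \qquad\text{for all } x\in(\{u_0=0\})_{+\sigma},\ t\in[0,\bar t_1].
\]
Shrinking $\bar t_1$ if necessary via the Hausdorff-convergence part of Theorem \ref{1thm:main1}, we also have $\{u(\cdot,t)>0\}\subset(\{u_0>0\})_{+\sigma/2}$ for $t\in[0,\bar t_1]$, so any point of the support lying outside $\{u_0>0\}$ belongs to the region where $f(\cdot,t):=-1+g/\lambda(t)\leq -\nu$.

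Next, I would invoke \cite[Theorem 3.2]{EK79} locally at each boundary point $x_0\in\partial\{u_0>0\}$: in a coordinate chart where $-\Delta_\Gamma$ is a uniformly elliptic operator in divergence form with $C^2$-coefficients (cf.\ Remark \ref{LB explained}), the Evans--Knerr logarithmic barrier argument is purely local and requires only boundedness of $f$ together with the upper bound $f\leq -\nu<0$ in a neighborhood of the free boundary, both of which have just been secured on $(\{u_0=0\})_{+\sigma}$. A finite cover of the compact set $\partial\{u_0>0\}$ by such charts, plus taking the maximum of the resulting constants, produces a uniform $C_2>0$ for which \eqref{eq:growthbound} holds for all sufficiently small $t$.

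The genuine obstacle, which Theorem \ref{1thm:main1} has already overcome, is precisely the \emph{a posteriori} propagation of the classical nondegeneracy condition $f\leq -\nu<0$ from $t=0$ to positive times: without it one cannot even write down the logarithmic barrier used in \cite{EK79}. Given the continuity of $\lambda$ at $0$, the work that remains is routine, namely the covering argument on $\Gamma$ and the transcription of the Euclidean Evans--Knerr barrier to a coordinate chart on the surface.
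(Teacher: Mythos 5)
Your proposal is correct and follows essentially the same route as the paper: use Theorem \ref{1thm:main1} to propagate the uniform negativity $f=-1+g/\lambda\leq-\nu<0$ to a space-time neighborhood of the zero set for small times, then apply the Evans--Knerr barrier argument from \cite[Theorem 3.2]{EK79}. The paper states this more tersely, while you usefully spell out the covering/chart step, but the underlying argument is identical.
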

\begin{proof}
By \eqref{eq:nondegeneracy1-lambda}, Theorem \ref{1thm:main1} and the right-continuity of \eqref{eq:oblam2}, \eqref{eq:oblam3} we know that there exist $\kappa>0$ and $t_0>0$ such that
\begin{equation*}
  f(x,t) :=  -1+\frac{g(x)}{\lambda(t)} \leq -\kappa
\end{equation*}
for all $(x,t)$ in an open neighborhood of $\{u=0\}$ in $\Gamma\times [0,t_0]$.

Then we can adopt the proof of \cite[Theorem 3.2]{EK79} to obtain \eqref{eq:growthbound}.
\end{proof}

\section{Initial jump for degenerate data if the first nondegeneracy condition is violated}\label{Jump Section}

So far, we have proven the continuity of the functions $\alpha$ and $\lambda$ in $t=0$ assuming \eqref{eq:nondegeneracy1-lambda} and \eqref{eq:nondegeneracy2}.

This section highlights the necessity of  \eqref{eq:nondegeneracy1-lambda} for the continuity of the function $\lambda$ and of the support of the solution $u$.
More precisely, we will show that if \eqref{eq:nondegeneracy1-lambda} fails, then one cannot expect continuity of the function $\lambda$ nor of the set $\{u(\cdot,t)>0\}$ at $t=0$.
To this end, we assume that \eqref{eq:nondegeneracy2} holds true, while  \eqref{eq:nondegeneracy1-lambda} is violated in the sense that
\begin{equation}\label{violation1}
  |\{u_0=0\} \cap \{(1-g)-\alpha_0 g< 0\}| >0\;,
\end{equation}
where $\alpha_0=\frac{\int_{\{u_0>0\}} (1-g)\,dS}{\int_{\{u_0>0\}} g\,dS}\;$.
By \eqref{alphaAverage}, this assumption is equivalent to
\begin{equation}\label{violation2}
  |\{u_0=0\} \cap \{g> \lambda_0 \}| >0\;,
  \text{ where }\lambda_0=\fint_{\{u_0>0\}} g\;dS.
\end{equation}
The non-generic case in which $|\{u_0=0\} \cap \{g =\lambda_0 \} | >0$ will be addressed below (see Remark \ref{expl3} ).

For the sake of convenience, we choose in the following analysis to study the equivalent problem \eqref{eq:oblam1}-\eqref{eq:oblam4}, see Lemma \ref{2lem:equival}.
\medskip

Our aim is to prove that under assumption \eqref{violation2} the function $\lambda$ and the positivity set $\{u(\cdot,t)>0\}$ both will jump at $t=0$.
It turns out that we can characterize this jump in terms of a variational principle.
More specifically, we define $\Lambda[u_0]$ as follows.
\begin{definition}\label{jump}
For any open set $S\subset \Gamma$, we set
\begin{equation}\label{biglambda defn}
	\Lambda_S:=\sup \bigg \{ \fint_{A} g\;dS\;:\; A\subset\Gamma\text{ measurable with } S\subset A \bigg \}\;.
\end{equation}
In the particular case $S=\{u_0>0\}$, where $u_0:\Gamma\to \R$ denotes the nonnegative continuous initial data, we will write $\Lambda[u_0]:=\Lambda_{\{u_0>0 \}}$ for the sake of simplicity.
\end{definition}

We will prove that  under \eqref{violation2}
\begin{equation*}
  \lim \limits_{t \searrow 0}\lambda(t)=\Lambda[u_0] \quad \text{and} \quad \Lambda[u_0]>\lambda_0\;.
\end{equation*}
Moreover, we will show that the positivity set $\{u(\cdot,t)>0\}$ approximates as $t\to 0^{+}$, one of the sets, for which the maximum in \eqref{biglambda defn} is attained. Notice that we can have several maximizers in \eqref{biglambda defn} differing by sets contained in $\{g = \Lambda_S \}$.

\begin{definition}
For a given nonnegative continuous function $u_0:\Gamma \to \mathbb R$, we set
\begin{equation}
	A^0_*:=\big\{g\geq \Lambda[u_0] \big\} \cup \{u_0>0\}\;.
	\label{jumpset defn1}
	\end{equation}
%where $\Lambda[\cdot]$ is as given in Definition \ref{jump}.
\end{definition}
We will prove later that the maximum in \eqref{biglambda defn} is attained by the set $A^0_*$.

\medskip
Our main goal in this section is to prove the following result.

\begin{theorem}\label{main.p3}
Suppose that \eqref{eq:nondegeneracy2} holds true and that $g$ satisfies \eqref{gassumptions}.
For any $\eta>0$ there exists $\bar t=\bar t (\eta)>0$ such that the positivity set $\{u(\cdot,t)>0\}$ satisfies for all $0<t \leq \bar t(\eta)$
\begin{equation}
  \big(\{u_0>0\} \cup \{g>\Lambda[u_0]\} \big)_{-\eta}
  \subset \{u(\cdot,t)>0\}
  \subset  \big(\{u_0>0\} \cup \{g\geq \Lambda[u_0]\} \big)_{+\eta}. \label{newpositivityset}
\end{equation}
Furthermore,
\begin{equation}
  |\lambda(t)-\Lambda[u_0]| \leq \eta\, \quad \text{for all }0 < t \leq \bar t(\eta)\; .
  \label{initial jump}
\end{equation}
In particular, $\lambda(t) \to \Lambda[u_0]$ as $t \searrow 0$.
\end{theorem}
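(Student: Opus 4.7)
My plan is to reduce Theorem \ref{main.p3} to Theorem \ref{1thm:main1} via a comparison from above, extracting the lower bound on the positivity set directly from the obstacle inequality \eqref{eq:oblamalt3}. As a preliminary I verify $\Lambda[u_0]=\fint_{A^0_*}g\,dS$ by an exchange argument: for any measurable $A\supset\{u_0>0\}$, swapping a subset of $A\cap\{g<\Lambda[u_0]\}\setminus\{u_0>0\}$ for a subset of $\{g\geq\Lambda[u_0]\}\setminus A$ can only increase $\fint_A g$, so the supremum is attained at $A^0_*$. As a byproduct, $g<\Lambda[u_0]$ on the compact set $\Gamma\setminus\overline{A^0_*}$ with a uniform positive gap.

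For the outer inclusion, I would construct, for arbitrary $\sigma>0$, a $C^2$ datum $\tilde u_0\geq u_0$ satisfying $\{\tilde u_0>0\}\supset A^0_*$, $|\{\tilde u_0>0\}\triangle A^0_*|<\sigma$, and $\calH^2(\partial\{\tilde u_0>0\})=0$; the associated $\tilde\lambda_0:=\fint_{\{\tilde u_0>0\}}g\,dS$ lies within $O(\sigma)$ of $\Lambda[u_0]$, hence \eqref{eq:nondegeneracy1-lambda} for $\tilde u_0$ follows from the preceding gap. Theorem \ref{1thm:main1} applied to the corresponding solution $\tilde u$ gives $\tilde\lambda(t)\to\tilde\lambda_0$ and $\{\tilde u(\cdot,t)>0\}\to\{\tilde u_0>0\}$ in $L^1$ and Hausdorff. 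The contraction \eqref{contraction} applied to $(u,\tilde u)$ with $u_0\leq\tilde u_0$ forces $\int_\Gamma(u-\tilde u)_+\,dS\equiv 0$, whence $u\leq\tilde u$ a.e., and therefore $\{u(\cdot,t)>0\}\subset(A^0_*)_{+\eta}$ for $\sigma=\sigma(\eta)$ and $t\leq\bar t(\eta)$---the right inclusion of \eqref{newpositivityset}.

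For the convergence of $\lambda$, the reformulation \eqref{eq:oblamalt3} yields $g\leq\lambda(t)$ a.e.\ on $\{u(\cdot,t)=0\}$; since both $\{g>\lambda(t)\}$ and $\{u(\cdot,t)>0\}$ are open for $t>0$, $\{g>\lambda(t)\}\subset\{u(\cdot,t)>0\}$ pointwise. Combining with the outer inclusion gives
\begin{equation*}
  \{g>\lambda(t)\}\setminus A^0_* \;\subset\; (A^0_*)_{+\eta}\setminus A^0_*,
\end{equation*}
whose measure vanishes as $\eta\to 0$ (under \eqref{eq:nondegeneracy2} and the generic assumption $|\{g=\Lambda[u_0]\}|=0$). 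If $\lambda(t)\leq\Lambda[u_0]-\eps$, the left-hand side contains $\{u_0=0\}\cap\{\Lambda[u_0]-\eps<g<\Lambda[u_0]\}$, whose measure is bounded below by a positive constant depending only on $\eps$; hence $\lambda(t)\geq\Lambda[u_0]-\eps$ for $\eta=\eta(\eps)$ small. The matching upper bound $\lambda(t)\leq\Lambda[u_0]+o(1)$ follows from $\{u_0>\delta\}\subset\{u(\cdot,t)>0\}$ (via uniform Hölder continuity of $u$ at $t=0$), the variational monotonicity $\lambda(t)\leq\Lambda_{\{u_0>\delta\}}$, and $\Lambda_{\{u_0>\delta\}}\to\Lambda[u_0]$ as $\delta\to 0$ (using $|\{0<u_0\leq\delta\}|\to 0$), which together prove \eqref{initial jump}. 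The left inclusion of \eqref{newpositivityset} then follows by compactness: the continuous function $\psi(x):=\max\{u_0(x),g(x)-\Lambda[u_0]\}$ is strictly positive on the compact set $K_\eta:=(\{u_0>0\}\cup\{g>\Lambda[u_0]\})_{-\eta}$, so $\psi\geq\delta_0$ for some $\delta_0>0$; each $x\in K_\eta$ lies either in $\{u_0\geq\delta_0/2\}$ (whence $u(x,t)>0$ by parabolic continuity for $t$ small) or in $\{g\geq\Lambda[u_0]+\delta_0/2\}\subset\{g>\lambda(t)\}$ for $t$ small (whence $u(x,t)>0$ by the obstacle inclusion).

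The principal technical challenges are the explicit construction of $\tilde u_0$ with simultaneous smoothness, $\calH^2$-negligible boundary, pointwise domination $\tilde u_0\geq u_0$, and nondegeneracy margin, and the degenerate case $|\{g=\Lambda[u_0]\}|>0$, in which $A^0_*$ may fail to have measure-zero boundary and the lower-$\lambda$ estimate above must be refined. This second issue presumably also explains the asymmetry between the strict inclusion $\{g>\Lambda[u_0]\}$ in the inner bound and the non-strict $\{g\geq\Lambda[u_0]\}$ in the outer bound of \eqref{newpositivityset}.
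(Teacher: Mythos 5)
Your overall strategy---approximate $u_0$ from above by nondegenerate regular data $\tilde u_0$, apply the continuity Theorem \ref{3thm:main1} to the approximating solution, and transfer the upper bound to $u$ via the $L^1$-contraction \eqref{contraction}---is the same strategy the paper uses (with a sequence $u^0_n$ in place of a single $\tilde u_0$). The inner inclusion derived from the obstacle condition $\{g>\lambda(t)\}\subset\{u(\cdot,t)>0\}$ combined with the upper bound on $\lambda(t)$ is a genuinely different and more economical argument than the paper's explicit parabolic subsolution construction in Steps 1--3 of its proof, and it works (modulo the fact that \eqref{eq:oblamalt3} only gives this inclusion for a.e.\ $t$). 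However, two steps have real gaps.

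First, the nondegeneracy of $\tilde u_0$ is not secured. You assert a ``uniform positive gap'' $g<\Lambda[u_0]-c$ on $\Gamma\setminus\overline{A^0_*}$, but that set is open, not compact, so no such $c$ exists in general: as $\sigma\to0$ the zero set $\{\tilde u_0=0\}$ exhausts $\Gamma\setminus\overline{A^0_*}$ and the gap $c(\sigma):=\min_{\{\tilde u_0=0\}}(\Lambda[u_0]-g)$ tends to zero. To verify \eqref{eq:nondegeneracy1-lambda} for $\tilde u_0$ you need the \emph{rate comparison} $\Lambda[u_0]-\tilde\lambda_0<c(\sigma)$, which is not addressed; the paper handles this via the Morse--Sard theorem (choosing $\{\tilde u_0>0\}$ to contain a regular level set of $g$ close to $\Lambda[u_0]$, as in Lemma \ref{eta data regular} and its proof). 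You would also need $\{\tilde u_0>0\}\subset(A^0_*)_{+\eta}$ (a Hausdorff-type bound, not just the $L^1$ bound $|\{\tilde u_0>0\}\triangle A^0_*|<\sigma$) in order for the outer inclusion you extract from $\tilde u$ to be $(A^0_*)_{+\eta}$.

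Second, and more seriously, the lower bound $\lambda(t)\geq\Lambda[u_0]-\eps$ rests on the claim that $|\{u_0=0\}\cap\{\Lambda[u_0]-\eps<g<\Lambda[u_0]\}|$ is bounded below by a positive constant depending only on $\eps$. This is simply false in general: if \eqref{eq:nondegeneracy1-lambda} holds (so $g\leq\lambda_0-\theta$ on $\{u_0=0\}$ and $\Lambda[u_0]=\lambda_0$), or more generally whenever $g$ avoids the band $(\Lambda[u_0]-\eps,\Lambda[u_0])$ on $\{u_0=0\}$, this set is empty and your contradiction argument produces nothing, yet the conclusion still has to be proved. The paper instead derives the lower bound on $\lambda$ from the two inclusions in \eqref{newpositivityset} by decomposing $\{u_n(\cdot,t)>0\}$ into the three sets $A_n,B_n,C_n$ and estimating $\fint g$ directly (Step 5), an argument that is insensitive to whether $g$ takes values in any particular sub-band. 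You do in fact establish both inclusions of \eqref{newpositivityset} (the inner one via the upper $\lambda$ bound alone), so you could recover \eqref{initial jump} by a Step-5 style computation; but the argument you actually give does not close. Finally, you explicitly restrict to the generic case $|\{g=\Lambda[u_0]\}|=0$; Theorem \ref{main.p3} is stated without that assumption, and the paper's Step 5 decomposition is designed precisely to handle the plateau $B_n\subset\{g=\Lambda[u_0]\}$, where the average $\fint_{B_n}g$ equals $\Lambda[u_0]$ regardless of what $B_n$ is.
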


\medskip

\begin{remark}\label{Add1}
We stress that the inclusions in \eqref{newpositivityset} imply that there exists a set $B(t) \subset \{g=\Lambda[u_0] \}$ such that $\{u(\cdot,t)>0 \} \cup B(t)\to A^0_*$ with respect to the $L^1-$convergence of sets. It is worth noticing that $B(t)$ could in principle be oscillatory.
\end{remark}

\begin{remark}\label{Add2}
It follows from the proof of Theorem \ref{main.p3} and more specifically from \eqref{eq:inclusion-1} that
\begin{equation*}
  \{u_0>0\} \cup \{g>\Lambda[u_0]\}
  \subset \{\liminf_{t\searrow 0}u(\cdot,t)>0\}
  \subset \liminf_{t\searrow 0}\{u(\cdot,t)>0\}
  =\bigcup_{\delta>0}\bigcap_{0<t<\delta}\{u(\cdot,t)>0\}
\end{equation*}
and from \eqref{newpositivityset} that
\begin{equation*}
  \limsup_{t\searrow 0}\{u(\cdot,t)>0\}
  =\bigcap_{\delta>0}\bigcup_{0<t<\delta}\{u(\cdot,t)>0\}
  \subset  \overline{\{u_0>0\}} \cup \{g\geq \Lambda[u_0]\}.
\end{equation*}
Note that $|\overline{\{u_0>0\}}\setminus{\{u_0>0\}}|=0$ by the regularity of the set ${\{u_0>0\}}$.
On the other hand, if $|\{g=\Lambda[u_0]\}|>0$, then the upper and lower inclusion may differ by a set of positive measure.
\end{remark}

\begin{corollary}\label{cor:expl2}
Assume in addition that \eqref{violation2} holds.
Then
\begin{equation*}
  \lim_{t\searrow 0}\lambda(t)=\Lambda[u_0]>\lambda_0 \quad \text{and}\quad \vert A_*^0\setminus \{u_0>0\} \vert>0\;.
\end{equation*}
\end{corollary}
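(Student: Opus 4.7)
The plan is to read the first assertion straight off Theorem \ref{main.p3} and to derive the two strict inequalities from the variational definition \eqref{biglambda defn} of $\Lambda[u_0]$ together with the hypothesis \eqref{violation2}. The convergence $\lim_{t\searrow 0}\lambda(t)=\Lambda[u_0]$ is already contained in \eqref{initial jump}, so no additional work is required for that piece.

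To establish the strict inequality $\Lambda[u_0]>\lambda_0$ I would plug a concrete competitor into \eqref{biglambda defn}. Set $E:=\{u_0=0\}\cap\{g>\lambda_0\}$; the hypothesis \eqref{violation2} gives $|E|>0$. The test set $A:=\{u_0>0\}\cup E$ is admissible, and using $\int_{\{u_0>0\}}g\,dS=\lambda_0|\{u_0>0\}|$ along with the strict pointwise bound $g>\lambda_0$ on $E$ (which integrates, since $|E|>0$, to $\int_E g\,dS>\lambda_0|E|$) yields
\[
\fint_{A}g\,dS \;=\; \frac{\lambda_0|\{u_0>0\}|+\int_E g\,dS}{|\{u_0>0\}|+|E|} \;>\; \lambda_0,
\]
hence $\Lambda[u_0]\geq\fint_A g>\lambda_0$.

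For $|A_*^0\setminus\{u_0>0\}|>0$ I would argue by contradiction. Since, by \eqref{jumpset defn1}, $A_*^0\setminus\{u_0>0\}=\{u_0=0\}\cap\{g\geq\Lambda[u_0]\}$, the negation amounts to $g<\Lambda[u_0]$ almost everywhere on $\{u_0=0\}$. Splitting an arbitrary admissible set $A\supset\{u_0>0\}$ into its $\{u_0>0\}$ and $\{u_0=0\}$ parts gives
\[
|A|\bigl(\fint_A g-\Lambda[u_0]\bigr) \;=\; |\{u_0>0\}|(\lambda_0-\Lambda[u_0]) + \int_{A\cap\{u_0=0\}}(g-\Lambda[u_0])\,dS.
\]
The first term on the right is strictly negative by the previous step, while the second is non-positive under the contradiction assumption; bounding $|A|\leq|\Gamma|$ yields the $A$-uniform gap
\[
\fint_A g - \Lambda[u_0] \;\leq\; \frac{|\{u_0>0\}|\,(\lambda_0-\Lambda[u_0])}{|\Gamma|} \;<\;0,
\]
which contradicts $\Lambda[u_0]=\sup_A\fint_A g$.

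The only genuine obstacle is making the negative gap in the contradiction step \emph{uniform} in $A$. A merely pointwise strict inequality $\fint_A g<\Lambda[u_0]$ for each individual $A$ would not suffice, since the supremum could still equal $\Lambda[u_0]$. The decisive observation is that the negative contribution coming from the fixed set $\{u_0>0\}$ cannot be diluted by enlarging $A$, because $|A|$ is a priori controlled by the finite total area $|\Gamma|$; this is precisely the point where the first strict inequality $\Lambda[u_0]>\lambda_0$ feeds into the second.
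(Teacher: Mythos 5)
Your argument is correct, and for the first two assertions (the convergence of $\lambda(t)$ via Theorem \ref{main.p3} and the strict inequality $\Lambda[u_0]>\lambda_0$ via the competitor $A=\{u_0>0\}\cup\{u_0=0,\,g>\lambda_0\}$) it matches the paper's proof essentially verbatim.

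For the third assertion $|A_*^0\setminus\{u_0>0\}|>0$, you take a genuinely different route. The paper argues tersely: $A_*^0\setminus\{u_0>0\}=\{g\geq\Lambda[u_0]\}\cap\{u_0=0\}$, and if this set were null, then (invoking Lemma \ref{well-posed}(i), which says the supremum in \eqref{biglambda defn} is attained by $A_*^0$) one gets $\Lambda[u_0]=\fint_{A_*^0}g=\fint_{\{u_0>0\}}g=\lambda_0$, contradicting the strict inequality just proved. You instead avoid appealing to attainment of the supremum: you show that under the contradiction hypothesis $g<\Lambda[u_0]$ a.e.\ on $\{u_0=0\}$, the deficit
\[
|A|\Bigl(\fint_A g-\Lambda[u_0]\Bigr)=|\{u_0>0\}|(\lambda_0-\Lambda[u_0])+\int_{A\cap\{u_0=0\}}(g-\Lambda[u_0])\,dS\leq |\{u_0>0\}|(\lambda_0-\Lambda[u_0])<0
\]
yields, after dividing by $|A|\leq|\Gamma|$, a uniform negative gap $\fint_A g\leq\Lambda[u_0]-c$ with $c>0$ independent of $A$, which contradicts the defining supremum. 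Your observation that uniformity is the crux, and that it is secured by $|A|\leq|\Gamma|$, is exactly right: a merely pointwise strict inequality for each $A$ would not rule out $\sup_A\fint_A g=\Lambda[u_0]$. Both proofs are valid; yours is slightly longer but more self-contained, as it does not rely on the attainment part of Lemma \ref{well-posed}(i), only on the strict gap $\Lambda[u_0]>\lambda_0$ and the finiteness of $|\Gamma|$.
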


\begin{proof}
The first equality follows from Theorem \ref{main.p3}.

Let us consider the set $A=\{u_0>0\} \cup B$, where $B:=\{u_0=0\} \cap \{g>\lambda_0\}$ satisfies $\vert B \vert >0$ by \eqref{violation2}.
We obtain that
\begin{align*}
  \fint_A g\;dS=\frac{1}{\vert A \vert } \bigg(\int_{\{u_0>0\}} g\;dS+\int_{B} g\;dS\bigg)
  > \frac{1}{\vert A \vert } \big(\lambda_0\vert \{u_0>0\}\vert+\lambda_0\vert B \vert \big)=\lambda_0\;,
\end{align*}
which yields due to Definition \ref{jump} that $\Lambda[u_0]>\lambda_0$.

Moreover,
\begin{equation*}
  \big|A_*^0\setminus \{u_0>0\}\big|=\big|\{g\geq \Lambda[u_0]\}\cap\{u_0=0\}\big| >0,
\end{equation*}otherwise using once again Definition \ref{jump} we obtain that $\Lambda[u_0]=\lambda_0$ which contradicts the first statement of the Corollary.
\end{proof}

\begin{remark}\label{expl1}
If in addition to the assumptions in Theorem \ref{main.p3} also \eqref{eq:nondegeneracy1} holds, Theorem \ref{main.p3} reduces to Theorem \ref{3thm:main1}.
Indeed, \eqref{eq:nondegeneracy1} is by \eqref{alphaAverage} equivalent to
\begin{equation}\label{R1}
  g<\lambda_0\quad \text{ in } \{u_0=0\}\;.
\end{equation}
Then, for any set $A$ as in Definition \ref{jump} with $\vert A\setminus \{u_0>0\} \vert >0$, we obtain due to \eqref{R1} and \eqref{eq:oblam4} that
\begin{align*}
  \fint_A g\;dS=\frac{1}{\vert A \vert } \bigg(\int_{\{u_0>0\}} g\;dS+\int_{A\setminus \{u_0>0\}} g\;dS\bigg)
  <\frac{1}{\vert A \vert } \big(\lambda_0\vert \{u_0>0\}\vert+\lambda_0\vert A\setminus \{u_0>0\} \vert \big)=\lambda_0\;.
\end{align*}
Thus, since $\lambda_0 \leq \Lambda$ by definition of $\lambda_0$, we conclude that $\Lambda[u_0]=\lambda_0$ and \eqref{initial jump} reduces to \eqref{continuityy}.
Furthermore, by \eqref{R1} and the fact that $\Lambda[u_0]=\lambda_0$, we observe that the right- as well as the left-hand side of \eqref{newpositivityset} can be written now equivalently as
\begin{equation*}
	\big\{g\geq \Lambda[u_0] \big\} \cup \{u_0>0\}=\big\{g\geq \lambda_0 \big\} \cup \{u_0>0\}=\{u_0>0\}=\big\{g> \Lambda[u_0] \big\} \cup \{u_0>0\}\;.
\end{equation*}
Hence, \eqref{newpositivityset} reduces to \eqref{inclusions1} and therefore, Theorem \ref{main.p3} can be considered as a generalization of Theorem \ref{3thm:main1}.
\end{remark}

\begin{remark}\label{expl3}
We will not consider in detail the non-generic case
\begin{equation}\label{non-generic}
  \vert \{u_0=0\} \cap \{g= \lambda_0 \} \vert >0\;.
\end{equation}
It is worth noticing though, that in this case one can deduce, arguing in a similar way as in Remark \ref{expl1}, that $\lambda_0=\Lambda[u_0]$ while $\vert \{u_0>0\}\Delta A_*^0 \vert>0$.  Hence, it is possible to choose initial data that satisfy \eqref{non-generic} such that $\lambda$ will be continuous at $t=0$, while  the interface will jump.
\end{remark}

\begin{remark}\label{expl4} In the case of plateaus
	\begin{equation}\label{plateau}
	\vert \{g= \Lambda[u_0] \} \vert>0\;,
	\end{equation}in which $g$ takes a constant value, it is not possible to decide using only the assumptions of Theorem \ref{main.p3} if the points of the set $\{g= \Lambda[u_0] \}$ are contained in the positivity set $\{u(\cdot,t)>0\}$ or in the set $ \{u(\cdot,t)=0\}$ as $t\to 0^+$.

	The reason for that is that in this case the sign of $\lambda(t)-\Lambda[u_0]$ could depend on the details of the initial data $u_0$ and as a consequence, the points of $\{g= \Lambda[u_0] \}$ could lie either in the positivity set $\{u(\cdot,t)>0\}$ or in the set $ \{u(\cdot,t)=0\}$ as $t\to 0^+$. As a matter of fact, a similar situation can occur not only when \eqref{plateau} holds, but also in any set $A\subset \{g= \Lambda[u_0] \}$ such that $A_{+\delta} \subset \{g\leq  \Lambda[u_0] \}$ for any $\delta>0$.

\end{remark}

\subsection{A variational characterization of $\Lambda$}

We collect some properties of $\Lambda[u_0]$ and $A^0_*$ that will be used in the following analysis.

\begin{lemma}\label{well-posed}
\begin{enumerate}[label=(\roman*)]
	\item The maximum in \eqref{biglambda defn} is attained by the set $A^0_*$.
    Any maximizer is (up to sets of measure zero) contained in $A^0_*$.
    If $|\{g=\Lambda[u_0]\}\cap\{u_0=0\}|=0$ then $A^0_*$ is unique (up to sets of measure zero).
	\item For every $x\in {(A^0_*)}^{\mathsf{c}}$ it holds that $g(x)< \Lambda[u_0]$.
	\item For every $x\in \partial A^0_* \cap \interior{\{u_0=0\}}$ it holds that $g(x)=\Lambda[u_0]$.
\end{enumerate}
\end{lemma}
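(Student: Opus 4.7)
The proof splits naturally: (i) is the substantive part and rests on a bathtub/rearrangement argument, while (ii) and (iii) follow directly from the definition and continuity of $g$. For the attainment in (i), set $\Lambda:=\Lambda[u_0]$ and write any competitor $A\supset\{u_0>0\}$ as $A=\{u_0>0\}\cup B$ with $B\subset\{u_0=0\}$. By the bathtub principle applied to $g$ on $\{u_0=0\}$, replacing $B$ by a superlevel set $\{u_0=0\}\cap\{g\geq t\}$ of the same measure (possibly including a portion of the level set $\{g=t\}$) only increases $\int_B g$ and hence $\fint_A g$; thus one may restrict the supremum to the one-parameter family $A(t):=\{u_0>0\}\cup(\{u_0=0\}\cap\{g\geq t\})$. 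Given an $\eps$-maximizer $A(t_\eps)$ with $\fint_{A(t_\eps)}g>\Lambda-\eps$, two cases arise. If $t_\eps\geq\Lambda$, then $A(t_\eps)\subset A_*^0$ and $A_*^0\setminus A(t_\eps)\subset\{g\geq\Lambda\}$, so splitting $\int_{A_*^0}g=\int_{A(t_\eps)}g+\int_{A_*^0\setminus A(t_\eps)}g$ and using $g\geq\Lambda$ on the second piece gives $\fint_{A_*^0}g\geq\Lambda-C\eps$. If $t_\eps<\Lambda$, then $A_*^0\subset A(t_\eps)$ modulo a subset of $\{g=\Lambda\}$ and the extra mass satisfies $g<\Lambda$; expressing $\fint_{A(t_\eps)}g$ as a convex combination of $\fint_{A_*^0}g$ and a quantity strictly below $\Lambda$ again forces $\fint_{A_*^0}g\geq\Lambda-C\eps$. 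Letting $\eps\to 0$ yields $\fint_{A_*^0}g=\Lambda$.

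For the remaining claims in (i), let $A^*$ be any maximizer. If $B:=A^*\cap\{u_0=0\}\cap\{g<\Lambda\}$ had positive measure, removing $B$ would strictly raise the average (since $g<\Lambda$ on the removed part), contradicting maximality; hence $A^*\subset A_*^0$ up to null sets. Under the additional hypothesis $|\{g=\Lambda\}\cap\{u_0=0\}|=0$, the difference $A_*^0\setminus A^*$ lies in $\{u_0=0\}\cap\{g>\Lambda\}$ up to null sets, and a symmetric exchange argument (now comparing $A_*^0$ against $A^*$ and using that adding points where $g>\Lambda$ strictly raises the average) shows this difference must itself be null, yielding uniqueness up to null sets.

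Claim (ii) is immediate since $\cmp{A_*^0}\subset\{u_0=0\}\cap\{g<\Lambda[u_0]\}$. For (iii), if $x\in\partial A_*^0\cap\interior{\{u_0=0\}}$, then in a neighborhood of $x$ the set $A_*^0$ agrees with $\{g\geq\Lambda[u_0]\}$, so $x\in\partial\{g\geq\Lambda[u_0]\}$; by continuity of $g$ every neighborhood of $x$ meets both $\{g\geq\Lambda[u_0]\}$ and $\{g<\Lambda[u_0]\}$, forcing $g(x)=\Lambda[u_0]$.

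The main technical obstacle is the attainment step in (i), in particular handling a possibly positive-measure plateau $\{g=\Lambda\}$ on which the bathtub principle leaves a genuine ambiguity: $\eps$-maximizers need not converge to $A_*^0$ in any strong set-theoretic sense, since their symmetric difference with $A_*^0$ can oscillate arbitrarily inside the plateau. The two-case comparison against the fixed candidate $A_*^0$ sketched above is designed precisely to absorb this ambiguity into an $O(\eps)$ error term, thereby circumventing the need for compactness of the maximizing sequence of sets.
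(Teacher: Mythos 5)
Your proof is correct, and it takes a genuinely different route to attainment in item (i) than the paper does. The paper first proves a superadditivity inequality (its equation~\eqref{note}): for $A,B\supset\{u_0>0\}$,
\begin{equation*}
\Lambda[u_0]-\fint_{A\cup B}g \;\le\; \Big(\Lambda[u_0]-\fint_{A}g\Big)+\Big(\Lambda[u_0]-\fint_{B}g\Big),
\end{equation*}
then takes a maximizing sequence $(A_k)$ with geometrically decaying defect, forms the nested tails $A^N=\bigcup_{k\ge N}A_k$, and uses the superadditivity to show the defect of $A^N$ tends to $0$; monotone convergence then identifies $A^\infty=\limsup_k A_k$ as a maximizer, which is finally compared to $A^0_*$ by the same split/exchange computation you use. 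Your version bypasses both the superadditivity lemma and the limsup construction: the bathtub principle collapses the competition to the one-parameter family of superlevel sets, and then a single two-sided comparison of an $\eps$-maximizer against the fixed candidate $A^0_*$ yields attainment directly. What the paper's route buys is abstraction — it never uses the level-set structure of the constraint, only the definition of $\Lambda$, so it would transfer verbatim to a more general admissible class. What your route buys is concreteness and brevity: it exploits the fact that the admissible sets are exactly $\{u_0>0\}\cup B$ with $B\subset\{u_0=0\}$ free, for which rearrangement is tailor-made, and it makes the plateau ambiguity in $\{g=\Lambda[u_0]\}$ visible and harmless. One small point to tighten: after the bathtub reduction the optimal $B$ of a given measure is pinned only between $\{g>t\}\cap\{u_0=0\}$ and $\{g\geq t\}\cap\{u_0=0\}$, so the one-parameter family should strictly speaking include both endpoints; your two-case analysis already accommodates this (in Case~1 the inclusion $A(t_\eps)\subset A^0_*$ and $g\ge\Lambda[u_0]$ on the remainder, and in Case~2 the inclusion $A^0_*\subset A(t_\eps)$ with $g<\Lambda[u_0]$ on the excess, both hold for either endpoint), but it is worth saying so. The exchange arguments for the ``any maximizer is contained in $A^0_*$'' claim, for uniqueness under the plateau-free hypothesis, and for items (ii)–(iii) match the paper's reasoning.
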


\begin{proof}
We first prove $\textit{(i)}$.
For any $A,B\subset\Gamma$ we observe that
\begin{align}
  \fint_{A\cup B} \big(\Lambda[u_0]-g\big)\,dS &= \frac{|A|}{|A\cup B|}\fint_{A} \big(\Lambda[u_0]-g\big)\,dS \nonumber\\
  &\qquad + \frac{|B|}{|A\cup B|}\fint_{B} \big(\Lambda[u_0]-g\big)\,dS - \frac{|A\cap B|}{|A\cup B|}\fint_{A\cap B} \big(\Lambda[u_0]-g\big)\,dS.
  \label{note}
\end{align}
For $A,B\subset\Gamma$ with $\{u_0>0\}\subset A\cap B$ \eqref{note} equation combined with the definition of $\Lambda[u_0]$ implies
\begin{align*}
  \Lambda[u_0]-\fint_{A\cup B} g\,dS &\leq \frac{|A|}{|A\cup B|}\Big(\Lambda[u_0]-\fint_{A} g\,dS\Big) + \frac{|B|}{|A\cup B|}\Big(\Lambda[u_0]-\fint_{B} g\,dS\Big)\\
  &\leq \Big(\Lambda[u_0]-\fint_{A} g\,dS\Big) + \Big(\Lambda[u_0]-\fint_{B} g\,dS\Big).
\end{align*}

We next show that the maximum in \eqref{biglambda defn} is attained.
We therefore consider a sequence $(A_k)_k$ of measurable subsets of $\Gamma$ with $\{u_0>0\}\subset A_k$ and
\begin{equation*}
  \fint_{A_k} g\,dS \geq \Lambda[u_0]-2^{-k-1}
\end{equation*}
for all $k\in\N$.
Let $A^N:=\bigcup_{k\geq N}$, then the first item implies that
\begin{align*}
  \Lambda[u_0]-\fint_{A^N} g\,dS &\leq \sum_{k\geq N}\Big(\Lambda[u_0]-\fint_{A_k} g\,dS\Big)\leq 2^{-N}\,\to\, 0 \quad \text{as }\;N\to\infty.
\end{align*}
Moreover $(A^N)_{N\in\N}$ is monotonically decreasing and converges to $A^\infty=\limsup_{k\to\infty}A_k$.
By monotonicity we conclude
\begin{align*}
  \fint_{A^\infty} g\,dS &= \lim_{N\to\infty}\fint_{A^N} g\,dS \geq \Lambda[u_0].
\end{align*}
On the other hand $\{u_0>0\}\subset A^\infty$, hence $A^\infty$ is a maximizer in \eqref{biglambda defn}.
We next show that $A^0_*$ is also a maximizer.
Let $A^\infty$ be an arbitrary maximizer, which exists by the previous item.
We first use $A^\infty=(A^\infty\cap A^0_*)\cup (A^\infty\setminus A^0_*)$ and deduce by the first item
\begin{align*}
  \Lambda[u_0] &= \frac{|A^\infty\cap A^0_*|}{|A^\infty|}\fint_{A^\infty\cap A^0_*} g\,dS + \frac{|A^\infty\setminus A^0_*|}{|A^\infty|}\fint_{A^\infty\setminus A^0_*} g\,dS\\
  &\qquad\leq \frac{|A^\infty\cap A^0_*|}{|A^\infty|}\Lambda[u_0] + \frac{|A^\infty\setminus A^0_*|}{|A^\infty|}\Lambda[u_0] = \Lambda[u_0],
\end{align*}
where the inequality follows by the definition of $\Lambda[u_0]$, and since $g<\Lambda$ on $A^\infty\setminus A^0_*\subset\{u_0=0\}$.
This shows that the inequality needs to be an equality, which implies $|A^\infty\setminus A^0_*|=0$ and $A^\infty\subset A^0_*$ up to a set of measure zero.

On the other hand, we use $A^0_*=A^\infty\cup (A^0_*\setminus A^\infty)$ up to a set of measure zero and obtain by $\{u_0>0\}\subset A^0_*$ and the definition of $\Lambda[u_0]$ together with the first item
\begin{align*}
  \Lambda[u_0]\geq \fint_{A^0_*} g\,dS &= \frac{|A^\infty|}{|A^0_*|}\fint_{A^\infty} g\,dS +\frac{|A^0_*\setminus A^\infty|}{|A^0_*|}\fint_{A^0_*\setminus A^\infty} g\,dS \\
  &\geq \frac{|A^\infty|}{|A^0_*|}\Lambda[u_0]+\frac{|A^0_*\setminus A^\infty|}{|A^0_*|}\Lambda[u_0]=\Lambda[u_0],
\end{align*}
where we have used the second item and $g\geq\Lambda$ on $A^0_*\setminus A^\infty\subset\{u_0=0\}$.
Equality in the inequalities above requires $\fint_{A^0_*} g\,dS=\Lambda[u_0]$ and $g=\Lambda[u_0]$ almost everywhere on $A^0_*\setminus A^\infty$.
This proves the first item of the lemma.

\medskip
The items $\textit{(ii)}$ and $\textit{(iii)}$ follow from the definition of the set $A^0_*$.
\end{proof}

We collect some further properties of the functional $\Lambda$.
\begin{proposition}\leavevmode
\label{Lambda_properties1}
\begin{enumerate}
 \item Monotonicity: For any measurable sets $S_1, S_2$ with $S_1\subset S_2$ we have $\Lambda_{S_1}\geq \Lambda_{S_2}$.
The maximizer
 \begin{equation*}
  S_*^j := S_j\cup \{g\geq \Lambda_{S_j}\},\,j=1,2
 \end{equation*}
 (see Lemma \ref{well-posed}) satisfies $S_*^1\subset S_*^2$.
 \item Continuity: For any open set $S\subset\Gamma$, let the sets $S_{-\delta}$ be as in Definition \ref{app delta sets defn}.
Then
 $\Lambda_{S_{-\delta}}\searrow \Lambda_S$ as $\delta\searrow 0$.
\end{enumerate}
\end{proposition}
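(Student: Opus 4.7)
The proof splits naturally into the two items, with (ii) being the substantive part.

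\textbf{Part (i): Monotonicity.} The first inequality is immediate from the supremum definition of $\Lambda$: if $S_1\subset S_2$, then any competitor $A$ for $\Lambda_{S_2}$ (i.e.\ with $S_2\subset A$) is also a competitor for $\Lambda_{S_1}$, so the supremum can only grow, yielding $\Lambda_{S_1}\geq \Lambda_{S_2}$. For the inclusion of the maximizers I would split $S_*^1 = S_1\cup\{g\geq \Lambda_{S_1}\}$ into its two pieces and show each sits inside $S_*^2$: $S_1\subset S_2\subset S_*^2$ is trivial, while $\Lambda_{S_1}\geq\Lambda_{S_2}$ gives $\{g\geq \Lambda_{S_1}\}\subset\{g\geq\Lambda_{S_2}\}\subset S_*^2$. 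This step is bookkeeping.

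\textbf{Part (ii): Continuity.} The plan is to combine the monotonicity from (i) with a monotone convergence argument. First I would note that since $S$ is open, every $x\in S$ has $d(x,S^{\mathsf c})>0$, hence $\bigcup_{\delta>0}S_{-\delta}=S$ and the family $\delta\mapsto S_{-\delta}$ is monotonically increasing as $\delta\searrow 0$. By (i), $\delta\mapsto\Lambda_{S_{-\delta}}$ is therefore monotonically decreasing and bounded below by $\Lambda_S$, so the limit $\Lambda^{*}:=\lim_{\delta\searrow 0}\Lambda_{S_{-\delta}}\geq \Lambda_S$ exists. The goal is to show the reverse inequality.

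For each $\delta$ I would invoke Lemma \ref{well-posed} to obtain the canonical maximizer $A_\delta^{*} := S_{-\delta}\cup\{g\geq \Lambda_{S_{-\delta}}\}$, so that $\fint_{A_\delta^{*}}g\,dS=\Lambda_{S_{-\delta}}$. Both pieces defining $A_\delta^{*}$ are increasing as $\delta\searrow 0$ (the first by the previous paragraph, the second because the level threshold $\Lambda_{S_{-\delta}}$ decreases), so the family $\{A_\delta^{*}\}$ is monotone increasing. Setting $A_0^{*}:=\bigcup_{\delta>0}A_\delta^{*}$, the set $A_0^{*}$ is measurable, contains $S$, and satisfies $|A_0^{*}|\geq |S|>0$ (using that $S$ is open and nonempty, so that averaging is meaningful). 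Monotone convergence then gives
\begin{equation*}
  \fint_{A_0^{*}} g\,dS \;=\; \lim_{\delta\searrow 0}\fint_{A_\delta^{*}} g\,dS \;=\; \Lambda^{*}.
\end{equation*}
Since $S\subset A_0^{*}$, the very definition of $\Lambda_S$ as a supremum forces $\Lambda_S\geq \fint_{A_0^{*}}g\,dS=\Lambda^{*}$, which combined with $\Lambda^{*}\geq \Lambda_S$ yields the claimed equality.

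\textbf{Main obstacle.} There is no real obstacle here; the main conceptual point is realizing that the natural maximizers $A_\delta^{*}$ form a monotone family, so that limits of averages can be read off by monotone convergence rather than by a compactness/selection argument. The only minor cautionary check is the openness of $S$, which is needed both to guarantee $\bigcup_{\delta>0}S_{-\delta}=S$ and to ensure $|A_0^{*}|>0$ so that the limit of the averaged integrals is well defined.
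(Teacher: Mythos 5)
Your proof is correct and follows essentially the same route as the paper's: use monotonicity from (i) to get existence of the limit, observe that the canonical maximizers $A_\delta^*$ form an increasing family as $\delta\searrow 0$, pass to the limit in the averaged integrals by monotone convergence, and conclude from $S\subset A_0^*$ that $\Lambda_S\geq\lim_{\delta\searrow 0}\Lambda_{S_{-\delta}}$. The only cosmetic difference is that you deduce $S\subset A_0^*$ directly from $\bigcup_{\delta>0}S_{-\delta}=S$, whereas the paper first computes $S_0=S\cup\{g>\lim_{\delta\searrow 0}\Lambda_{S_{-\delta}}\}$ explicitly; both are valid and neither changes the substance of the argument.
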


\begin{proof}
\begin{enumerate}
\item  The property $\Lambda_{S_1}\geq \Lambda_{S_2}$ follows directly from the definition of $\Lambda_S$.
This also implies $\{g\geq \Lambda_{S_1}\}\subset \{g\geq\Lambda_{S_2}\}$, hence $S_*^1\subset S_*^2$ by definition.
\item  By definition $S_{-\delta_1}\supset S_{-\delta_2}$ for $\delta_1<\delta_2$ and hence the first item implies that $\delta\mapsto\Lambda_{S_{-\delta}}$ is increasing.
Therefore
 \begin{equation*}
  \lambda_0 := \lim_{\delta\searrow 0}\Lambda_{S_{-\delta}}
 \end{equation*}
 exists.
By the first item the maximizers
 \begin{equation*}
  S_*^{-\delta }:= S_{-\delta}\cup \{g\geq \Lambda_{S_{-\delta}}\}
 \end{equation*}
 are monotone in the sense that $S_*^{-\delta_1}\supset S_*^{-\delta_2}$ for $0<\delta_1<\delta_2$.
 We deduce that
 $\Chi_{S_*^{-\delta }}$ is monotonically increasing with $\delta\searrow 0$ and hence converges to $ \Chi_{S_0}$ with $S_0:= \bigcup_{\delta>0} S_*^{-\delta}$, in particular
 \begin{equation*}
  \Chi_{S_*^{-\delta }}\to \Chi_{S_0}\quad\text{ in }L^1(\Gamma)
 \end{equation*}
 and therefore
 \begin{equation*}
  \fint_{S_0}g = \lim_{\delta\searrow 0} \Lambda_{S_{-\delta}}.
 \end{equation*}
 Since $S$ is open and $g$ is continuous we also have
 \begin{equation*}
  S_0 = \bigcup_{\delta>0} S_*^{-\delta} = \bigcup_{\delta>0} S_{-\delta} \cup \bigcup_{\delta>0} \{g\geq \Lambda_{S_{-\delta}}\}
  = S \cup \{g>\lim_{\delta\searrow 0} \Lambda_{S_{-\delta}}\},
 \end{equation*}
 hence
 \begin{equation*}
  S\subset S_0
 \end{equation*}
 which implies by \eqref{biglambda defn}
 \begin{equation*}
  \Lambda_S \geq \fint_{S_0}g = \lim_{\delta\searrow 0} \Lambda_{S_{-\delta}}.
 \end{equation*}
 On the other hand $S_{-\delta}\subset S$, hence using once again the monotonicity in the first item, we obtain
 \begin{equation*}
  \lim_{\delta\searrow 0} \Lambda_{S_{-\delta}} \geq \Lambda_S,
 \end{equation*}
 which proves equality.
\end{enumerate}
\end{proof}

We next consider a solution $u$ of \eqref{eq:obalph1}-\eqref{eq:obalph4} and connect the functional $\Lambda$ to the function $\lambda$.
\begin{corollary}\label{Lambda properties2}
Recall that $\Lambda[u(\cdot,t)]=\Lambda_{\{u(\cdot,t)>0\}}$.
Then, it holds
 \begin{align}
  \lambda(t) &\leq \Lambda[u(\cdot,t)] \quad\text{ for all }t\in (0,T),
  \label{eq:lambdas-ineq}\\
  \lambda(t) &= \Lambda[u(\cdot,t)] \quad\text{ for almost all }t\in (0,T).
 \label{eq:lambdas}
\end{align}
\end{corollary}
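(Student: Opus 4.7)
\textbf{Proof plan for Corollary \ref{Lambda properties2}.}
The inequality \eqref{eq:lambdas-ineq} is immediate: the set $\{u(\cdot,t)>0\}$ itself is an admissible competitor in the supremum defining $\Lambda_{\{u(\cdot,t)>0\}}$, so
\begin{equation*}
\lambda(t)=\fint_{\{u(\cdot,t)>0\}} g\,dS \leq \sup_{A\supset\{u(\cdot,t)>0\}} \fint_A g\,dS = \Lambda[u(\cdot,t)].
\end{equation*}
(Mass conservation together with $|\{u_0>0\}|>0$ ensures $|\{u(\cdot,t)>0\}|>0$, so the average is well defined.)

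For the equality \eqref{eq:lambdas}, the plan is to exploit the equivalent formulation \ref{it:oblamalt} of Lemma \ref{2lem:equival}. Condition \eqref{eq:oblamalt3} asserts that $g\leq\lambda$ almost everywhere in $\{u=0\}$ as a subset of $\Gamma_T$. By Fubini's theorem applied to the characteristic function of $\{u=0\}\cap\{g>\lambda\}\subset \Gamma_T$, for almost every $t\in (0,T)$ the slice property
\begin{equation*}
g(x)\leq \lambda(t)\quad\text{ for }\calH^2\text{-almost every }x\in\{u(\cdot,t)=0\}
\end{equation*}
holds. Fix such a $t$ and let $A\subset\Gamma$ be any measurable set with $\{u(\cdot,t)>0\}\subset A$. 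Decomposing $A$ into $\{u(\cdot,t)>0\}$ and $A\setminus\{u(\cdot,t)>0\}\subset\{u(\cdot,t)=0\}$ and using $\int_{\{u(\cdot,t)>0\}} g\,dS=\lambda(t)\,|\{u(\cdot,t)>0\}|$ together with the slice inequality yields
\begin{equation*}
\fint_A g\,dS
\leq \frac{\lambda(t)|\{u(\cdot,t)>0\}| + \lambda(t)|A\setminus\{u(\cdot,t)>0\}|}{|A|}
= \lambda(t).
\end{equation*}
Taking the supremum over admissible $A$ gives $\Lambda[u(\cdot,t)]\leq \lambda(t)$, which combined with \eqref{eq:lambdas-ineq} proves \eqref{eq:lambdas} for a.e.\ $t\in (0,T)$.

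The main (mild) obstacle is the passage from the space-time statement \eqref{eq:oblamalt3} to the pointwise-in-$t$ slice statement, which costs a null set of times and explains why \eqref{eq:lambdas} is asserted only almost everywhere, whereas \eqref{eq:lambdas-ineq} holds for every $t$. Everything else is a direct consequence of the variational definition of $\Lambda$.
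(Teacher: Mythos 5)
Your proof is correct. The inequality \eqref{eq:lambdas-ineq} is handled identically in the paper (it follows directly from the definitions). For \eqref{eq:lambdas} both you and the paper use the same core fact — the obstacle condition \eqref{eq:oblamalt3} plus Fubini gives, for a.e.\ $t$, that $g\leq\lambda(t)$ $\calH^2$-a.e.\ on $\{u(\cdot,t)=0\}$ — but the way you deploy it differs slightly. You argue directly: for any admissible competitor $A\supset\{u(\cdot,t)>0\}$ you split $A$ and bound $\fint_A g\,dS\leq\lambda(t)$, then take the supremum. The paper instead argues by contradiction, introducing the candidate maximizer $A_*(t)=\{u(\cdot,t)>0\}\cup\{g\geq\Lambda[u(\cdot,t)]\}$ and invoking Lemma \ref{well-posed} to identify $\Lambda[u(\cdot,t)]=\fint_{A_*(t)}g$; the slice inequality then forces $A_*(t)=\{u(\cdot,t)>0\}$ up to a null set, collapsing the value to $\lambda(t)$. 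Your route is arguably cleaner in that it bypasses Lemma \ref{well-posed} and the existence/structure of maximizers entirely, relying only on the variational definition of $\Lambda$; the paper's version, on the other hand, reuses machinery it has already set up. Both are rigorous, and the Fubini step you make explicit is precisely the reason the equality holds only for almost every $t$, which you correctly flag.
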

\begin{proof}
The inequality \eqref{eq:lambdas-ineq} follows from the definitions of $\Lambda$ and $\lambda$.

Let $A_*(t):=\{u(\cdot,t)>0\}\cup \{g\geq \Lambda[u(\cdot,t)]\}$.
By \eqref{eq:oblam2} for all $t\in (0,T)\setminus N$, $|N|=0$ we have $g\leq \lambda(t)$ in $\{u(\cdot,t)=0\}$.
Assume for some $t\in (0,T)\setminus N$ that $\lambda(t)<\Lambda[u(\cdot,t)]$.
Then
\begin{align*}
  A_*(t)\setminus \{u(\cdot,t)>0\}=\{g\geq \Lambda[u(\cdot,t)]\}\cap\{u(\cdot,t)=0\}=\emptyset,
\end{align*}
which shows $A_*(t)=\{u(\cdot,t)>0\}$ and
\begin{align*}
  \Lambda[u(\cdot,t)]= \fint_{A_*(t)}g
  &= \fint_{\{u(\cdot,t)>0\}}g=\lambda(t).
\end{align*}
\end{proof}

\subsection{Proofs of Theorems \ref{main.p3} and \ref{1thm:main2}} \label{NMS}

Throughout this section, let $u$ be a solution to \eqref{eq:oblamalt1}- \eqref{eq:oblamalt4}.
We also recall that we only assume \eqref{eq:nondegeneracy2} but not \eqref{eq:nondegeneracy1}.

% As we have already mentioned in Remark \ref{cor:expl2}, if \eqref{eq:nondegeneracy1} fails in a set of positive measure, or equivalently \eqref{violation2} is valid, one cannot expect any continuity properties for the positivity set $\{u(\cdot,t)>0\}$ nor for the function $\lambda$.
% Our goal is to derive precise estimates for the corresponding jumps for short times.

\medskip
Motivated by Remark \ref{expl1}, our strategy is to approximate $u$ by a solution to \eqref{eq:oblam1}-\eqref{eq:oblam4} with suitably modified initial data $u_n^0$.
The latter are chosen such that they, in particular, converge uniformly to $u_0$ but such their support, on the other hand, approximates $A_*^0$, i.e.
\begin{equation*}
  u_n^0 \searrow u_0 \quad\text{ uniformly on }\Gamma,\quad
  A_*^0 =\bigcap_{n\in\N} \{u_n^0>0\}.
\end{equation*}
A key property of the modified solutions will be that we can apply the continuity results obtained in Section \ref{Continuity Section}.

\medskip
We first fix a suitable family of initial data $u^0_n$ and describe specific properties that we can obtain.
We denote in the following
\begin{equation*}
  \lambda^0_n := \fint_{\{u^0_n>0\}}g\,dS.
\end{equation*}

\begin{lemma}\label{eta data regular}
Let $g \in C^2(\Gamma)$ and assume that the set $\{u_0>0\}$ is regular according to \eqref{eq:nondegeneracy2}.
There exists a non increasing sequence $(\gamma_n)_n$ of numbers with
\begin{equation}\label{etaseq properties}
  \gamma_n \searrow 0 \;\; \text{as } n\to \infty
\end{equation}
and a sequence $(u_n^0)_{n\in\N}$ of nonnegative functions in $C^2(\Gamma)$ such that the following properties hold for all $n\in\N$:
\begin{enumerate}[label={(\arabic*)}]
  \item $u_0 \leq u^0_{n+1} \leq u^0_n$. \label{first}
  \item $\{u^0_n>0\} \supset \{ u^0_{n+1}>0 \}$. \label{second}
  \item $u^0_n>0$ in $\{g\geq \Lambda[u_0]-{\gamma_n}\}$. \label{third}
  \item $u^0_n=0$ in $\{g \leq \Lambda[u_0]-2{\gamma_n}\} \cap \{u_0=0\}$. \label{fourth}
  \item The set $\{u^0_n>0 \}$ is regular.
  \label{sixth}
  \item $\displaystyle
  \vert \lambda_n^0-\Lambda[u_0] \vert < \frac{{\gamma_n}}{4}$.
  \label{eighth}
  \item $\|u^0_n-u_0\|_{C^0(\Gamma)}\leq \gamma_n$.
  \label{nine}
\end{enumerate}
and such that
\begin{enumerate}[resume,label={(\arabic*)}]
  \item $\big| \{u^0_n>0 \} \setminus A_*^0  \big| \to 0\; \quad \text{as } n \to \infty$ \label{fifth}
\end{enumerate}
Moreover, for any $\eta>0$ there exists $n^*=n^*(\eta)$ such that
\begin{enumerate}[resume,label={(\arabic*)}]
  \item
  $\displaystyle \{ u^0_n>0\} \subset \big (A_*^0 \big )_{+\eta}$\quad for any $n \geq n^*$.
  \label{seventh}
\end{enumerate}
\end{lemma}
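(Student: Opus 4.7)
The plan is to build $u_n^0$ as a small $C^2$-perturbation of $u_0$ obtained by switching on a tiny positive bump precisely on the super-level set $\{g>\Lambda[u_0]-2\gamma_n\}$. Fix once and for all a $C^2$ nonnegative function $\phi\colon\R\to[0,\infty)$ with $\{\phi>0\}=(0,\infty)$, for example $\phi(s):=(\max(s,0))^3$. Given a decreasing sequence $\gamma_n\searrow 0$ (to be specified), set
\begin{equation*}
u_n^0(x):=u_0(x)+b_n\,\phi\big(g(x)-(\Lambda[u_0]-2\gamma_n)\big),
\end{equation*}
with $b_n>0$ a decreasing sequence chosen small enough that the bump term is bounded by $\gamma_n$ uniformly on $\Gamma$ (possible since $g$ is bounded on the compact manifold). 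Then $u_n^0\in C^2(\Gamma)$ and the pointwise monotonicity $u_0\leq u_{n+1}^0\leq u_n^0$ holds, giving (1) (and hence (2)), while the uniform bound on the perturbation yields (7). Since $\{u_n^0>0\}=\{u_0>0\}\cup\{g>\Lambda[u_0]-2\gamma_n\}$, properties (3) and (4) follow by inspection.

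For the regularity condition (5), I would note that $\partial\{u_n^0>0\}\subset\partial\{u_0>0\}\cup\{g=\Lambda[u_0]-2\gamma_n\}$. The first set has $\calH^2$-measure zero by assumption \eqref{eq:nondegeneracy2}; for the second, Sard's theorem applied to $g\in C^2(\Gamma)$ ensures that the critical values of $g$ form a null subset of $\R$, so $(\gamma_n)_n$ can be picked off this exceptional set so that each $\Lambda[u_0]-2\gamma_n$ is a regular value of $g$. Then $\{g=\Lambda[u_0]-2\gamma_n\}$ is either empty or a $C^2$ one-dimensional submanifold of $\Gamma$, in particular of vanishing $\calH^2$-measure.

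The convergence assertions (6), (8), and (9) all reflect the fact that $\{u_n^0>0\}\to A_*^0$ as $n\to\infty$. Writing $m(\gamma):=|\{u_0=0\}\cap\{\Lambda[u_0]-\gamma<g<\Lambda[u_0]\}|$, the inclusion $\{u_n^0>0\}\setminus A_*^0\subset\{\Lambda[u_0]-2\gamma_n<g<\Lambda[u_0]\}$ combined with continuity of measure gives $|\{u_n^0>0\}\setminus A_*^0|\leq m(2\gamma_n)\to 0$, i.e.\ (8). A short computation using $\fint_{A_*^0}g=\Lambda[u_0]$ from Lemma \ref{well-posed} yields
\begin{equation*}
|\lambda_n^0-\Lambda[u_0]|\;\leq\;\frac{2\gamma_n\,m(2\gamma_n)}{|A_*^0|},
\end{equation*}
so (6) holds provided $\gamma_n$ is small enough that $m(2\gamma_n)<|A_*^0|/8$. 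Finally, (9) is a standard compactness argument: any sequence $x_k\in\{g>\Lambda[u_0]-2\gamma_{n_k}\}$ staying at distance $\geq\eta$ from $A_*^0$ would have an accumulation point $x_*$ satisfying $g(x_*)\geq\Lambda[u_0]$, hence $x_*\in A_*^0$, contradicting the distance bound.

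The main technical point is to coordinate the choice of $(\gamma_n)_n$ so that all eight conditions hold simultaneously; once Sard's theorem supplies regularity of the super-level sets, the rest is essentially bookkeeping.
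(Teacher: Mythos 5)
Your construction is correct, and it follows the same overall strategy as the paper's proof: perturb $u_0$ by a small nonnegative bump that is positive precisely on a super-level set $\{g>r_n\}$ with $r_n$ a regular value of $g$ near $\Lambda[u_0]$, pick the regular value via Morse--Sard so that item~\ref{sixth} holds, and then carry out the averaging computation via the characterization $\fint_{A_*^0}g=\Lambda[u_0]$ from Lemma~\ref{well-posed}. The one place where you diverge is in how the bump is built: the paper constructs abstract smooth cutoffs $\zeta_n$ (satisfying $\zeta_n=1$ in $\{g>r_{n+1}\}$, $\zeta_n\in(0,1]$ on $\{r_n<g\le r_{n+1}\}$, $\zeta_n=0$ on $\{g\le r_n\}$) by invoking the auxiliary Lemma~\ref{ind claim}, and sets $u_n^0=u_0+\gamma_n\zeta_n$, whereas you take the explicit composition $u_n^0=u_0+b_n\,\phi(g-(\Lambda[u_0]-2\gamma_n))$ with $\phi(s)=(\max(s,0))^3\in C^2$. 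Your choice is a genuine streamlining: it makes the pointwise monotonicity~\ref{first} an immediate consequence of the monotonicity of $\phi$ and of $b_n\searrow$, and it avoids Lemma~\ref{ind claim} entirely. One minor point worth stating explicitly is the coordination that the three constraints on $\gamma_n$ (Sard's null set, $m(2\gamma_n)<|A_*^0|/8$, monotone decrease to~$0$) are simultaneously realizable, which you do flag at the end; this is indeed routine since the Sard constraint only removes a Lebesgue-null set of admissible values and $m(2\gamma)\to 0$.
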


We prove this lemma in Appendix \ref{A:initial_data}. Fixing sequences $(\gamma_n)_n$ and $(u_n^0)$ as in Lemma \ref{eta data regular} we now define our main approximation.

\medskip
{\bf Regularization:} Let $(u_n)_n$ be the unique solution of the problem
\begin{align}
  \partial_t u_n -\Delta u_n&= -\bigg(1-\frac{g}{\lambda_n(t)} \bigg )H(u_n), \quad &&\text{in } \Gamma_T \label{eta eq}\\
  g&\leq \lambda_n, \quad &&\text{a.e in } \{u_n=0\} \label{eta key step}
  \\
  u_n(\cdot,0)&=u^0_n &&\text{on } \Gamma, \label{eta data}
\end{align}
where
\begin{equation*}
  \lambda_n(t)=\fint_{\{u_n(\cdot,t)>0\}} g\;dS\;.
\end{equation*}
By \eqref{contraction} and items \ref{first}, \ref{nine} of Lemma \ref{eta data regular} we deduce that for all $n_1\geq n_2$ it holds
\begin{equation}
  u(\cdot,t)\leq u_{n_1}(\cdot,t) \leq u_{n_2}(\cdot,t)\quad\text{ for all }t\geq 0,
  \label{eq:un-monoton}
\end{equation}
and by \cite[Theorem 3.1]{LNRV21}
\begin{equation}
  u_n\to u \quad\text{ in }C^0([0,T],L^1(\Gamma))\text{ for all }T>0\;.
  \label{eq:conv-un}
\end{equation}
Now we proceed to the proof of the main result of this Section, Theorem \ref{main.p3}.

\begin{proof}[Proof of Theorem \ref{main.p3}]
As we already mentioned, our aim is to approximate \eqref{eq:oblam1}-\eqref{eq:oblam4} by the regularized problem  \eqref{eta eq}-\eqref{eta data} and then pass to the limit as $n\to \infty$.
The strategy of the proof consists of six steps.
More specifically, in the first five steps we derive upper and lower estimates for the sequence of functions $\lambda_n$ and the positivity sets $\{u_n(\cdot,t)>0\}$.
In the sixth step we consider the limit $n \to \infty$.

\medskip
{\bf Step 1:} First we will prove that there exists a modulus of continuity $\hat\delta_1$ and for any $r>0$ a $t_1=t_1(r)>0$ such that
\begin{align}
  \lambda_n(t) &\leq \Lambda \big [u_0 ]+\hat \delta_1(r)\;,
  \label{initial jumpreg}\\
  \{u_0>r \} &\subset \Big\{u_n(\cdot,t)>\frac{r}{2} \Big\}, \label{lower est noeta1}
\end{align}
for all $n\in \mathbb N$ and for all $0\leq t \leq t_1(r)$.

\medskip
We will derive the uniform inclusion property \eqref{lower est noeta1} by means of a suitable subsolution.
To this end, we define $\tilde u(x,t):=S(t)u_0(x)-t$ where $S(t)$ denotes the heat semigroup on $\Gamma$ and we calculate
\begin{equation*}
  \partial_t \tilde u -\Delta \tilde u=-1< -\bigg(1-\frac{g}{\lambda_n(t)} \bigg )H(u_n)=\partial_t u_n -\Delta u_n
\end{equation*}
in $\Gamma_T$.
Furthermore, $ \tilde u(\cdot,0)=u_0\leq u_n(\cdot,0)$ on $\Gamma$ by Lemma \ref{eta data regular}, item \ref{first}.
Hence, we obtain by a comparison principle argument that
\begin{equation}\label{voitheia}
  u_n(\cdot,t) \geq S(t)u_0-t \quad \text{ in } \Gamma_T\;.
\end{equation}
Now, since $S(t)u_0-t \to u_0$ as $t\to 0^+$, it follows that for all $r>0$ there exists $t_1(r)>0$ such that
\begin{equation*}
  u_n(\cdot,t) > \frac{r}{2}>0 \quad \text{in } \{u_0>r \}\quad\text{ for all }n\in\N\text{ and } 0\leq t \leq t_1(r)\;,
\end{equation*}
which proves \eqref{lower est noeta1}.
Next, item $(1)$ of Proposition \ref{Lambda_properties1} and \eqref{eq:lambdas-ineq} in  Corollary \ref{Lambda properties2} yield
\begin{equation}
  \Lambda \big [\{u_0>r \} \big ] \geq \Lambda \big [\{u_n(\cdot,t)>0 \} \big ]
  \geq \lambda_n(t).
  \label{eq:4.25a}
\end{equation}
By the second item of Proposition \ref{Lambda_properties1} and Lemma \ref{-delta set}, we conclude that \eqref{initial jumpreg} holds.

\medskip
{\bf Step 2:} Let $t_1$ be as in Step 1.
For any $\sigma>0$ there exist $r_1(\sigma)>0$, $0<t_2(\sigma)\leq t_1(r_1(\sigma))$ and a positive function $\omega^*_2:[0,\infty)^2\to\R^+$ such that for all $t \leq t_2(\sigma)$ we have
\begin{equation}
  \overline{\{g>\Lambda[u_0]+\sigma\}} \subset \big\{u_n(\cdot,t)>\omega^*_2(\sigma,t)\big\}\quad
  \text{ for all }n\in\N.
  \label{eq:step2a}
\end{equation}

\medskip
Consider the set $A_\sigma:= \{g>\Lambda[u_0]+\gamma\sigma\}$, where $0<\gamma<1$ is chosen such that $\Lambda[u_0]+\gamma\sigma$ is a regular value of $g$.
Then $A_\sigma$ has a $C^2$-regular boundary and it holds
\begin{equation*}
   A_\sigma\ssubset \{g>\Lambda[u_0]+\sigma\}.
\end{equation*}
By \eqref{initial jumpreg}, we calculate that in $A_\sigma \times [0,t_1(r)]$ it holds
\begin{align*}
  \partial_t u_n -\Delta u_n
  = -\bigg(1-\frac{g}{\lambda_n(t)} \bigg )H(u_n) &\geq \bigg (-1+\frac{\Lambda[u_0]+\sigma}{\Lambda[u_0]+\hat \delta_1(r)} \bigg )H(u_n)\\
  &=\bigg (\frac{\sigma-\hat \delta_1(r)}{\Lambda[u_0]+ \hat \delta_1(r)} \bigg )H(u_n)\;.
\end{align*}
For sufficiently small $r_1=r_1(\sigma)>0$ and $t_1=t_1(r_1(\sigma))$ from Step 1 we obtain
\begin{align*}
  \partial_t u_n -\Delta u_n \geq \frac{1}{2} \frac{\sigma}{\Lambda[u_0]}H(u_n)\; \quad \text{in }\;  A_\sigma\times [0, t_1].
\end{align*}
Due to the fact that $u_n^0>0$ in $\{g\leq\Lambda[u_0]\}$ by Lemma \ref{eta data regular}, item \ref{third}, there exists a maximal $0<\hat t_n\leq t_1$ such that $u_n>0$ in $ A_\sigma\times [0, \hat t_n)$.
Therefore, we find that
\begin{align*}
  \partial_t u_n -\Delta u_n \geq \frac{1}{2} \frac{\sigma}{\Lambda[u_0]}\; \quad \text{in }\;  A_\sigma\times [0, \hat t_n).
\end{align*}

We construct a suitable subsolution of $u_n$.
To this end, we let $U$ denote the solution of
\begin{align*}
  \partial_t U -\Delta U&= \frac{1}{2} \frac{\sigma}{\Lambda[u_0]}\; \quad &&\text{in } \;A_\sigma \times (0,\hat t_n]\;,\\
  U&=0\; \quad &&\text{in }\; \partial A_\sigma \times (0,\hat t_n]\;,\\
  U(\cdot,0)&=0\; \quad &&\text{in }\; A_\sigma\;.
\end{align*}
It follows immediately that $U>0$ in $ A_\sigma \times [0,\hat t_n]$.
Due to Lemma \ref{eta data regular}, item \ref{third}, we obtain that $u^0_n> U(\cdot,0)$ in $\overline{  A_\sigma}$ and moreover that $u_n \geq U$ on $\partial   A_\sigma \times [0,\hat t_n]$.
A comparison argument yields then that $u_n\geq U$ in $\overline{ A_\sigma} \times [0,\hat t_n]$.

\medskip
Now assume that $\hat t_n<t_1$. Then $\min_{\overline{A_\sigma}}u_n(\cdot,\hat t_n)=0$.
Since $u_n(\cdot,\hat t_n)\geq U(\cdot,\hat t_n)>0$ in $A_\sigma$  there exists a boundary point $\hat x \in \partial A_\sigma$ with
\begin{equation*}
  u_n(\hat x,\hat t_n)=0=U(\hat x,\hat t_n).
\end{equation*}
However, using the parabolic Hopf lemma, $U(\cdot,\hat t_n)>0$ in $A_\sigma$ and $0=\min_\Gamma u_n$ we deduce that
\begin{equation*}
  0 < \nabla \big (U(\hat x,\hat t_n)-u_n(\hat x,\hat t_n)\big) \cdot \nu \leq 0,
\end{equation*}
which is a contradiction.

Therefore, $u_n\geq U$ in  $\overline {A_\sigma} \times [0,t_1]$ and in particular we infer a uniform lower estimate for $u_n$, that is
\begin{equation}
  u_n(\cdot,t) \geq \omega^*_2(\sigma,t)
  \quad\text{ in }\;\overline{\{g>\Lambda[u_0]+\sigma\}} \times (0,t_1]
  \quad\text{ for all }n\in\N\;,
  \label{UnLB2}
\end{equation}
where
\begin{equation*}
  \omega^*_2(\sigma,t) := \inf_{\{g>\Lambda[u_0]+\sigma\}}U(\cdot,t)>0,\quad t\in  (0,t_1]
\end{equation*}
is positive by \eqref{eq:4.25a} and $U>0$ in $A_\sigma\times [0,\hat{t}_n]$.

This yields \eqref{eq:step2a} and finishes Step 2 of the proof.

\medskip
{\bf Step 3:} Next we will show that for any $\eta>0$ there exists $t_3(\eta)$ such that
\begin{equation}
  \Big(\{u_0>0\} \cup \{g>\Lambda[u_0]\} \Big)_{-\eta}  \subset \{u_n(\cdot,t)>0\}\; \quad \text{for all }\; 0\leq t \leq t_3(\eta) \label{inclusions reg}
\end{equation}
and for all $n\in\N$.

In fact, for $\sigma>0$ and $r=r_1(\sigma)$ we have by \eqref{lower est noeta1} and \eqref{eq:step2a} that
\begin{align}
  \big\{u_0>r_1(\sigma)\big\}\cup \big\{g>\Lambda[u_0]+\sigma\big\}
  &= \big\{ (u_0-r_1(\sigma))_++(g-\Lambda[u_0]-\sigma)_+>0\big\}\nonumber\\
  &\subset \{u_n(\cdot,t)>0\}\quad\text{ for all }0\leq t\leq t_2(\sigma).
  \label{eq:step2b}
\end{align}
On the other hand, Lemma \ref{-delta set} yields for some modulus of continuity $\delta$ for any $0\leq t\leq t_2(\sigma)$
\begin{align}
  \Big(\{u_0>0\} \cup \{g>\Lambda[u_0]\}\Big)_{-\eta}
  &=\Big(\{u_0+(g-\Lambda[u_0])_+>0\}\Big)_{-\eta}\nonumber\\
  &\subset \big\{u_0+(g-\Lambda[u_0])_+>\delta(\eta)\big\}\nonumber\\
  &\subset \big(\big\{u_0>\delta(\eta)/2\}\cup  \{g>\Lambda[u_0]+\delta(\eta)/2\big\}\big).
  \label{eq:step2c}
\end{align}
Choosing $\sigma=\sigma(\eta)>0$ sufficiently small such that $\sigma,r_1(\sigma)<\delta(\eta)/2$ and setting $t_3(\eta)=t_2(\sigma)$ we deduce from \eqref{eq:step2b}, \eqref{eq:step2c} the inclusion property \eqref{inclusions reg}.

\medskip
{\bf Step 4:} Following Step 3, we will show that for any $\eta>0$ there exists $t_4(\eta)$ such that, with $n^*=n^*(\eta)$ as in item \ref{seventh} of Lemma \ref{eta data regular}, for all $n\geq n^*(\eta)$
\begin{equation}
  \{u_n(\cdot,t)>0\}\subset
  \Big(\{u_0>0\} \cup \{g\geq \Lambda[u_0]\}\Big )_{+2\eta}\;
  \quad \text{ for all }t \leq t_4(\eta)\;. \label{inclusions reg'}
\end{equation}

\medskip
To prove this claim let $\eta>0$ be given and let $n^*=n^*(\eta)$ be as in item \ref{seventh} of Lemma \ref{eta data regular}.
Due to the items \ref{first} and \ref{seventh} of Lemma \ref{eta data regular} we obtain that
\begin{equation}\label{n.inclusions1sides}
  \begin{aligned}
    &\big(\{u_0>0 \}\big)_{-\eta}\subset \big(\{u^0_{n^*}>0 \}\big)_{-\eta} \notag \\
    &\big( \{u^0_{n^*}>0\} \big )_{+\eta}\subset \big (\{u_0>0 \} \cup \{g \geq \Lambda[u_0] \}\big)_{+ 2\eta} \notag.
  \end{aligned}
\end{equation}
By the monotonicity property \eqref{eq:un-monoton} we also have $u_{n}(\cdot,t) \leq u_{n^*}(\cdot,t)$ for any $n\geq n^*$ and for all $t \geq 0$. This in particular implies that for any $n\geq n^*$
\begin{equation}\label{toumpa1}
  \{u_n(\cdot,t)>0\} \subset \{u_{n^*}(\cdot,t)>0\}\quad\text{ for all }t\geq 0.
\end{equation}

\medskip
As mentioned in the beginning of this subsection, we will take advantage of the continuity results in Section \ref{Continuity Section} for both $u_{n^*}$ and $\lambda_{n^*}$ at $t=0$.
In order to apply Theorem \ref{3thm:main1} we need that $\lambda^0_{n^*}$ and the initial data $u_{n^*}^0$ satisfy the conditions \eqref{eq:nondegeneracy1} and \eqref{eq:nondegeneracy2}, that is
\begin{equation}\label{suff cond}
  g<\lambda^0_{n^*}-\theta_{n^*} \;\text{ in } \{u^0_{n^*}=0\}\qquad \text{and} \qquad \text{the set } \; \{u^0_{n^*}>0 \} \;\text{ is regular }
\end{equation}
for some $\theta_{n^*}>0$.
By Lemma \ref{eta data regular}, item \ref{fifth} the second condition is fulfilled.
Moreover by Lemma \ref{eta data regular}, item \ref{third} and item \ref{eighth} we obtain that
\begin{equation}\label{pass}
  g<\Lambda[u_0]-\gamma_{n^*} \leq \lambda^0_{n^*} -\gamma_{n^*}+\big |\lambda^0_{n^*}-\Lambda[u_0]\big |\leq \lambda^0_{n^*}-\frac{3{\gamma_{n^*}}}{4}\quad \text{in } \{u^0_{n^*}=0 \}\;.
\end{equation}
Hence, the first statement in \eqref{suff cond} is satisfied with  $\theta_{n^*}=\frac{3{\gamma_{n^*}}}{4}>0$
We deduce by Theorem \ref{3thm:main1} that there exists $t_4(\eta):=t^*(\eta; \theta_{n^*},u^0_{n^*},g)>0$ such that
\begin{align}
  |\lambda_{n^*}(t)-\lambda^0_{n^*}| &<\eta, \;\label{lambdan* continuity}\\
  \{u_{n^*}(\cdot,t)>0\} &\subset \big(\{u^0_{n^*}>0 \}\big)_{+\eta}
  \label{un* continuity}
\end{align}
for all $t \leq t_4(\eta)$.
Combining this and \eqref{n.inclusions1sides}, \eqref{toumpa1} we conclude that for any $n\geq n^*$ the inclusion \eqref{inclusions reg'} holds.

\medskip
{\bf Step 5:} We justify a uniform lower bound for $\lambda_n$.
More precisely, for any $\eta>0$ and $n^*=n^*(\eta)$, $t_4(\eta)$ as chosen in Step 4 there exists a modulus of continuity $\hat\omega$ independent of $n\geq n^*$ such that for any $n\geq n^*$
\begin{equation}
  \lambda_n(t)\geq \Lambda[u_0] - \hat \omega(\eta) \quad \text{for all } 0\leq t \leq t_4(\eta)\; \label{initial jumpreg'}
\end{equation}
holds.

\medskip
Let us fix in the following an arbitrary $t\in [0,t_4(\eta)]$.
The key idea here is to rewrite $\{u_n(\cdot,t)>0\}$ as a union of disjoint sets.
More precisely, we write
\begin{equation}
  \{u_n(\cdot,t)>0\} = A_n \cup B_n \cup C_n
  \label{suit union}
\end{equation}
with
\begin{align*}
  A_n &:= \{u_n(\cdot,t)>0\}\cap \big(  \{u_0>0\} \cup \{g>\Lambda[u_0] \}  \big),\\
  B_n &:= \{u_n(\cdot,t)>0\}\cap \big(  \{u_0=0\} \cap \{g=\Lambda[u_0] \}  \big),\\
  C_n &:= \{u_n(\cdot,t)>0\}\cap \big(  \{u_0=0\} \cap \{g<\Lambda[u_0] \}  \big).
\end{align*}

Furthermore, by \eqref{inclusions reg} and \eqref{inclusions reg'} we notice that for all $0\leq t \leq t_4(\eta)$
\begin{align}
  A_n &\supset \big(\{u_0>0 \} \cup \{g>\Lambda[u_0] \} \big)_{-\eta}
  \quad \text{for all }\;n\;, \label{b1} \\
  C_n &\subset \big(\{u_0>0 \} \cup \{g \geq \Lambda[u_0] \} \big)_{+2\eta} \cap  \big(  \{u_0=0\} \cap \{g<\Lambda[u_0] \}  \big) \quad \text{for all } \;n\geq n^*. \label{b2}
\end{align}

\medskip
Due to \eqref{suit union} ,\eqref{b1} and the fact that $0<g<1$ we derive for all $n$
\begin{align}
  \lambda_n(t)&=\frac{1}{|A_n|+|B_n|+|C_n|} \bigg ( \int \limits_{A_n}g\;dS+ \int \limits_{B_n}g\;dS+\int \limits_{C_n}g\;dS \bigg) \notag\\
  &\geq \frac{1}{|A_n|+|B_n|+|C_n|} \bigg ( \int \limits_{A_n}g\;dS+ \int \limits_{B_n}g\;dS \bigg) \notag \\
  &\geq \frac{1}{|A_n|+|B_n|+|C_n|} \bigg ( \int_{\big(\{u_0>0 \} \cup \{g>\Lambda[u_0] \} \big)_{-\eta} }g\;dS+ \Lambda[u_0]|B_n| \bigg)  \label{calc1}
\end{align}
Since by definition $A_n \subset \big(\{u_0>0\} \cup \{g>\Lambda[u_0] \}  \big)$, it follows that
\begin{equation}
  |A_n| \leq \big |  \{u_0>0\} \cup \{g>\Lambda[u_0] \}  \big |\;.
  \label{b1'}
\end{equation}
Moreover, \eqref{b2} and Lemma \ref{pm delta union} yield that for all $n\geq n^*$
\begin{align*}
C_n &\subset \big( ( \{u_0>0 \})_{+2\eta}  \cap  \{u_0=0\} \big ) \cup \big(  (\{g \geq \Lambda[u_0] \} )_{+2\eta}   \cap   \{g<\Lambda[u_0] \}  \big) \\
&=\big( ( \{u_0>0 \})_{+2\eta}  \setminus  \{u_0>0\} \big ) \cup \big(  (\{g \geq \Lambda[u_0] \} )_{+2\eta}   \setminus   \{g\geq \Lambda[u_0] \}  \big)\;.
\end{align*}
This in turn implies, by \eqref{eq:nondegeneracy2a} and \eqref{eq:B2-closed} that
\begin{align}
  |C_n| &\leq   \big| ( \{u_0>0 \})_{+2\eta}  \setminus  \{u_0>0\} \big | + \big|  (\{g \geq \Lambda[u_0] \} )_{+2\eta}   \setminus   \{g\geq \Lambda[u_0] \}  \big| \leq \omega_1(\eta) \; \label{b2'}
\end{align}
for some $\omega_1(\eta) \to 0$ as $\eta \to 0$.

\medskip
By \eqref{b1'} and \eqref{b2'} we deduce that for all $n\geq n^*$
\begin{equation}
  \frac{1}{|A_n|+|B_n|+|C_n|} \geq \frac{1}{\big |  \{u_0>0\} \cup \{g>\Lambda[u_0] \}  \big |+|B_n|+\omega_1(\eta)}\;.
  \label{subcalc1}
\end{equation}
In addition, the expression inside the parentheses on the right-hand side of \eqref{calc1} can be estimated below by
\begin{equation}\label{subcalc2}
  \bigg ( \dots \bigg )\geq \int_{\{u_0>0 \} \cup \{g>\Lambda[u_0] \} }g\;dS + \Lambda[u_0]|B_n| -(\max g) |D_\eta|\;,
\end{equation}
where $D_\eta:=\big(\{u_0>0 \} \cup \{g>\Lambda[u_0] \} \big)\setminus \big(\{u_0>0 \} \cup \{g>\Lambda[u_0] \} \big)_{-\eta}$.
We observe that by \eqref{eq:B2-open}
\begin{equation}\label{step}
  |D_\eta| \leq \omega_2(\eta)
\end{equation}
for some modulus of continuity $\omega_2$.

\medskip
We can plug now \eqref{subcalc1}, \eqref{subcalc2} and \eqref{step}  into \eqref{calc1} and obtain
\begin{align*}
  \lambda_n(t) &\geq  \frac{1}{\big |  \{u_0>0\} \cup \{g>\Lambda[u_0] \}  \big |+|B_n|+\omega_1(\eta)}  \bigg (   \int_{\big(\{u_0>0 \} \cup \{g>\Lambda[u_0] \} \big)\cup B_n}g\;dS -\max g\;\omega_2 (\eta)  \bigg)\;.
\end{align*}

By \eqref{initialdata}, there is a positive constant $M>0$ such that
\begin{equation*}
  \big |  \{u_0>0\} \cup \{g>\Lambda[u_0] \}  \big |+|B_n| \geq M>0.
\end{equation*}
Then we deduce that
\begin{align}\label{1stepAway}
  \lambda_{n}(t) \geq \fint_{\big(\{u_0>0 \} \cup \{g>\Lambda[u_0] \} \big)\cup B_n} g\;dS-\hat \omega(\eta) \;,
\end{align}
for some modulus of continuity $\hat \omega$.
We set $\tilde B_n:=\big(\{u_0>0 \} \cup \{g>\Lambda[u_0] \} \big)\cup B_n$ and we write $A_*^0=\big(A_*^0 \setminus \tilde B_n\big) \cup \tilde B_n$.
Due to \eqref{jumpset defn1}, we further observe that $P_n:=A_*^0 \setminus \tilde B_n \subset \{g=\Lambda[u_0] \}$.
This in turn implies that
\begin{align*}
  \fint_{A_*^0} g\;dS &=\frac{1}{|\tilde B_n|+|P_n|}\bigg( |\tilde B_n|\fint_{\tilde B_n} g\;dS+|P_n|\fint_{P_n} g\;dS \bigg)\\
  &=\frac{1}{|\tilde B_n|+|P_n|}\bigg( |\tilde B_n|\fint_{\tilde B_n} g\;dS+|P_n|\Lambda[u_0] \bigg).
\end{align*}
Since the maximum in \eqref{biglambda defn} is attained by the set $A^0_*$, it holds that
\begin{align*}
  (|\tilde B_n|+|P_n|)\Lambda[u_0]=|\tilde B_n|\fint_{\tilde B_n} g\;dS+|P_n|\Lambda[u_0]
\end{align*}
and $\fint_{\tilde B_n} g\;dS=\Lambda[u_0]$.
Therefore, we conclude by \eqref{1stepAway} that
\begin{align*}
 \lambda_{n}(t) \geq  \Lambda[u_0]-\hat \omega (\eta)\;.
\end{align*}

\medskip
{\bf Step 6:} To complete this proof, it remains to show \eqref{newpositivityset} and \eqref{initial jump}.

By \eqref{eq:un-monoton} we have $u_n\geq u$ for all $n$.
Due to \eqref{inclusions reg'}, setting $t_5(\eta)=t_4(\eta/2)$ this immediately yields the right inclusion in \eqref{newpositivityset}, that is
\begin{equation*}
  \{u(\cdot,t)>0\} \subset \{u_{n}(\cdot,t)>0\}  \subset \big(\{u_0>0 \} \cup \{g \geq \Lambda[u_0] \} \big)_{+\eta},
\end{equation*}
for all $t\in [0,t_5(\eta)]$.

In order to obtain the left inclusion, we deduce from \eqref{eq:conv-un} and \eqref{lower est noeta1}, \eqref{eq:step2a} that for all $t\in [0,t_4(\eta)]$ we have
\begin{align*}
  \Big(\{u_0>0\} \cup \{g>\Lambda[u_0]\}\Big)_{-\eta}
  &\subset \Big\{u_0>\frac{\delta(\eta)}{2}\Big\}\cup  \Big\{g>\Lambda[u_0]+\frac{\delta(\eta)}{2}\Big\}\\
  &\subset \Big\{u_n(\cdot,t)>\frac{\delta(\eta)}{4}\Big\}\cup  \Big\{u_n(\cdot,t)>\omega_2^*\Big(\frac{\delta(\eta)}{2},t\Big)\Big\}.
\end{align*}
Moreover, using \eqref{eq:conv-un} and passing with $n\to\infty$ to the limit gives
\begin{equation}
  \Big(\{u_0>0\} \cup \{g>\Lambda[u_0]\}\Big)_{-\eta}
  \subset \Big\{u(\cdot,t)\geq \min\Big\{\frac{\delta(\eta)}{4},\omega_2^*\Big(\frac{\delta(\eta)}{2},t\Big)\Big\}\Big\}
  \label{eq:inclusion-1}
\end{equation}
for all $t\in [0,t_4(\eta)]$. This justifies the left inclusion in \eqref{newpositivityset}.

\bigskip

For proving \eqref{initial jump}, we argue as in the Step $1$ and Step $5$ in order to obtain upper and lower bounds for $\lambda(t)-\Lambda[u_0]$ in terms of \eqref{newpositivityset}.
More precisely, for the upper bound we deduce by the left inclusion in \eqref{newpositivityset} and Lemma \ref{pm delta union} that
\begin{equation*}
  \{u_0>0\}_{-\eta} \subset \{u_0>0\}_{-\eta} \cup \{g>\Lambda[u_0]\}_{-\eta} \subset \{u(\cdot,t)>0\} \quad \text{for all }\; t\in [0,\bar t(\eta)]\;.
\end{equation*}
Similar to Step $1$, we deduce using the first item of Proposition \ref{Lambda_properties1} and \eqref{eq:lambdas-ineq} in  Corollary \ref{Lambda properties2} that
\begin{equation*}
  \Lambda \big [\{u_0>0 \}_{-\eta} \big ] \geq \Lambda \big [\{u(\cdot,t)>0 \} \big ] \geq \lambda(t).
\end{equation*}
Then, the second item of Proposition \ref{Lambda_properties1} and Lemma \ref{-delta set} yield the existence of a modulus of continuity $\omega_3$ such that $\lambda(t)-\Lambda[u_0]\leq \omega_3(\eta)$ holds for all $t\in [0,\bar t(\eta)]$.
After possibly enlarging $\eta$ and redefining $\bar t(\eta)$ we deduce
\begin{equation*}
  \lambda(t)-\Lambda[u_0] \leq \eta\, \quad \text{for all }0 < t \leq \bar t(\eta)\; .
\end{equation*}

For the lower bound, we rewrite $\{u(\cdot,t)>0\}$ as the following union of disjoint sets
\begin{equation*}
  \{u(\cdot,t)>0\} = A \cup B \cup C
\end{equation*}
with
\begin{align*}
  A &:= \{u(\cdot,t)>0\}\cap \big(  \{u_0>0\} \cup \{g>\Lambda[u_0] \}  \big),\\
  B &:= \{u(\cdot,t)>0\}\cap \big(  \{u_0=0\} \cap \{g=\Lambda[u_0] \}  \big),\\
  C &:= \{u(\cdot,t)>0\}\cap \big(  \{u_0=0\} \cap \{g<\Lambda[u_0] \}  \big).
\end{align*}
Following the same line of arguments as in Step $5$, we infer by means of \eqref{newpositivityset} the existence of a modulus of continuity $\omega_4$
\begin{equation*}
  \lambda(t)-\Lambda[u_0] \geq -\omega_4(\eta)\, \quad \text{for all }0 < t \leq \bar t(\eta)\; .
\end{equation*}
Again, possibly enlarging $\eta$ and redefining $\bar t(\eta)$ we deduce $\lambda(t)-\Lambda[u_0] \geq -\eta$.
This completes the proof of \eqref{initial jump}.
\end{proof}

We finally give the proof of the simplified version Theorem \ref{1thm:main2} of our results stated in the introduction.

\begin{proof}[Proof of Theorem \ref{1thm:main2}]
The convergence of $\lambda(t)$ with $t\searrow 0$ follows from \eqref{initial jump}.

By \eqref{eq:nondegeneracy1} and the additional assumption $|\{g=\Lambda[u_0]\}|=0$ we obtain from \eqref{newpositivityset} that
\begin{equation*}
  (A_*^0)_{-2\eta} \subset \{u(\cdot,t)>0\} \subset (A_*^0)_{+2\eta}
\end{equation*}
for all $0<t<\bar{t}(\eta)$.
From this property and from \eqref{eq:inclusion-1}, which is analogue to \eqref{inclusions2}, we deduce the convergence of supports exactly as in the proof of Theorem \ref{1thm:main1}.
Together with Corollary \ref{cor:expl2} this proves Theorem \ref{1thm:main2}.
\end{proof}

%\section{Outlook: Initial jump for degenerate data if the second nondegeneracy condition is violated}\label{sec:jump2}
%
%We stress that both \eqref{eq:nondegeneracy1-lambda} and \eqref{eq:nondegeneracy2} are necessary in order to obtain any continuity properties for the function $\lambda$.
%In fact, we will present in \cite{LNRVloading} an example of initial data $u_0$ for which \eqref{eq:nondegeneracy1} holds while \eqref{eq:nondegeneracy2} is invalid.
%We will prove that under these assumptions, the function $\lambda$ is not continuous at $t=0$ and the positivity set $\{u(\cdot,t)>0\}$ is oscillatory as $t\to 0^+$.
%
%Since the analysis is rather involved and lengthy at this point we will only sketch the main ideas and the key results.
%

\section{Comparison with the classical parabolic obstacle problem}
\label{sec:classical}

We collect here some analogous continuity and jump properties for the interfaces associated to the classical parabolic free boundary problem \eqref{eq:obs-class1}-\eqref{eq:obs-class3}.
\bigskip

We define $H:\R\to\{0,1\}$, $H=\Chi_{(0,\infty)}$ as the characteristic function of the positive real numbers. Let $\mathcal U$ be a smooth compact manifold without boundary embedded in $\mathbb R^n$ and also let $T>0$. Then we can reformulate problem \eqref{eq:obs-class1}-\eqref{eq:obs-class3} as follows
\begin{equation}\label{fbpgR}
\begin{cases}
  &\partial_t u -\Delta u  =fH(u)\quad
	\text{ on } \mathcal U \times(0,T) \,
	\\
	&f \leq 0 \qquad \qquad \qquad \;\; \;\text{ a.e. in } \{u=0\}
	\\
	&u(\cdot,0)=u_0 \qquad \quad \;\;\;\;\; \text{  on } \mathcal U \,
\end{cases}
\end{equation}
where $f \in C( \mathcal {U}\times[0,T])$.

Clearly the only difference between this problem and the problem under consideration \eqref{eq:oblam1}-\eqref{eq:oblam4} (see also the equivalent formulation \eqref{eq:oblamalt2}-\eqref{eq:oblamalt4}), is the absence of the nonlocal term $\lambda$ which depends on the positivity set $\{u(\cdot,t)>0\}$.
\medskip

The well-posedness as well as the properties of the interfaces of \eqref{fbpgR} has been studied in detail over the past years.
For the continuity of the corresponding positivity set $\{u(\cdot,t)>0\}$, we state the following theorems.

\begin{theorem}\label{StefanCont}
Assume that $f \in  C(\mathcal U\times[0,T])$ for some $T>0$ and consider
nonnegative initial data $u_0 \in C(\mathcal U)$.
If in addition we assume that $f \leq -\theta$ for some fixed $\theta>0$ in a neighborhood of $\{u_0=0 \} \times [0,T]$, then the set $\{u(\cdot, t)>0\}$ is continuous at $t=0$ in the sense that for any $\eta>0$, there exists $t^*(\eta)>0$ such that
\begin{equation*}
\{u_0>0 \}_{-\eta} \subset \{u(\cdot,t)>0\} \subset \{u_0>0 \}_{+\eta}
\end{equation*}
for all $0 \leq t \leq t^*(\eta)$.
\end{theorem}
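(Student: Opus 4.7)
The plan is to follow the two-step strategy used in Section~\ref{Continuity Section} for the non-local problem, but exploit that the absence of the Lagrange multiplier makes everything considerably simpler: no regularization is needed, the nondegeneracy $f\leq -\theta$ is given uniformly on a whole spatio-temporal neighborhood of $\{u_0=0\}\times[0,T]$, and the barriers can be applied directly to $u$.

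For the \textbf{left inclusion} $\{u_0>0\}_{-\eta}\subset\{u(\cdot,t)>0\}$, I would argue essentially as in Step 1 of the proof of Theorem \ref{main.p3}. The set $\{u_0>0\}_{-\eta}$ is closed, hence compact, and contained in $\{u_0>0\}$, so by continuity of $u_0$ there exists $\delta_1(\eta)>0$ with $u_0\geq\delta_1(\eta)$ on this set. Since $f$ is continuous on the compact set $\mathcal U\times[0,T]$, it is bounded by some $M>0$; the function $w:=u+Mt$ satisfies $\partial_t w-\Delta w\geq 0$, and the parabolic maximum principle yields $u(\cdot,t)\geq \shift(t)u_0-Mt$, where $\shift(t)$ denotes the heat semigroup on $\mathcal U$. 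The strong continuity $\|\shift(t)u_0-u_0\|_{L^\infty}\to 0$ as $t\searrow 0$ (which holds for $u_0\in C(\mathcal U)$) then forces $u(x,t)\geq\delta_1(\eta)/2>0$ on $\{u_0>0\}_{-\eta}$ for all sufficiently small $t$.

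For the \textbf{right inclusion} $\{u(\cdot,t)>0\}\subset\{u_0>0\}_{+\eta}$, the key ingredient is a Brezis--Friedman type nondegeneracy lemma in the spirit of Lemma~\ref{L.usmall}: there exist $\rho_{\max}=\rho_{\max}(\mathcal U)$ and $A=A(\mathcal U)$ such that whenever $B_{2\rho}(x_0)\subset\{u_0=0\}$ with $\rho\leq\rho_{\max}$ and $u\leq\frac{\theta}{A}\rho^2$ on $B_{2\rho}(x_0)\times[0,\tilde t]$, one has $u\equiv 0$ on $B_\rho(x_0)\times[0,\tilde t]$. I would prove this by contradiction: assuming $u(y,\tau)>0$ for some $(y,\tau)\in B_\rho(x_0)\times[0,\tilde t]$, compare $u$ with the barrier $\tilde u(x,t):=\frac{\theta}{A}|x-y|^2+\frac{\theta}{8}(\tau-t)$ on $B_{2\rho}(x_0)\times[0,\tau]\cap\{u>0\}$. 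Using that the assumed neighborhood of $\{u_0=0\}\times[0,T]$ in which $f\leq -\theta$ contains, by compactness, a tubular piece $\{u_0=0\}_{+\rho_0}\times[0,T]$ for some $\rho_0>0$, and choosing $\rho\leq \rho_0/2$ so that $B_{2\rho}(x_0)\times[0,T]$ lies in this tube, one computes $\partial_t\tilde u-\Delta\tilde u\geq -\theta/4\geq f$, while $\tilde u$ dominates $u$ on the parabolic boundary thanks to the smallness hypothesis on $\partial B_{2\rho}(x_0)\times[0,\tau]$, to $u_0=0$ on $B_{2\rho}(x_0)$, and to $u=0$ on the free boundary. The parabolic maximum principle then gives $u(y,\tau)\leq\tilde u(y,\tau)=0$, a contradiction.

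To close the argument, fix $\eta>0$, set $\rho=\min(\eta/2,\rho_0/2,\rho_{\max})$, and observe that every $x$ with $d(x,\{u_0>0\})>\eta\geq 2\rho$ satisfies $B_{2\rho}(x)\subset\{u_0=0\}$. Standard parabolic regularity for \eqref{fbpgR} (with bounded continuous right-hand side and continuous initial data) yields $u\in C(\mathcal U\times[0,T])$ with $u(\cdot,0)=u_0$, hence $\|u(\cdot,t)\|_{L^\infty(\mathcal U)}\to 0$ on $\{u_0=0\}$ uniformly as $t\searrow 0$, which supplies the smallness hypothesis of the nondegeneracy lemma on $[0,t^*(\eta)]$ for a $t^*(\eta)>0$ independent of $x$. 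Applying the lemma gives $u(x,t)=0$ for all $t\in[0,t^*(\eta)]$, proving the right inclusion. The \emph{main technical point} is the construction of the barrier on the curved manifold $\mathcal U$, where $\Delta|x-y|^2$ picks up mean-curvature corrections; this is dealt with exactly as in Lemma~\ref{L.usmall} by confining to balls of radius $\rho\leq\rho_{\max}(\mathcal U)$ and choosing $A$ large enough to absorb the curvature term. Otherwise the argument is essentially the classical one and is strictly simpler than the non-local case treated in Section~\ref{Continuity Section}.
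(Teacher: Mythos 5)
Your proposal is correct and takes essentially the approach the paper indicates: the paper dispatches this theorem with a one-line reference to adapting the arguments of \cite[Theorem~4.2]{BF76} and \cite[Theorem~3.2(ii)]{EK79}, and your two-step argument is a faithful, self-contained realization of exactly that. The left inclusion via the comparison $u(\cdot,t)\geq S(t)u_0-Mt$ is the same device the paper itself uses in Step~1 of the proof of Theorem~\ref{main.p3}, and the barrier $\tilde u(x,t)=\tfrac{\theta}{A}|x-y|^2+\tfrac{\theta}{8}(\tau-t)$ for the Brezis--Friedman nondegeneracy step is the paper's Lemma~\ref{L.usmall} applied directly to $u$ rather than to the Michaelis--Menten regularization $u_\eps$ --- precisely the simplification the classical (local, uniformly nondegenerate) setting permits, since here $f\leq-\theta$ is given a priori in a fixed tube around $\{u_0=0\}\times[0,T]$ and there is no Lagrange multiplier whose continuity must first be established. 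The only point worth flagging is that the closing step relies on $u\in C(\mathcal U\times[0,T])$ with $u(\cdot,0)=u_0$, which you correctly attribute to standard parabolic theory for the obstacle problem with continuous data; making that citation explicit would tighten the writeup, but it is not a gap.
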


One can prove Theorem \ref{StefanCont} adapting the arguments in the proofs of \cite[Theorem 4.2]{BF76} and \cite[Theorem 3.2 (ii)]{EK79} .

\begin{theorem}\label{StefanJump}
Suppose that $u$ is a solution to \eqref{fbpgR}  with initial data $u_0 \in C(\mathcal U)$.
The positivity set $\{u(\cdot,t)>0\}$ converges to the set $\{u_0>0\} \cup \{ f(\cdot,0)>0\}$ as $t \to 0^{+}$ in the sense that  for any $\eta>0$, there exists $t^*(\eta)>0$ such that
\begin{equation*}
\big (\{u_0>0 \}\cup \{ f(\cdot,0)>0\}\big)_{-\eta} \subset \{u(\cdot,t)>0\} \subset \big(\{u_0>0 \}\cup \{ f(\cdot,0)\geq 0\}\big)_{+\eta}
\end{equation*}
for all $0 \leq t \leq t^*(\eta)$.
\end{theorem}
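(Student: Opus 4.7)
The plan is to handle the two inclusions separately by means of local barrier arguments analogous to those developed in Section \ref{Continuity Section}, exploiting the fact that for the classical problem \eqref{fbpgR} the right-hand side $f$ is given and no nonlocal coupling is present. Throughout, the required uniformity in $x_0$ will come from compactness together with uniform continuity of $f$ and $u_0$.

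For the right inclusion $\{u(\cdot,t)>0\}\subset(\{u_0>0\}\cup\{f(\cdot,0)\geq 0\})_{+\eta}$, I would argue by contraposition: any point $x_0$ with $d(x_0,\{u_0>0\}\cup\{f(\cdot,0)\geq 0\})>\eta$ remains in the coincidence set for a short uniform time. Such an $x_0$ satisfies $u_0\equiv 0$ and $f(\cdot,0)<0$ on $B_{\eta/2}(x_0)$; by uniform continuity of $f$ there exist $\theta=\theta(\eta)>0$ and $\tau=\tau(\eta)>0$, independent of $x_0$, with $f\leq -\theta$ on $B_{\eta/2}(x_0)\times[0,\tau]$. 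A supersolution of exactly the form used in Lemma \ref{L.usmall}, namely
\begin{equation*}
\tilde u(x,t)=\tfrac{\theta}{A}|x-x_0|^2+\tfrac{\theta}{8}(\tau-t),
\end{equation*}
combined with the parabolic maximum principle applied on $(B_{\eta/2}(x_0)\cap\{u>0\})\times[0,\tau]$, where $u$ satisfies $\partial_tu-\Delta u=f\leq-\theta$, yields $u\equiv 0$ on $B_{\eta/4}(x_0)\times[0,t^*(\eta)]$. This step is essentially a localization of Theorem \ref{StefanCont}.

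For the left inclusion $(\{u_0>0\}\cup\{f(\cdot,0)>0\})_{-\eta}\subset\{u(\cdot,t)>0\}$, I would split into two cases. If $x_0\in\{u_0>0\}_{-\eta/2}$, then $u_0\geq\alpha>0$ on a neighborhood of $x_0$ by compactness and continuity, and uniform parabolic regularity applied to $\partial_tu-\Delta u=fH(u)$ with bounded right-hand side yields uniform continuity of $u$ at $t=0$; hence $u(\cdot,t)>0$ on this neighborhood for $t$ small. The delicate case is $x_0\in\{u_0=0\}\cap\{f(\cdot,0)>0\}_{-\eta/2}$, where one must show that $u$ leaves the coincidence set instantaneously. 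By uniform continuity there are $\delta,r,\tau>0$, uniform in $x_0$ on compact subsets of $\{f(\cdot,0)>0\}$, with $f\geq\delta$ on $B_r(x_0)\times[0,\tau]$. I construct a positive subsolution $v$ by solving the linear heat equation $\partial_tv-\Delta v=\delta$ on $B_r(x_0)\times(0,\tau)$ with zero initial and boundary data; the strong maximum principle yields $v>0$ in the interior with a quantitative lower bound $v(x_0,t)\geq\gamma(t)>0$.

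The main obstacle is then justifying $u\geq v$ on $B_r(x_0)\times[0,\tau]$. Using the $W^{2,p}$-regularity of obstacle solutions, $u$ satisfies $\partial_tu-\Delta u\geq f\geq\delta$ a.e.\ on $B_r(x_0)\times(0,\tau)$, so $\partial_t(v-u)-\Delta(v-u)\leq 0$ a.e. On the parabolic boundary, $v=0\leq u$ (using $u\geq 0$) and $v(\cdot,0)=0\leq u_0$. Testing with $w:=(v-u)_+\in L^2(0,\tau;H^1_0(B_r(x_0)))$ and integrating by parts gives
\begin{equation*}
\tfrac12\int_{B_r(x_0)}w^2(\cdot,\tau)\,dS+\int_0^{\tau}\!\!\int_{B_r(x_0)}|\nabla w|^2\,dS\,dt\leq 0,
\end{equation*}
whence $w\equiv 0$ and $u\geq v>0$ at $x_0$ for $t\in(0,\tau]$. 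Taking $t^*(\eta)$ as the minimum of the times obtained in the two inclusions, and using compactness of $(\{u_0>0\}\cup\{f(\cdot,0)>0\})_{-\eta}$ to make all constants uniform in $x_0$, completes the argument.
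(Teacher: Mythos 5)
Your proposal is correct, and it implements precisely the simplification that the paper alludes to when it says the proof ``will follow by simpler techniques'' than those of Theorem \ref{main.p3}: because $f$ is prescribed and does not couple back to the positivity set, you can argue locally with barriers and do not need the regularized initial data $u_n^0$ (whose role in Theorem \ref{main.p3} is to guarantee a priori strict positivity on the set where the right-hand side ought to be positive). Your right inclusion is a clean localization of the $\sqrt{t}$-type supersolution from Lemma \ref{L.usmall}; the missing ingredient there, namely the a priori smallness of $u$ on the lateral boundary $\partial B_{\eta/2}(x_0)\times[0,\tau]$, follows (as you implicitly use) from the uniform H\"older continuity of $u$ up to $t=0$ and $u_0\equiv 0$ on $B_{\eta/2}(x_0)$, with compactness making $\tau$ uniform in $x_0$.

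For the left inclusion your treatment of the delicate Case 2 differs from the paper's Step 2: where Theorem \ref{main.p3} relies on a parabolic Hopf-lemma contradiction to keep $u_n$ strictly positive in $A_\sigma$ (possible only because $u_n^0>0$ there), you instead observe directly that the complementarity condition $f\le 0$ a.e.\ in $\{u=0\}$ forces $\{u(\cdot,t)=0\}\cap B_r(x_0)$ to be null whenever $f\ge\delta>0$ on $B_r(x_0)\times(0,\tau)$, so that $\partial_t u-\Delta u=f\ge\delta$ holds a.e.\ there, and then an $L^2$-energy comparison with the explicit subsolution $v$ yields $u\ge v>0$. This exploits the given $f$ more directly and bypasses the regularization and limit passage in $n$ entirely; it is the natural argument when $f$ does not depend on $\{u>0\}$, and it is arguably cleaner than a literal transcription of the Hopf argument. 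One small remark: the resulting left inclusion holds for $0<t\le t^*(\eta)$ (not $t=0$), consistent with the formulation in Theorem \ref{main.p3}; the statement of Theorem \ref{StefanJump} as printed with ``$0\le t$'' is a harmless slip, since at $t=0$ the left inclusion can genuinely fail when there is a jump.
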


The proof of Theorem \ref{StefanJump} can be obtained arguing in a similar way as in the proof of Theorem \ref{main.p3}. Now the proof will follow by simpler techniques, since the dependence of the non local term $\alpha$ on the positivity set $\{u(\cdot,t)>0\}$ does not occur here.

\begin{remark_nn}
In particular, we notice that in the case of problem \eqref{fbpgR} the positivity set $\{ u(\cdot,t)>0\}$ as $t\to 0^+$ depends only on the set $\{u_0>0\}$ and the positivity set of $f(\cdot,t)$.
The strict inequality $f<0$ in a neighborhood of $\{u_0=0\}$ plays a similar role to the condition \eqref{eq:nondegeneracy1-lambda} for the problem \eqref{eq:oblam1}-\eqref{eq:oblam4}.
However, since $f(\cdot,t)$ in \eqref{fbpgR} does not depend on the positivity set $\{u(\cdot,t)>0\}$, we can omit \eqref{eq:nondegeneracy2} here.
We recall that imposing such a regularity condition  for the set $\{u_0>0\}$, was necessary in order to prove continuity properties for the function $\lambda$ (or equivalently $\alpha$ in \eqref{eq:obalph1}-\eqref{eq:obalph4}) that depends on the support of the solution $u$.
\end{remark_nn}

\begin{appendices}
\section{ On the $\pm \delta$-sets} \label{A1}

Throughout this section let $\Gamma \subset \mathbb R^3$ be a smooth compact surface without boundary.
In the formulation of the nondegeneracy condition for the initial data the following definition of inner and outer approximations where used.

\begin{definition} \label{app delta sets defn}
	For a Borel set $A\subset \Gamma$ and $\delta>0$ we set
	\begin{equation}
	A_{+\delta}:=\{x\;| d(x,A) \leq \delta \}\;, \quad A_{-\delta}:=\{x\;| d(x,A^{\mathsf{c}}) \geq \delta \}\;,
	\label{delta sets}
	\end{equation}
  where for a set $E\subset \Gamma$ the function $d(\cdot,E)$ denotes the associated distance function on $\Gamma$ induced by the Euclidean distance in the ambient space.
  In particular, $A_{\pm \delta}$ are subsets of $\Gamma$.
\end{definition}

% \begin{remark}\label{referee-req}
% We stress that $A_{\pm \delta}$ are always going to be understood as subsets of $\Gamma$.
% \end{remark}

First, we are going to justify some properties of the sets $A_{\pm \delta}$, which will be useful in our analysis.

\begin{lemma} \label{delta-sets-complement}
	\begin{enumerate}
		\item (Monotonicity) For any Borel set $A\subset\Gamma$ and $0<\delta\leq r$ we have
		\begin{equation*}
		A\subset A_{+\delta}\subset A_r \quad\text{ and }\quad
		A_{-r}\subset A_{-\delta}\subset A.
		\end{equation*}
		\item (Complements) For all $\delta>0$, the following inclusions hold true
		\begin{align}
		\cmp{(A_{+\delta})} &= (\cmp{A})_{-\delta-0} := \bigcup_{r>\delta}(\cmp{A})_{-r} \subset (\cmp{A})_{-\delta}\label{eq:cmp-1}\\
		\cmp{(A_{-\delta})} &= (\cmp{A})_{+\delta-0} := \bigcup_{r<\delta}(\cmp{A})_{+r} \subset (\cmp{A})_{+\delta}.\label{eq:cmp-2}
		\end{align}
		In particular, $(\cmp{A})_{+\frac{\delta}{2}}\subset \cmp{(A_{-\delta})}\subset (\cmp{A})_{+\delta}$ and $(\cmp{A})_{-2\delta}\subset \cmp{(A_{+\delta})}\subset (\cmp{A})_{-\delta}$.
	\end{enumerate}
\end{lemma}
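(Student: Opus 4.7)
The plan is to rewrite every set appearing in the statement as an explicit sublevel set of either $d(\cdot,A)$ or $d(\cdot,\cmp{A})$, and then to reduce every claimed inclusion or equality to a comparison of $\leq,<,\geq,>$ with the threshold $\delta$. Concretely, the identities
\begin{equation*}
  A_{+\delta}=\{x\in\Gamma\,:\,d(x,A)\leq\delta\},\qquad A_{-\delta}=\{x\in\Gamma\,:\,d(x,\cmp{A})\geq\delta\},
\end{equation*}
together with the analogous formulas for $\cmp{A}$ in place of $A$, namely
\begin{equation*}
  (\cmp{A})_{+r}=\{x\,:\,d(x,\cmp{A})\leq r\},\qquad (\cmp{A})_{-r}=\{x\,:\,d(x,A)\geq r\},
\end{equation*}
will do all of the work.

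For the monotonicity in part (1) I would simply note that for $x\in A$ one has $d(x,A)=0\leq\delta$, hence $A\subset A_{+\delta}$; the inclusion $A_{+\delta}\subset A_{+r}$ is the implication $d(x,A)\leq\delta\implies d(x,A)\leq r$. Symmetrically, $A_{-r}\subset A_{-\delta}$ follows from $d(x,\cmp{A})\geq r\implies d(x,\cmp{A})\geq\delta$, and $A_{-\delta}\subset A$ because if $x\in\cmp{A}$ then $d(x,\cmp{A})=0<\delta$, contradicting $x\in A_{-\delta}$.

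For part (2) I would compute
\begin{equation*}
  \cmp{(A_{+\delta})}=\{x\,:\,d(x,A)>\delta\}=\bigcup_{r>\delta}\{x\,:\,d(x,A)\geq r\}=\bigcup_{r>\delta}(\cmp{A})_{-r},
\end{equation*}
which gives the first equality in \eqref{eq:cmp-1}, and the containment in $(\cmp{A})_{-\delta}$ then follows from $d(x,A)>\delta\implies d(x,A)\geq\delta$. The second equality in \eqref{eq:cmp-2} is established in an entirely parallel way starting from $\cmp{(A_{-\delta})}=\{x\,:\,d(x,\cmp{A})<\delta\}$. The two "in particular" inclusions are immediate from the explicit level-set descriptions: $(\cmp{A})_{+\delta/2}=\{d(\cdot,\cmp{A})\leq\delta/2\}\subset\{d(\cdot,\cmp{A})<\delta\}=\cmp{(A_{-\delta})}\subset\{d(\cdot,\cmp{A})\leq\delta\}=(\cmp{A})_{+\delta}$, and analogously $(\cmp{A})_{-2\delta}=\{d(\cdot,A)\geq 2\delta\}\subset\{d(\cdot,A)>\delta\}=\cmp{(A_{+\delta})}\subset\{d(\cdot,A)\geq\delta\}=(\cmp{A})_{-\delta}$.

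I do not expect a real obstacle: the entire argument is bookkeeping based on the interplay between the definitions of $A_{\pm\delta}$ and the switch between strict and non-strict inequalities induced by taking complements. The only point to flag is the degenerate case $A=\Gamma$ or $A=\emptyset$, where one of the distance functions is conventionally $+\infty$; all displayed inclusions then collapse to tautologies, so no separate treatment is required.
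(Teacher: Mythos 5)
Your proof is correct and follows essentially the same route as the paper's: rewrite $A_{\pm\delta}$ and $(\cmp{A})_{\pm\delta}$ as sublevel/superlevel sets of the relevant distance functions and track strict versus non-strict inequalities under complementation. The paper's own proof is terser (it only writes out the chain for \eqref{eq:cmp-1} and delegates the rest to "arguing similarly"), but it is the same argument; your version merely spells out the monotonicity checks and the two ``in particular'' inclusions explicitly.
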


\begin{proof}
	\begin{enumerate}
		\item The monotonicity follows immediately using \eqref{delta sets}.
		\item Due to \eqref{delta sets}, we can write
		\begin{align*}
		(A_{+\delta})^{\mathsf{c}}=\big (\{x\;| d(x,A) \leq \delta \} \big )^{\mathsf{c}}= \{x\;| d(x,A) > \delta \} = \bigcup_{r>\delta} \{x\;| d(x,A) \geq r\} =\bigcup_{r>\delta} (\cmp{A})_{-r}
		\end{align*}
and by means of the monotonicity obtained in the first item, we deduce that
		\begin{equation*}
		(\cmp{A})_{-r}\subset (\cmp{A})_{-\delta}\;, \quad \text{ for all } \;r>\delta\;.
		\end{equation*}
This proves the first inclusion.
Arguing in a similar way, we prove also the second inclusion.
	\end{enumerate}

\end{proof}

The significance of the sets $A_{\pm \delta}\;$, as defined in \eqref{delta sets}, becomes evident in the following lemmas.

\begin{lemma}\label{pm delta union}
	For any $\delta>0$
	\begin{align*}
	\big (A_{-\delta} \cup C_{-\delta} \big) \subset \big (A\cup C \big)_{-\delta}&\subset \big ( A_{-\delta} \cup C_{-\delta} \big) \cup \big (  A_{+\delta} \setminus A_{-\delta}  \big)\,\\
	\big (A\cup C \big)_{+\delta}&= \big ( A_{+\delta} \cup C_{+\delta} \big).
	\end{align*}
\end{lemma}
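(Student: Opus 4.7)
The plan is to handle the two parts of the lemma separately, using the elementary identity $d(x,A\cup C)=\min(d(x,A),d(x,C))$ together with De Morgan's law $(A\cup C)^{\mathsf{c}}=\cmp{A}\cap\cmp{C}$. For the $+\delta$ equality this is essentially immediate: $d(x,A\cup C)\leq\delta$ is equivalent to $\min(d(x,A),d(x,C))\leq\delta$, which is the same as $x\in A_{+\delta}\cup C_{+\delta}$.

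For the $-\delta$ inclusions I would rewrite $(A\cup C)_{-\delta}=\{x\,:\, d(x,\cmp{A}\cap\cmp{C})\geq\delta\}$. The left inclusion is then immediate from $\cmp{A}\cap\cmp{C}\subset\cmp{A}$, which gives $d(x,\cmp{A}\cap\cmp{C})\geq d(x,\cmp{A})$ and hence $A_{-\delta}\subset (A\cup C)_{-\delta}$; the same argument with $\cmp{C}$ in place of $\cmp{A}$ handles $C_{-\delta}$.

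The right $-\delta$ inclusion is the only step with any content. Given $x\in (A\cup C)_{-\delta}$, I assume $x\notin A_{-\delta}\cup C_{-\delta}$, so in particular $d(x,\cmp{C})<\delta$, and I pick some $z\in\cmp{C}$ with $d(x,z)<\delta$. If this $z$ were in $\cmp{A}$, it would lie in $\cmp{A}\cap\cmp{C}$ and we would have $d(x,z)<\delta\leq d(x,\cmp{A}\cap\cmp{C})$, a contradiction. Hence $z\in A$, giving $d(x,A)\leq d(x,z)<\delta$ and thus $x\in A_{+\delta}$; combined with $x\notin A_{-\delta}$ this yields $x\in A_{+\delta}\setminus A_{-\delta}$.

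I do not foresee any real obstacle; the statement is essentially a pointset identity for the distance function. The mildly asymmetric form of the right-hand side in the $-\delta$ chain (only $A_{+\delta}\setminus A_{-\delta}$ appears) is cosmetic, since swapping the roles of $A$ and $C$ in the argument above would equally well produce $x\in C_{+\delta}\setminus C_{-\delta}$, so the asymmetric statement is indeed the weakest of the two symmetric conclusions.
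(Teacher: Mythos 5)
Your proof is correct and follows essentially the same approach as the paper: the $+\delta$ equality via $d(x,A\cup C)=\min(d(x,A),d(x,C))$, the left $-\delta$ inclusion directly from definitions, and for the right $-\delta$ inclusion the observation that a point of $(A\cup C)_{-\delta}$ lying outside both $A_{-\delta}$ and $C_{-\delta}$ must be within distance $\delta$ of $A$. The paper phrases this last step geometrically via the ball $B_\delta(x)\subset A\cup C$ meeting both $A$ and $C$, whereas you phrase it with distance-to-complement inequalities; these are the same argument, and your version is if anything slightly cleaner because it makes explicit the role of the hypothesis $x\notin A_{-\delta}\cup C_{-\delta}$.
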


\begin{proof}

	By definition, it follows easily that $\big (A_{-\delta} \cup C_{-\delta} \big) \subset \big (A\cup C \big)_{-\delta}$.
Hence, we write $\big (A\cup C \big)_{-\delta}=\big (A_{-\delta} \cup C_{-\delta} \big) \cup \big [ \big (A\cup C \big)_{-\delta} \setminus \big (A_{-\delta} \cup C_{-\delta} \big) \big]$ and we fix $x \in \big (A\cup C \big)_{-\delta}\;$.
We observe that there exists a ball $B_{\delta}(x) \subset A\cup C$ such that $B_{\delta}(x) \cap A \neq \emptyset $ and $B_{\delta}(x) \cap C \neq \emptyset \;$.
This in turn implies that $d(x,A) \leq \delta$ and in particular that $x\in A_{+\delta}\;$.
Therefore,
	\begin{align*}
	\big (A\cup C \big)_{-\delta} \setminus \big (A_{-\delta} \cup C_{-\delta} \big) \subset A_{+\delta} \setminus \big (A_{-\delta} \cup C_{-\delta} \big) \subset A_{+\delta} \setminus A_{-\delta} \;.
	\end{align*}
Moreover, for any $x \in A_{+\delta} \cup C_{+\delta}$ we have
	\begin{align*}
	x \in  A_{+\delta} \cup C_{+\delta} & \Leftrightarrow d(x,A) \leq \delta \quad \text{or} \quad d(x,C) \leq \delta  \\
	& \Leftrightarrow d(x,A\cup C) \leq \delta \\&\Leftrightarrow x\in \big (A\cup C \big)_{+\delta}\;.
	\end{align*}
\end{proof}

\begin{lemma}\label{-delta set}
	Let $h\in C(\Gamma)$ be given with $\{h>0\}$ and $\{h\leq 0\}$ both being non-empty.
	Then there exist positive non-decreasing functions $\delta_1,\delta_2:(0,1)\to\R$ with $\lim \limits_{r \to 0} \delta_1(r)=\lim \limits_{r \to 0} \delta_2(r)=0$ such that
	\begin{equation*}
	(\{ h>0 \})_{-\delta_1(r)} \subset \{h \geq r \} \subset (\{ h>0 \})_{-\delta_2(r)}\;.
	\end{equation*}
\end{lemma}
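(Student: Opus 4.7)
The plan is to prove each inclusion separately by analyzing the extremal behavior of the distance function $d(\cdot,B)$ on level sets of $h$, where $B:=\{h\leq 0\}$. Note that $B$ is compact (closed in the compact manifold $\Gamma$) and non-empty by hypothesis, so $d(\cdot,B)$ is a continuous function on $\Gamma$ taking only finite values. Both inclusions reduce, via taking contrapositives, to quantitative versions of the natural statement that large values of $h$ occur only at points that are far from $B$, and vice versa.

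For the right inclusion $\{h\geq r\}\subset(\{h>0\})_{-\delta_2(r)}$, I would set
\begin{equation*}
  \delta_2^*(r):=\inf\{d(x,B):h(x)\geq r\},
\end{equation*}
with the convention $\delta_2^*(r)=+\infty$ when the set is empty, and then define $\delta_2(r):=\min(r,\delta_2^*(r))$. Both $r\mapsto r$ and $r\mapsto \delta_2^*(r)$ are non-decreasing (for the latter, because the set $\{h\geq r\}$ shrinks with $r$), so their pointwise minimum is as well. Positivity $\delta_2(r)>0$ for $r>0$ follows by a compactness argument: if a minimizing sequence $x_n$ had $d(x_n,B)\to 0$ and $h(x_n)\geq r$, a convergent subsequence $x_n\to x_*$ would satisfy $h(x_*)\geq r>0$ and (by continuity of $d(\cdot,B)$) $x_*\in B$, a contradiction. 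Finally $\delta_2(r)\leq r\to 0$ as $r\searrow 0$, and $\delta_2\leq \delta_2^*$ directly implies the inclusion.

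For the left inclusion $(\{h>0\})_{-\delta_1(r)}\subset\{h\geq r\}$, I would introduce
\begin{equation*}
  F(r):=\sup\{d(x,B):h(x)<r\}
\end{equation*}
and set $\delta_1(r):=F(r)+r$. The function $F$ is non-decreasing (the sup is taken over an enlarging family of sets), hence so is $\delta_1$, and clearly $\delta_1(r)>0$. The key step is $F(r)\to 0$ as $r\searrow 0$: otherwise one could extract $r_n\searrow 0$ and $x_n$ with $h(x_n)<r_n$ and $d(x_n,B)\geq c>0$; compactness of $\Gamma$ yields a subsequence converging to some $x_*$ with $h(x_*)\leq 0$, hence $x_*\in B$, while continuity of $d(\cdot,B)$ forces $d(x_*,B)\geq c>0$, a contradiction. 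Consequently $\delta_1(r)\to 0$ as $r\searrow 0$. From the strict inequality $\delta_1(r)>F(r)$, any $x$ with $d(x,B)\geq \delta_1(r)$ cannot belong to $\{h<r\}$ (otherwise $d(x,B)\leq F(r)<\delta_1(r)$), so $h(x)\geq r$, which is the desired inclusion.

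There is no substantial obstacle here: the proof is a routine application of compactness together with the continuity of $h$ and of $d(\cdot,B)$. The only minor point to attend to is the construction of monotone moduli, which is handled by the explicit formulas $\delta_2=\min(r,\delta_2^*)$ and $\delta_1=F+r$; both preserve monotonicity in $r$ and the vanishing at $r=0$ while maintaining the defining inequalities.
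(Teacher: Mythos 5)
Your proof is correct, and your treatment of the left inclusion takes a genuinely different (and arguably cleaner) route than the paper's. For the right inclusion you use the same underlying quantity as the paper --- namely $\delta_2^*(r)=\inf\{d(x,B):h(x)\geq r\}=d\big(\{h\geq r\},\{h\leq 0\}\big)$ --- but you sidestep proving $\delta_2^*(r)\to 0$ by capping with $\min(r,\delta_2^*(r))$, whereas the paper establishes the limit directly by a contradiction argument using that $\partial\{h>0\}\subset\{h\leq 0\}$ is non-empty; both are valid, and your cap makes the vanishing automatic at the cost of possibly using a smaller (but still sufficient) $\delta_2$. For the left inclusion the paper introduces $m(\delta):=\min\{h(x):d(x,B)\geq\delta\}$, shows $m(\delta)\to 0$, and then takes a generalized inverse $\bar\delta_1(r):=\inf\{\delta:m(\delta)\geq r\}$ (adding $r$ to guarantee $m(\delta_1(r))\geq r$); you instead work directly with the dual quantity $F(r):=\sup\{d(x,B):h(x)<r\}$ and set $\delta_1=F+r$, avoiding the inversion step entirely. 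One small imprecision: in the contradiction argument for $F(r)\to 0$, the supremum defining $F(r_n)$ need not be attained, so you should select $x_n$ with $d(x_n,B)\geq c/2$ (say) rather than $\geq c$; the rest of the argument then goes through unchanged.
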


\begin{proof}
	We first prove the second inclusion.
We observe that for any $r>0$ the sets $\{ h \geq r\}$ and $\{h \leq 0 \}$ are compact and disjoint.
	Thus, setting
	\begin{equation*}
	\delta_2(r):=d\big(\{ h \geq r\},\{h \leq 0 \}\big)
	\end{equation*}
	yields a positive non-decreasing function.
	Moreover, by definition of $\delta_2$, in $\{ h \geq r\}$ we have $d(\cdot,\{h\leq 0\})\geq \delta_2(r)$, which implies $\{ h \geq r\}\subset (\{h> 0\})_{-\delta_2(r)}$.

	Since $\delta_2$ is non-decreasing $\omega:=\lim_{r\searrow 0}\delta_2(r)$ exists.
Assume $\omega>0$.
Then
	\begin{equation*}
	d\big(\cdot,\{h\leq 0\}\big)\geq \omega\quad\text{ in }\{h>0\}=\bigcup_{r>0}\{ h \geq r\},
	\end{equation*}
	which yields a contradiction since $h$ is continuous and $\partial\{h> 0\}\subset\{h\leq 0\}$ is non-empty.

	\bigskip
	To prove the first inclusion define for $\delta>0$ the function
	\begin{equation*}
	m(\delta) := \min \big\{h(x)\,:\, d(x,\{h\leq 0\})\geq\delta\big\}.
	\end{equation*}
	We observe that $m:(0,1)\to\R$ is well-defined, positive and non-decreasing.

	By definition we have
	\begin{equation}
	(\{h>0\})_{-\delta}\subset \{h\geq m(\delta)\}.
	\label{eqA:1001}
	\end{equation}

	Assume that $\omega:=\lim_{\delta\searrow 0}m(\delta)>0$.
Then by continuity of $h$
	\begin{equation*}
	h\geq \omega\quad\text{ in }\bigcup_{\delta>0}(\{h>0\})_{-\delta}=\big\{d(\cdot,\{h\leq 0\})>0\big\}=\{h>0\},
	\end{equation*}
	a contradiction.

	We next define
	\begin{equation}\label{intermed}
	\bar \delta_1(r) := \inf\{\delta>0\,:\, m(\delta)\geq r\}
	\end{equation}
and we notice that $\bar \delta_1$ is positive, non-decreasing and moreover $\lim \limits_{r \to 0} \bar \delta_1(r)=0$ since $\lim_{\delta\searrow 0}m(\delta)=0$.
Hence, if we set $\delta_1(r):=\bar \delta_1(r)+r$, we deduce by \eqref{intermed} and the monotonicity of $m$ that

	\begin{equation*}
	m(\delta_1(r)) \geq r\;,
	\end{equation*}
for $\delta_1(r) \to 0$ as $r\to 0$.
	Then we conclude from \eqref{eqA:1001} that
	\begin{equation*}
	(\{h>0\})_{-\delta_1(r)}\subset \{h\geq m(\delta_1(r))\}\subset \{h\geq r\}.
	\end{equation*}

\end{proof}

\section{On the regularity of sets}\label{A2}

\begin{definition}\label{app regularity defn}
Assume that $\Gamma \subset \mathbb R^3$ is a smooth surface.
We will call a Borel set $A \subset \Gamma$ regular if
\begin{equation}\label{newremedy'}
\vert \partial A \vert=0\;,
\end{equation}
where $\vert \cdot \vert=\mathcal H^2$ denotes the two-dimensional Hausdorff measure.
\end{definition}

To begin with, we prove an equivalent characterization for \eqref{newremedy'}.

\begin{lemma}\label{non-fat}
For some Borel set $A\subset \Gamma$ and $\delta>0$, we consider the sets $A_{\pm \delta}$ as these are given by \eqref{delta sets}.
Then, the set $A$ is regular iff
\begin{equation*}
  |A_{+\delta} \setminus A_{-\delta}| \to 0\;\quad \text{as } \delta \to 0 \;.
\end{equation*}
Moreover, we have that
\begin{align}
  \lim_{\delta\searrow 0}|A_{+\delta} \setminus A| &\searrow 0 \quad\text{ for all }A\subset\Gamma\text{ closed,}
  \label{eq:B2-closed}\\
  \lim_{\delta\searrow 0}|A \setminus A_{-\delta}| &\searrow 0 \quad\text{ for all }A\subset\Gamma\text{ open.}
  \label{eq:B2-open}
\end{align}
\end{lemma}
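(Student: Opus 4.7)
The plan is to reduce the claim to the monotone continuity of the Hausdorff measure $|\cdot|=\mathcal H^2$ on $\Gamma$ and identify the limiting sets explicitly via the distance functions $d(\cdot,A)$ and $d(\cdot,A^c)$. Throughout, I will use that $\Gamma$ is compact, so that $|\Gamma|<\infty$ and monotone sequences of measurable sets have measures that converge to the measure of the limit.

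First I would observe the monotonicity in $\delta$: as $\delta\searrow 0$ the family $A_{+\delta}$ is decreasing and $A_{-\delta}$ is increasing, so $A_{+\delta}\setminus A_{-\delta}$ is decreasing. Since $d(\cdot,A)$ and $d(\cdot,A^c)$ are continuous, elementary calculations give
\begin{equation*}
  \bigcap_{\delta>0} A_{+\delta} = \{d(\cdot,A)=0\} = \overline A,
  \qquad
  \bigcup_{\delta>0} A_{-\delta} = \{d(\cdot,A^c)>0\} = \interior{A},
\end{equation*}
and hence
\begin{equation*}
  \bigcap_{\delta>0}\bigl(A_{+\delta}\setminus A_{-\delta}\bigr)
  = \overline A\setminus \interior{A} = \partial A.
\end{equation*}
By continuity of the finite measure $|\cdot|$ along decreasing sequences, this yields
\begin{equation*}
  \lim_{\delta\searrow 0}\bigl|A_{+\delta}\setminus A_{-\delta}\bigr| = |\partial A|,
\end{equation*}
from which the equivalence between $|\partial A|=0$ and $|A_{+\delta}\setminus A_{-\delta}|\to 0$ follows at once.

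For the two remaining statements I would run the same monotone-continuity argument on a single side. When $A$ is closed, $\bigcap_{\delta>0}A_{+\delta}=\overline A=A$, so $A_{+\delta}\setminus A$ decreases to the empty set and $|A_{+\delta}\setminus A|\searrow 0$, giving \eqref{eq:B2-closed}. When $A$ is open, $\bigcup_{\delta>0}A_{-\delta}=\interior A=A$, so $A\setminus A_{-\delta}$ decreases to the empty set and $|A\setminus A_{-\delta}|\searrow 0$, giving \eqref{eq:B2-open}.

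There is no genuine obstacle here; the only subtle point is to make sure the identifications $\bigcap_\delta A_{+\delta}=\overline A$ and $\bigcup_\delta A_{-\delta}=\interior A$ are used correctly, since these are what convert the set-theoretic limit into the topological boundary $\partial A$ and what make the monotonicity of $|\cdot|$ applicable on the compact surface $\Gamma$.
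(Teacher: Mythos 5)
Your proof is correct and follows the same route as the paper's: identify $\bigcap_{\delta>0}A_{+\delta}=\overline A$ and $\bigcup_{\delta>0}A_{-\delta}=\interior{A}$, then use monotone continuity of the finite measure $\calH^2$ on the compact $\Gamma$ to pass to the limits, obtaining $|\partial A|$ for the symmetric difference and the emptiness of the one-sided limits when $A$ is closed (resp.\ open). The only cosmetic difference is that the paper first records the one-sided limits $\lim_\delta|A_{+\delta}\setminus A|=|\overline A\setminus A|$ and $\lim_\delta|A\setminus A_{-\delta}|=|A\setminus\interior{A}|$ for general Borel $A$ and specializes afterward, whereas you specialize immediately; this changes nothing of substance.
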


\begin{proof}
By the monotonicity properties stated in Lemma \ref{delta-sets-complement}, for $\delta_1>\delta_2$ it holds that $A_{+\delta}$ is decreasing and $A_{-\delta}$ is increasing as $\delta \searrow 0$.
Moreover,
\begin{equation*}
  \bigcap \limits_{\delta>0}A_{+\delta}=\{x\;| d(x,A) =0 \} = \overline{A}
  \quad\text{ and }\quad
  \bigcup \limits_{\delta>0}A_{-\delta}=\{x\;| d(x,A^c) >0 \} = \mathring{A}.
\end{equation*}
This implies
\begin{align}\label{fact1a}
  \lim \limits_{\delta \searrow 0} \vert A_{+\delta} \setminus A \vert
  &=\big \vert \bigcap \limits_{\delta>0}A_{+\delta} \setminus A\big \vert
  = |\overline{A}\setminus A|\;,\\
  \label{fact1b}
  \lim \limits_{\delta \searrow 0} \vert A\setminus A_{-\delta} \vert
  &=\big \vert \bigcap \limits_{\delta>0}A\setminus A_{-\delta}\big \vert
  = |A\setminus\mathring{A}|\;,
\end{align}
and
\begin{equation*}
  \lim \limits_{\delta \searrow 0} |A_{+\delta} \setminus A_{-\delta}|
  =|\bigcap \limits_{\delta>0}\big(A_{+\delta} \setminus A_{-\delta}\big)|
  =|\overline{A}\setminus\mathring{A}|=|\partial A|\;.
\end{equation*}
Therefore, if the set $A$ is regular according to Definition \ref{app regularity defn}, then $\lim \limits_{\delta \searrow 0} \vert A_{+\delta} \setminus A_{-\delta} \vert =0$ and vice versa.

The other claims follow by \eqref{fact1a}, \eqref{fact1b}.
\end{proof}

Next, we will show that the union of two regular sets is also regular.

\begin{lemma}\label{delta union regular}
Suppose that both $A$ and $C$ are regular.
Then, the union $A\cup C$ is  also regular in terms of Definition \ref{app regularity defn}.
\end{lemma}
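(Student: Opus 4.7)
The plan is to use the characterization of regularity provided by Lemma \ref{non-fat}, which converts the measure-theoretic condition $|\partial A|=0$ into the approximation condition $|A_{+\delta}\setminus A_{-\delta}|\to 0$, together with the set-theoretic identities for $\pm\delta$-sets of unions established in Lemma \ref{pm delta union}. This lets one reduce the regularity of $A\cup C$ to the regularity of $A$ and of $C$ separately.

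More precisely, the first step is to apply Lemma \ref{non-fat} to $A$ and to $C$ to obtain
\begin{equation*}
  |A_{+\delta}\setminus A_{-\delta}|\to 0 \quad\text{and}\quad |C_{+\delta}\setminus C_{-\delta}|\to 0\quad\text{as }\delta\to 0.
\end{equation*}
The second step is to invoke Lemma \ref{pm delta union}, which gives the exact identity $(A\cup C)_{+\delta}=A_{+\delta}\cup C_{+\delta}$ and the inclusion $A_{-\delta}\cup C_{-\delta}\subset (A\cup C)_{-\delta}$. Combining these yields
\begin{equation*}
  (A\cup C)_{+\delta}\setminus (A\cup C)_{-\delta}
  \subset (A_{+\delta}\cup C_{+\delta})\setminus (A_{-\delta}\cup C_{-\delta})
  \subset (A_{+\delta}\setminus A_{-\delta})\cup (C_{+\delta}\setminus C_{-\delta}),
\end{equation*}
where the last inclusion follows from the elementary fact that any point in the left-hand side lies in $A_{+\delta}$ or $C_{+\delta}$ while being missing from both $A_{-\delta}$ and $C_{-\delta}$.

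The third and final step is to take the two-dimensional Hausdorff measure of the inclusion above, use subadditivity, and pass to the limit $\delta\to 0$ to conclude that $|(A\cup C)_{+\delta}\setminus (A\cup C)_{-\delta}|\to 0$, which by Lemma \ref{non-fat} is equivalent to $|\partial(A\cup C)|=0$, i.e., regularity of $A\cup C$. There is essentially no obstacle here: the argument is purely a bookkeeping exercise combining the two preceding appendix lemmas, and the only point requiring care is verifying the set-theoretic inclusion in the second step, which is straightforward.
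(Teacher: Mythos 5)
Your argument is correct: the set-theoretic inclusion in your second step checks out (any $x$ in $(A_{+\delta}\cup C_{+\delta})\setminus(A_{-\delta}\cup C_{-\delta})$ lies in $A_{+\delta}$ or $C_{+\delta}$ while avoiding both $A_{-\delta}$ and $C_{-\delta}$, hence lies in $A_{+\delta}\setminus A_{-\delta}$ or $C_{+\delta}\setminus C_{-\delta}$), and subadditivity of $\calH^2$ finishes the job. However, you take a more circuitous route than the paper does. The paper's proof is a two-line argument working directly with Definition \ref{app regularity defn}: it uses the elementary topological inclusion $\partial(A\cup C)\subset \partial A\cup \partial C$, whence $|\partial(A\cup C)|\leq|\partial A|+|\partial C|=0$. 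Your proof instead translates everything through the equivalent $\pm\delta$-set characterization from Lemma \ref{non-fat} and the union identities from Lemma \ref{pm delta union}. This is valid, but note that the proof of Lemma \ref{non-fat} itself rests on the identity $\lim_{\delta\searrow 0}|A_{+\delta}\setminus A_{-\delta}|=|\partial A|$, so you are essentially unwinding and re-winding that equivalence around the same underlying topological fact. The $\delta$-set formulation is valuable when one does not have direct access to the boundary (as in the dynamic estimates in Section \ref{Jump Section}), but for this particular lemma the direct argument via boundaries is shorter and avoids invoking two auxiliary lemmas.
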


\begin{proof}
It holds that $\partial  (A \cup C) \subset \partial A \cup \partial C$.
This in particular implies that
\begin{equation*}
\vert \partial  (A \cup C) \vert \leq \vert \partial A \cup \partial C \vert \leq \vert \partial A \vert + \vert \partial C \vert \;.
\end{equation*}
Using Definition \ref{app regularity defn} the claim follows immediately.

\end{proof}

\section{Construction of suitable initial data}\label{A:initial_data}
In Lemma \ref{eta data regular} we have claimed that we can construct initial data that enjoy suitable convergence, positivity and monotonicity properties.
In this section, we provide a proof of the lemma.

\begin{proof}[Proof of Lemma \ref{eta data regular}]
Let $(\gamma_n)_n$ be a positive, non increasing sequence with $2\gamma_{n+1}<\gamma_n$ for all $n\in \mathbb N$.
In particular, $\gamma_n\searrow 0$ as $n\uparrow\infty$.
We are going to construct $u^0_n$ as follows.

\medskip
We write $\Gamma=\{ g \leq  \Lambda[u_0]-2{\gamma_n}\} \cup \{\Lambda[u_0]-2{\gamma_n} < g < \Lambda[u_0]-{\gamma_n} \} \cup \{ g\geq \Lambda[u_0]-{\gamma_n}\}$.
Since $\Gamma$ is smooth and $g\in C^2(\Gamma)$, by the Morse-Sard Theorem \cite[Theorem 3.1.3]{Hirs76} almost all $r\in g(\Gamma)$ are regular values of $g$.
Moreover, for all regular values $r$ the sets $\{g=r\}$ are one-dimensional $C^2$-submanifolds of $\Gamma$.
In particular, for such values the sets $\{g>r\}$ are regular in the sense of  Definition \ref{app regularity defn}.

Hence, for all $n\in\N$ we can fix a regular value
\begin{equation}
  r_n \in [\Lambda[u_0]-2{\gamma_n},\Lambda[u_0]-{\gamma_n}].
  \label{eq:rn}
\end{equation}
This choice and the fact that $2\gamma_{n+1}<\gamma_n$ for all $n\in \mathbb N$ imply that $(r_n)_n$ is a strictly monotone increasing sequence.

\medskip
Since $g$ is continuous and $(r_n)_n$ is strictly monotone increasing, the sets $\{g>r_n\}$ are open and satisfy $\{g>r_{n+1}\}\ssubset \{g>r_n\}$.
Therefore, applying Lemma \ref{ind claim} below, we obtain that for all $n\in\N$ there exists $\zeta_n \in C^{\infty}(\Gamma)$ such that
\begin{equation}\label{zeta defn}
  \zeta_n=1\;\; \text{in } \{g>r_{n+1}\}, \quad \zeta_n \in (0,1] \;\;\text{in } \{r_n <g \leq r_{n+1} \}, \quad \zeta_n=0 \;\; \text{in } \{g\leq r_n\}.
\end{equation}
At this point, we define $u^0_n$ as
\begin{equation}\label{eta data construct}
  u^0_n=u_0+{\gamma_n} \zeta_n\;.
\end{equation}

By \eqref{eta data construct} it follows immediately that $u^0_n \geq u_0$ for all $n$.
Since $\gamma_n\searrow 0$ and by \eqref{zeta defn}, we have that $\gamma_{n+1} \leq \gamma_n$ and further that $\zeta_{n+1} \leq 1=\zeta_n$ in $\{g>r_{n+1}\}$.
Hence, we obtain that $u^0_{n+1} \leq u^0_n$ and item \ref{first} is justified. Item \eqref{nine} is a direct consequence of \eqref{eta data construct}, $0\leq\zeta_n\leq 1$ and $\gamma_n\searrow 0$.

We easily observe that item \eqref{second} follows from item \eqref{first}, and that item \eqref{third} and \eqref{fourth} in Lemma \ref{eta data regular} follow by \eqref{zeta defn} and \eqref{eta data construct}.
Moreover since $\{ u^0_n>0\}=\{ u_0>0\}\cup \{ \zeta_n>0\}$, we derive \eqref{sixth} by means of Lemma \ref{delta union regular}.
In order to deduce \eqref{fifth}, we combine \eqref{fourth} with Lebesgue's dominated convergence theorem.
Then
\begin{equation*}
 \vert \{u^0_n>0 \} \cap \{u_0=0 \} \cap \{g<\Lambda[u_0] \} \vert \leq \vert \{\Lambda[u_0]-2{\gamma_n}<g<\Lambda[u_0] \} \vert \to 0
\end{equation*}
as $n \to \infty$.

\bigskip

At this point we prove item \ref{eighth}.
We recall due to \eqref{eq:lambdas-ineq} and the fact that $\{u_0>0\} \subset \{u^0_n>0 \}$ that $\lambda^0_n \leq \Lambda[u_0]$.
In particular we have that
\begin{align}\label{calcStarnew}
  \lambda^0_n=\frac{1}{\vert A^0_* \vert +\vert  \{u_0=0\} \cap \{r_n<g<\Lambda[u_0] \}\vert}\bigg(\int \limits_{A^0_*}g\;dS+\int \limits_{\{u_0=0\}\cap \{r_n<g<\Lambda[u_0] \}} g\;dS \bigg).
\end{align}
For the sake of simplicity we set $B:=\{u_0=0\}\cap \{r_n<g<\Lambda[u_0] \}$.
By means of Lemma \ref{well-posed}, \eqref{calcStarnew} implies
\begin{align}\label{calcStarfinal}
  \lambda^0_n&=\frac{1}{\vert A^0_* \vert+\vert B \vert} \bigg(\Lambda[u_0]\vert A^0_* \vert +\int \limits_{\{u_0=0\}\cap \{r_n<g<\Lambda[u_0] \}} g\;dS\bigg) \notag \\&=\frac{1}{\vert A^0_* \vert+\vert B \vert}\bigg(\Lambda[u_0]\vert A^0_* \vert +\Lambda[u_0]\vert B \vert +\int \limits_{B} \big(g-\Lambda[u_0] \big)\;dS \bigg) \notag \\&=\Lambda[u_0]+\frac{1}{\vert A^0_* \vert+\vert B \vert}\int \limits_{B} \big(g-\Lambda[u_0] \big)\;dS\;.
\end{align}
Employing once again \eqref{eq:lambdas-ineq} and the fact that $r_n \in [\Lambda[u_0]-2{\gamma_n},\Lambda[u_0]-{\gamma_n}]$, we obtain that
\begin{equation*}
  0\leq \Lambda[u_0] -\lambda^0_n <\frac{2{\gamma_n} \vert B \vert}{\vert A^0_*\vert+\vert B \vert}\;.
\end{equation*}
Furthermore, since $B=\{u^0_n>0\}\cap \{u_0=0\}\cap \{g<\Lambda[u_0] \}$ we deduce by item \ref{fifth} and $|A_*^0|\geq |\{u_0>0\}|>0$ that $\vert B \vert \to 0 $ as $n\to \infty$.
This in particular yields that there exists $n^*\in\N$ such that
\begin{equation*}
\vert \lambda^0_n-\Lambda[u_0] \vert < \frac{{\gamma_n}}{4}, \quad \text{for all }n> n^*\;.
\end{equation*}
We can assume that $n^*=1$, otherwise we pass to the sequences $(\gamma_{n+n^*})_{n\in\N}$, $(u_{n+n^*}^0)_{n\in\N}$.
Therefore, the proof of item \ref{eighth} is complete.

\medskip
It remains to prove item \ref{seventh} of the lemma.
First we show the right-hand side inclusion in item.
To this end, we notice that due to item \ref{fourth} it holds
\begin{equation*}
  \{u^0_n>0 \} \subset  \{g>\Lambda[u_0]-2{\gamma_n}\} \cup \{u_0>0\}.
\end{equation*}
Furthermore, for any $\eta>0$ we have that $\{u_0>0 \} \subset \big (\{u_0>0\}\big )_{+\eta}$.
We claim that it suffices to show that there exists  $n^*=n^*(\eta)$ with $n^*(\eta) \to 0$ as $\eta \to 0$ such that
\begin{equation}\label{ess calc}
  \{g>\Lambda[u_0]-2\gamma_{n^*}\} \subset \big(\{g \geq \Lambda[u_0] \} \big)_{+\eta}\;.
\end{equation}
Indeed, if \eqref{ess calc} holds, then by virtue of Lemma \ref{pm delta union} and the monotonicity property in item \ref{second} we conclude that
\begin{align*}
  \{u^0_n>0 \}\subset\{u^0_{n^*}>0 \} \subset \big (\{u_0>0\}\big )_{+\eta} \cup \big(\{g \geq \Lambda[u_0] \} \big)_{+\eta}=\big (  \{u_0>0\} \cup \{g \geq \Lambda[u_0] \} \big)_{+\eta}\;
\end{align*}
for any $n \geq n^*$.
In order to show \eqref{ess calc}, we recall Lemma \ref{delta-sets-complement} and Lemma \ref{-delta set}.
More precisely, we compute
\begin{align*}
  \big(\{g \geq \Lambda[u_0] \} \big)_{+\eta}=\bigg( \big(\{g < \Lambda[u_0] \} \big)^{\mathsf{c}} \bigg)_{+\eta} \supset \bigg ( \big(\{g < \Lambda[u_0] \} \big)_{-\eta} \bigg )^\mathsf{c} \supset \bigg( \{g \leq \Lambda[u_0] -2\gamma_{n^*} \} \bigg)^\mathsf{c}
\end{align*}
for some $n^*(\eta)>0$ such that $n^*(\eta) \to \infty$ as $\eta \to 0$.

The proof is complete.

\end{proof}

We now prove the claim in \eqref{claim}.

\begin{lemma}\label{ind claim}
For any two open sets $U_1 \ssubset U_2\subset \Gamma$, there exists $\zeta \in C^{\infty}(\Gamma)$ such that
\begin{equation}\label{claim}
  \zeta=1 \,\text{ in }U_1,\quad
  \zeta>0 \,\text{ in }U_2, \quad
  \zeta=0 \,\text{ in }\Gamma \setminus U_2,\quad
  0\leq\zeta\leq 1\,\text{ in }\Gamma\;.
\end{equation}
\end{lemma}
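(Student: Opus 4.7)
The construction splits naturally into two independent smooth functions which are then combined.

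\emph{Step 1 (standard cut-off).} Since $\overline{U_1}\subset U_2$ with $\overline{U_1}$ compact, a standard partition-of-unity argument on the smooth compact manifold $\Gamma$ yields $\varphi\in C^\infty(\Gamma)$ with $0\leq \varphi\leq 1$, $\varphi\equiv 1$ on $\overline{U_1}$, and $\mathrm{supp}(\varphi)\subset U_2$. Concretely: choose finitely many coordinate balls covering $\overline{U_1}$ and contained in $U_2$, take a smooth bump on each, sum, and truncate.

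\emph{Step 2 (smooth function vanishing exactly on $\Gamma\setminus U_2$).} I would construct $\psi\in C^\infty(\Gamma)$ with $0\leq \psi\leq 1$, $\psi>0$ on $U_2$, and $\psi\equiv 0$ on $\Gamma\setminus U_2$. Write $U_2$ as a countable union $U_2=\bigcup_{i\in\N} V_i$ where each $V_i$ is an open coordinate ball with $\overline{V_i}\subset U_2$. For each $i$ choose $\psi_i\in C^\infty_c(\Gamma)$ with $\mathrm{supp}(\psi_i)\subset U_2$, $0\leq\psi_i\leq 1$, and $\psi_i>0$ on $V_i$. Setting
\begin{equation*}
  \psi := \sum_{i=1}^\infty c_i\,\psi_i, \qquad c_i:=2^{-i}\bigl(1+\|\psi_i\|_{C^i(\Gamma)}\bigr)^{-1},
\end{equation*}
the series converges in every $C^k(\Gamma)$ since the tail from index $i\geq k$ is dominated by a convergent geometric series in $C^k$-norm; hence $\psi\in C^\infty(\Gamma)$. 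Every $x\in U_2$ lies in some $V_i$, so $\psi(x)\geq c_i\psi_i(x)>0$; every $x\notin U_2$ has $\psi_i(x)=0$ for all $i$. After normalising by a positive constant we may assume $0\leq\psi\leq 1$. This step is the only non-routine one; it is essentially the classical fact that every closed subset of a smooth manifold is the zero set of a smooth non-negative function.

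\emph{Step 3 (combine).} Define
\begin{equation*}
  \zeta := 1-(1-\varphi)(1-\psi).
\end{equation*}
Since $(1-\varphi),(1-\psi)\in[0,1]$, their product lies in $[0,1]$, giving $0\leq\zeta\leq 1$. On $\overline{U_1}$ we have $\varphi=1$, hence $\zeta=1$. On $\Gamma\setminus U_2$ both $\varphi=0$ and $\psi=0$, so $\zeta=0$. Finally, for $x\in U_2$: if $\varphi(x)>0$ then $\zeta(x)\geq \varphi(x)>0$, while if $\varphi(x)=0$ then $\zeta(x)=\psi(x)>0$ by Step 2. This delivers all four properties in \eqref{claim}.

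\emph{Main obstacle.} The only non-trivial ingredient is Step 2: producing a globally smooth function on $\Gamma$ that is strictly positive on $U_2$ and vanishes identically on $\Gamma\setminus U_2$ (with no regularity assumed on $\partial U_2$). The weighted series construction circumvents this by trading an infinite sum against a rapidly decaying weight chosen to control each successive $C^k$-norm.
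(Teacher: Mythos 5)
Your proof is correct and follows essentially the same route as the paper: a standard cut-off equal to $1$ near $\overline{U_1}$, a weighted countable sum of bump functions giving a smooth $\psi>0$ exactly on $U_2$, and then a combination which, expanding $1-(1-\varphi)(1-\psi)=\varphi+\psi-\varphi\psi$, is algebraically identical to the paper's $(1-\vartheta)\psi+\vartheta$. The only cosmetic difference is in how the countable family is indexed (coordinate balls vs.\ a dense sequence of centers with radii $\tfrac12 d(x_j,\Gamma\setminus U_2)$); the $C^k$-convergence argument via decaying coefficients is the same.
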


\begin{proof}
In a first step we construct $\psi\in C^\infty(\Gamma)$ with $\psi>0$ in $U_2$ and $\psi=0$ in $\Gamma\setminus U_2$.

We choose a sequence $(x_j)_j$ in $U_2$ such that $\{x_j\,:\,j\in\N\}$ is dense in $U_2$ and set $\rho_j:=\frac{1}{2}d(x_j,\Gamma\setminus U_2)>0$.

Next, we fix a nonnegative function $\phi \in C^\infty(\R^3)$ that vanishes outside the unit ball $B_1(0)$ and satisfies $0\leq\phi\leq 1$ in $\R^3$ and $\phi>0$ in $B_1(0)$.
We define $\psi:\Gamma\to\R$ by
\begin{equation*}
  \psi(x):=\sum \limits_{j\in\N} 2^{-j} c_j\phi \bigg(\frac{x-x_j}{\rho_j} \bigg),
\end{equation*}
where $c_j>0$ is chosen such that
\begin{equation*}
  \big\|\partial^\alpha \phi \bigg(\frac{\cdot-x_j}{\rho_j} \bigg)\big\|_{C^0(\Gamma)} \leq \frac{1}{c_j}
\end{equation*}
for all (covariant) partial derivatives of order $|\alpha|\leq j$.

We observe that $0\leq \psi\leq 1$ is well-defined and smooth with $\psi=0$ outside $U_2$.
Next, we claim that $\psi>0$ in $U_2$.

In fact, for all $x\in U_2$, there exists $(x_{j(k)})_k$ with $x_{j(k)}\,\to\, x \,\; \text{ as }\; k\to\infty$.
This implies
\begin{equation*}
  \lim_{k\to\infty}\rho_{j(k)}= \frac{1}{2}\lim_{k\to\infty} d(x_{j(k)},\Gamma\setminus U_2)=\frac{1}{2} d(x,\Gamma\setminus U_2)>0.
\end{equation*}
Therefore $x\in B(x_{j(k)},\rho_{j(k)})$ for $k$ sufficiently large, and hence $\psi(x)>0$.

\medskip
Next, we choose $U_1\ssubset V\ssubset U_2$ and a bump function $\vartheta\in C^\infty(\Gamma)$, $0\leq \vartheta\leq 1$ with $\vartheta=0$ outside $V$ and $\vartheta=1$ in $U_1$.
Finally, we set
\begin{equation*}
  \zeta = (1-\vartheta)\psi + \vartheta
\end{equation*}
and observe that $\zeta=1$ in $U_1$, that $\zeta=\psi=0$ outside $U_2$ and that $\zeta\geq \psi>0$ in $U_2$, and that $\zeta\leq \max\{\psi,1\}=1$.
\end{proof}

\end{appendices}

%\bibliographystyle{plain}

%\bibliography{LNRV22}

\end{document}